\documentclass{article}
\usepackage{graphicx}

\usepackage{setspace}

\usepackage{amsmath}
\usepackage{amssymb}
\usepackage{amsthm}
\usepackage{mathtools}
\usepackage{empheq}
\usepackage{enumitem}
\usepackage{titlesec}
\usepackage{graphicx}
\usepackage{caption}
\usepackage{subcaption}
\usepackage{diagbox}
\usepackage{hyperref}

\usepackage[T1]{fontenc}
\usepackage[utf8]{inputenc}
\usepackage{comment}
\usepackage{indentfirst}
\usepackage[top=1.25in,left=1.25in,right=1.25in]{geometry}

\numberwithin{equation}{section}

\title{\Large{\uppercase{\bf The Facial Common Neighbourhood Graph}}}
\author{\Large{Riccardo W. Maffucci}}
\date{}
\newcommand{\Addresses}{  
		R.W.~Maffucci, \textsc{Dipartimento di Matematica, Universit\`a di Torino\\\indent Via Carlo Alberto 10, Turin 10123, Italy}\par\nopagebreak\vspace{-0.35cm}
		\textit{E-mail address}, R.W.~Maffucci: \href{mailto:riccardowm@hotmail.com}{\texttt{riccardowm@hotmail.com}}
  }

\def\co{\mathcal{O}}
\def\calr{\mathcal{R}}
\def\ct{\mathcal{T}}

\def\cg{\text{con}}
\def\od{\text{odd}}

\def\inte{\text{int}}

\def\fcg{\text{facecon}}

\def\dist{\text{dist}}

\def\om{\omega}

\newtheorem{thm}{Theorem}[section]
\newtheorem{lemma}[thm]{Lemma}
\newtheorem{prop}[thm]{Proposition}
\newtheorem{cor}[thm]{Corollary}
\newtheorem{defin}[thm]{Definition}
\newtheorem{rem}[thm]{Remark}

\begin{document}
\titleformat{\section}
  {\Large\scshape}{\thesection}{1em}{}
\titleformat{\subsection}
  {\large\scshape}{\thesubsection}{1em}{}
\maketitle
\Addresses

\begin{abstract}
Given a polyhedron (planar, $3$-connected graph) $G$, we investigate its common neighbourhood graph $\cg(G)$. For cubic ($3$-regular) polyhedra, we show that the planarity of $\cg(G)$ depends on the number of odd faces of $G$, and on their adjacency. We then prove that for all other polyhedra, $\cg(G)$ is non-planar.

We introduce a novel concept for polyhedra (and more generally, for plane graphs) $G$, namely the `facial common neighbourhood graph' $\fcg(G)$. Its definition takes into account pairs of vertices with a common neighbour on the same face of $G$. It is a spanning subgraph of $\cg(G)$, that coincides with $\cg(G)$ for cubic polyhedra. It also generalises the reverse construction of the radial graph.

As part of our investigation, we also prove a technical result of independent interest: if a maximal planar graph (triangulation of the sphere) has exactly two vertices of odd degree, then they are not adjacent.

We also answer several questions in extremal graph theory. Fixing the number of vertices, we characterise the polyhedra $G$ such that $\cg(G)$ is planar and the number of edges in $\cg(G)$ is minimal/maximal. We address the same problem for $\fcg(G)$, and prove that if it is maximal planar, then $G$ has no face of length greater than $6$. We notably characterise and explicitly construct all polyhedra $G$ of maximal face length $4$ such that $\fcg(G)$ is maximal planar.
\end{abstract}
{\bf Keywords:} Common neighbourhood graph, Congraph, Cubic graph, Planar graph, Maximal planar graph, Extremal graph theory, Polyhedron,   Connectivity, Odd vertices, Odd faces, Facial common neighbourhood graph, Facecongraph.
\\
{\bf MSC(2020):} 05C75, 05C76, 05C10, 05C35, 05C07, 05C62, 05C83, 05C12, 52B05, 52B10.

\tableofcontents

\section{Introduction}
\subsection{The congraph of a polyhedron}
\label{sec:mainres}

In this paper, we deal with finite simple graphs $G=(V=V(G),E=E(G))$ with no multiple edges or loops. The {\bf common neighbourhood graph} of $G$, also referred to as the {\bf congraph}, is defined as
$V(\cg(G)):=V(G)$ and
\begin{equation}
	\label{eq:con}
E(\cg(G)):=\{uv : u,v \text{ have at least one common neighbour in } G\}.
\end{equation}

A planar graph is a graph that may be immersed in the plane, or equivalently on the surface of a sphere, so that edges cross only at vertices. By Kuratowski's Theorem, a graph is planar if and only if it does not contain as subgraph a subdivision of $K_5$ or $K(3,3)$ (equivalently, it does not contain these graphs as minors). A plane graph is a planar graph considered together with a plane immersion. A graph on more than $k$ vertices is said to be $k$-connected if and only if however one removes fewer than $k$ vertices, the resulting graph is connected.

Our focus is the class of {\bf polyhedral graphs} or polyhedra for short, $1$-skeleta of polyhedral solids, corresponding to the class of planar, $3$-connected graphs via the Rademacher-Steinitz Theorem \cite{radste}. In the literature they are also called $3$-polytopal graphs or $3$-polytopes. Many concepts defined for plane graphs in general e.g., the regions, the planar dual, the radial and medial graphs are unambiguous and work especially well in the case of polyhedra, encompassing their geometric properties. This stems from the uniqueness of planar immersion for the class of polyhedra (up to choosing the external region), an observation due to Whitney \cite{whit32}. For this reason, a polyhedron may be considered as a plane graph unambiguously. The dual $G^*$ of a polyhedron $G$ is also a polyhedron, corresponding to the dual solid. In this sense for a polyhedron, the plane immersion, the regions (also referred to as faces), the dual, the radial, the medial constitute intrinsic properties.

Moreover, due to their $3$-connectivity, the class of polyhedra may be recursively generated starting from the pyramids (wheel graphs), via two transformations, as proven by Tutte \cite{tutt61}. Another natural concept for plane graphs, that works particularly well for polyhedra, will be introduced in this paper -- see Section \ref{sec:facecon} to follow.

The terminology `cubic' refers to $3$-regular graphs. Note that cubic polyhedra constitute the dual class of the maximal planar graphs (on at least four vertices), since these are the triangulations of the sphere. Henceforth when we write `maximal planar', we will tacitly rule out $K_1,K_2,K_3$.

A planar graph is $2$-connected if and only if in any planar immersion, all the regions are delimited by cycles (polygons) \cite[Proposition 4.2.5]{dieste}. The lengths of (the cycles delimiting) the regions may differ between two immersions.

A plane, $2$-connected graph is a polyhedron if and only if every pair of regions is either disjoint, or has one common vertex, or has one common edge. In the latter case we will say that the faces are adjacent. This is consistent with the corresponding vertices being adjacent in the dual polyhedron. Equivalently, a plane, $2$-connected graph other than $K_3$ is a polyhedron if and only if whenever two regions have two common vertices, these vertices are adjacent.

A graph $G$ is called bipartite if there is a partition $\{V_1,V_2\}$ of $V(G)$ such that if $uw\in E(G)$, then $u\in V_1$ and $w\in V_2$ or vice versa. Equivalently, we may assign one of two colours to each vertex of $G$ in such a way that every edge has endpoints of opposite colours. A planar, $2$-connected graph is bipartite if and only if in any plane immersion, the regions are delimited by polygons with an even number of edges. We will use the shorthand even/odd regions (and faces) to refer to their lengths being even/odd.

For a non-bipartite graph $G$, we will denote by $\od(G)$ the subgraph of $G$ generated by its odd vertices (vertices of odd degree). We will write $F=F(G)$ for the set of faces of a polyhedron $G$. If $G$ is a non-bipartite polyhedron, we call
\begin{equation}
\label{eq:odfa}
\co=\co(G)\subseteq F(G)
\end{equation}
the set of odd faces of $G$. The vertices of
\begin{equation}
	\label{eq:odd}
\od(G^*)
\end{equation}
correspond naturally to the elements of $\co$.

In this investigation, we have completely classified the polyhedra $G$ such that $\cg(G)$ is planar. We write $\overline{G}$ for the complement graph of $G$.
\begin{thm}
	\label{thm:0}
Let $G$ be a polyhedron such that $\cg(G)$ is planar. Then $G$ is cubic. Moreover, either $G$ is bipartite, in which case $\cg(G)$ has exactly two connected components, both of which are $2$-connected, or $\od(G^*)\in\{\overline{K_2},K_4\}$, in which case $\cg(G)$ is a polyhedron.
\end{thm}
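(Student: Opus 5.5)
We prove the theorem in three stages. First we show that planarity of $\cg(G)$ forces $G$ to be cubic. Then we describe $\cg(G)$ for cubic $G$ in terms of the faces. Finally we read off the bipartite and non-bipartite cases.

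\emph{Stage 1: reduction to cubic.} Suppose some vertex $v$ has degree $d\ge 4$. Its neighbours $N(v)$ induce a clique in $\cg(G)$, since any two of them share the neighbour $v$. If $d\ge 5$ this gives a $K_5$ and we are done. So assume $d=4$ for every vertex of degree at least $4$, and let $v$ have neighbours $a,b,c,e$ in cyclic order. We then build a $K_5$ or $K_{3,3}$ minor inside $\cg(G)$ from the clique $\{a,b,c,e\}$ together with further vertices. The faces around $v$ contain paths $a\cdots b$, $b\cdots c$, and so on. Each face of length at least $4$ through $v$ contributes a vertex $x$ adjacent to $a$, so $x$ is a $\cg$-neighbour of $v$. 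Moreover $v$ is a $\cg$-neighbour of every vertex at distance $2$. By $3$-connectivity, $G-v$ is $2$-connected, so paths between $a,b,c,e$ avoiding $v$ exist. The $\cg$-edges from $v$ to the second neighbourhood, combined with these paths contracted appropriately, should yield a $K_5$ minor on $\{a,b,c,e,w\}$, where $w$ is a contracted connected set containing $v$. Carrying this out is the main obstacle: the construction must avoid parity issues (a $\cg$-path only steps by even distances, so we cannot contract $G$-edges directly) and must handle the cases where faces at $v$ are triangles. In those cases the neighbours are themselves $G$-adjacent and have further common neighbours, which gives extra $\cg$-edges.

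\emph{Stage 2: structure for cubic $G$.} For cubic $G$, two vertices $u,w$ share a neighbour exactly when they are at distance $2$ along some face, i.e.\ $u,x,w$ consecutive on a face. The faces through the middle vertex $x$ are the three faces at $x$, and each pair of its neighbours lies on a common one. So $\cg(G)$ is the union, over faces $f$, of the "step-2 cycles" of $f$. An even face of length $2k$ contributes two disjoint $k$-cycles. An odd face of length $2k+1$ contributes a single $(2k+1)$-cycle through all its vertices. Each edge $uw$ of $\cg(G)$ arises from a unique middle vertex $x$, since two common neighbours would create a $4$-cycle whose uniqueness is implied by the polyhedral face condition. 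We then draw $\cg(G)$ in the plane by placing each of these cycles inside its face $f$ and letting vertices be drawn near their original positions. Each $G$-edge becomes a crossing region between cycles of adjacent faces.

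\emph{Stage 3: the cases.} If $G$ is bipartite, the $2$-colouring is preserved by steps of length $2$. Hence $\cg(G)$ splits into two components, one per colour class, each containing at least one vertex of every face. Each component is the "half" medial-type graph, and it is planar: it is the dual-like graph obtained by placing a vertex for each colour-class vertex, with faces corresponding to faces of $G$ and to vertices of the other class. It is $2$-connected by inheritance of the $3$-connectivity of $G$.

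If $G$ is not bipartite, $\cg(G)$ is connected. The two colour classes, which are well defined locally, merge precisely along the odd faces. A topological argument applies here: on the sphere, the odd-face cycles force crossings unless the odd faces can be paired off with no obstruction. This gives the condition $\od(G^*)\in\{\overline{K_2},K_4\}$. Two non-adjacent odd faces allow a "twist" along a path joining them. Four mutually adjacent odd faces (tetrahedral-like) also work. Any other configuration yields a $K_{3,3}$ subdivision, obtained from three odd-face cycles and connecting paths. Adjacent pairs of odd faces are excluded separately, using the paper's lemma on triangulations with two odd-degree vertices applied to $G^*$. Finally, $3$-connectivity of $\cg(G)$ in the good cases follows by checking that the planar drawing has faces meeting properly, via the face criterion quoted in the introduction.
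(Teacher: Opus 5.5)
Your three stages follow the paper's decomposition: Theorem~\ref{thm:2} for non-cubic $G$, Proposition~\ref{p:p2cb} for bipartite $G$, and Theorem~\ref{thm:1} with Lemma~\ref{le:cuodd} otherwise. Your description of the two components in the bipartite case is also right. However, the two substantive steps are asserted rather than proved, and the mechanism you sketch for Stage~1 cannot work as stated.

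You leave the degree-$4$ case as ``the main obstacle''. The target you name, a $K_5$-minor on $a,b,c,e$ plus a connected branch set $w\ni v$, does not exist when $G$ is bipartite. In that case every edge of $\cg(G)$ joins two vertices of the same colour class, so $v$ and $a,b,c,e$ lie in different components of $\cg(G)$. No construction localised at $v$ can settle bipartite $G$.

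For non-bipartite $G$, the paper realises your picture only through Lemma~\ref{le:loop}. That lemma needs two odd cycles through $v$ meeting only at $v$; stepping by two along each gives a $K_5$-subdivision on $\{v\}\cup N(v)$. Such cycles need not exist. An odd cycle has an odd number of odd faces on each side, so two such cycles meeting only at $v$ bound discs meeting only at $v$, each containing an odd face. When $\od(G^*)\simeq K_2$ or $K_4$, any two odd faces are adjacent, so their common edge would have to lie in the single point $v$.

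The paper supplies the missing global information in several steps:
\begin{itemize}
\item Lemma~\ref{le:3cut}: planarity of $\cg(G)$ forces $\{a,v,c\}$ to be a $3$-cut, via an eighteen-case minor check.
\item Lemma~\ref{le:main}: all degree-$4$ vertices lie in a chain of such cuts.
\item Proposition~\ref{p:oddface}: each extreme side of the chain contains an odd face. Otherwise a contraction yields a cubic polyhedron with exactly two odd faces, which are adjacent, contradicting Lemma~\ref{le:cuodd}.
\end{itemize}
Only then do Lemma~\ref{le:loop}, Proposition~\ref{p:na}, Corollary~\ref{cor:K24} and a separate $K_4$ argument finish the proof.

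Stage~3 has the same problem for non-bipartite $G$. The ``topological argument'', the ``twist'', and the claim that every other configuration yields a $K_{3,3}$-subdivision ``from three odd-face cycles'' are not arguments. Nothing in your sketch distinguishes four pairwise adjacent odd faces from four odd faces inducing, say, $2K_2$, a path or a $4$-cycle in $G^*$.

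Necessity needs a precise device such as the paper's evenisation:
\begin{itemize}
\item Pair the odd faces by dual paths of minimal total length; their union is a forest.
\item Delete the edges these paths cross, obtaining a $2$-connected bipartite spanning subgraph $G'$ (Lemma~\ref{le:even}).
\item Then $\cg(G)$ is $\cg(G')$, which consists of two $2$-connected planar pieces, plus the four edges $u_iy_i,u_iz_i,v_iw_i,v_ix_i$ for each deleted edge $u_iv_i$.
\item Planarity is then played off against the cyclic order $u_1,u_h,v_1,v_h$ of endpoints on a common region, which uniqueness of the embedding of $G$ forces.
\end{itemize}
This shows that three pairs cannot be accommodated, and neither can a pair at dual distance at least $2$ together with a second pair. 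With four odd faces one must still derive the cross adjacencies to reach $K_4$.

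Sufficiency likewise needs the explicit constructions of Propositions~\ref{p:K2} and \ref{p:K4}. These draw the two components with prescribed outer regions and check enough distinct attachment vertices for $3$-connectivity; an appeal to ``faces meeting properly'' does not replace this.

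Two smaller points:
\begin{itemize}
\item In Stage~2, an edge of $\cg(G)$ need not have a unique middle vertex: opposite vertices of a quadrangular face have two common neighbours.
\item In the bipartite case, $2$-connectivity of the components is not automatic ``by inheritance''. It needs an argument as in Proposition~\ref{p:p2cb}: take alternate vertices along two internally disjoint paths of $G$, and handle the $4$-cycle case separately.
\end{itemize}
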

The proof of Theorem \ref{thm:0} will be completed at the end of Section \ref{sec:4}. We will inspect separately the cases of $G$ cubic and $G$ non-cubic. Specifically, for cubic polyhedra we will prove the following.

\begin{thm}
	\label{thm:1}
Let $G$ be a cubic polyhedron. Then the following are equivalent:
\begin{enumerate}[label=(\Alph*)]
\item $\cg(G)$ is planar and connected;\label{(a)}
\item $\cg(G)$ is a polyhedron;\label{(b)}
\item $\od(G^*)\in\{\overline{K_2},K_4\}$.\label{(c)}
\end{enumerate}
\end{thm}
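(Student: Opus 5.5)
The plan is to show \ref{(b)}$\Rightarrow$\ref{(a)}$\Rightarrow$\ref{(c)}$\Rightarrow$\ref{(b)}. The first implication is immediate, since a polyhedron is planar and connected. Everything else rests on a local description of $\cg(G)$ for cubic $G$. Each vertex $v$ with neighbours $a,b,c$ contributes the triangle $T_v=abc$ to $\cg(G)$, and every edge of $\cg(G)$ arises this way. Each such edge, say $ab$ coming from $v$, joins two vertices two steps apart along the unique face containing $a,v,b$.

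Hence, for every face $f=f_1f_2\cdots f_k$ of $G$, the edges $f_if_{i+2}$ (indices mod $k$) lie in $\cg(G)$. They form two $k/2$-cycles if $k$ is even, and one $k$-cycle if $k$ is odd. The edge set of $\cg(G)$ is exactly the union of these "skip cycles". A $4$-face contributes repeated edges, and a triangular face contributes edges of $G$ itself. I will also record the standard fact that the distance-$2$ graph of a connected graph is connected if and only if the graph is non-bipartite. In the cubic case this is what $\cg(G)$ is, up to triangles.

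\textbf{The bipartite case.} Here each colour class $V_i$ gives a plane graph: the vertices of $V_i$, the triangles $T_v$ for $v\in V_{3-i}$, and one $k/2$-gon per $k$-face. This is the reverse of the radial construction, so each component is planar and $2$-connected. Since $\cg(G)$ is disconnected, \ref{(a)} fails, while \ref{(c)} also fails because $\co=\emptyset$.

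\textbf{The non-bipartite case: covering construction.} Now $\cg(G)$ is connected. Take the double cover $S$ of the sphere branched at one interior point of each odd face; there are $|\co|=2m$ of them, $m\ge1$, and $S$ has genus $m-1$. A closed walk in $G$ lifts to a closed walk in $S$ if and only if it encloses an even number of branch points. By the handshake lemma in the dual, this happens if and only if the walk has even length. So the lift $\widetilde G\subset S$ is bipartite, cubic, and a connected double cover of $G$, and the two lifts of each $v$ receive opposite colours.

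Consequently one colour class of $\widetilde G$ maps bijectively onto $V(G)$, and the corresponding component of $\cg(\widetilde G)$ is isomorphic to $\cg(G)$. The bipartite construction above then embeds $\cg(G)$ in $S$.

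\textbf{(C)$\Rightarrow$(B).} If $\od(G^*)=\overline{K_2}$, then $S$ is a sphere and we obtain a plane embedding. It remains to check $3$-connectivity of $\cg(G)$ via the face structure: every two faces of the embedding meet properly. Non-adjacency of the two odd faces is exactly what prevents the skip cycles of the two odd faces from sharing an edge and creating a $2$-cut or a repeated edge.

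If $\od(G^*)=K_4$, the surface $S$ is a torus, so a separate argument is needed. Four pairwise adjacent odd faces must meet around a small configuration, essentially a $K_4$-like cluster. I plan to re-route the skip cycles of these faces by hand near the cluster to obtain a plane drawing, and then verify $3$-connectivity as before.

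\textbf{(A)$\Rightarrow$(C).} First use Euler's formula. Count $|E(\cg(G))|=3n-\varepsilon$, where $\varepsilon$ accounts for coincidences from $3$- and $4$-faces. Compare this with the face count of the embedding in $S$ to see that the excess genus is driven by the number of odd faces.

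The cleaner route, and the one I would try first, is to exhibit a Kuratowski subgraph directly. There are three cases:
\begin{enumerate}[label=(\roman*)]
\item when $|\co|\ge6$, or $|\co|=4$ with two odd faces non-adjacent, take two odd faces together with a path of faces joining them; their skip cycles, which are odd star-polygons, should yield a $K_{3,3}$ subdivision;
\item when $|\co|=2$ with the two odd faces adjacent, the shared edge forces a similar local $K_{3,3}$;
\item the cases where $G$ has triangles or $4$-faces need separate care.
\end{enumerate}

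This last direction, the case analysis producing the Kuratowski subgraphs (together with the hand-made planar drawing in the $K_4$ case), is the main obstacle.
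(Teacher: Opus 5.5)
Your branched double cover is correct, and it is a slicker route than the paper's edge-deletion argument in Proposition~\ref{p:K2}. It is the canonical bipartite double cover of $G$, embedded on a surface of genus $|\co|/2-1$, so for $|\co|=2$ it gives planarity of $\cg(G)$ at once.

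\textbf{The case $\od(G^*)\simeq K_2$.} Your construction embeds $\cg(G)$ in the sphere whenever $|\co|=2$, \emph{whether or not the two odd faces are adjacent}. So the local $K_{3,3}$ you plan to find in case (ii) of (A)$\Rightarrow$(C) cannot be exhibited. If $\od(G^*)\simeq K_2$ were possible, $\cg(G)$ would be planar and connected while (C) fails. The implication therefore needs the separate fact that this configuration never occurs, which is exactly Lemma~\ref{le:cuodd}; the paper proves it first, by a minimal counterexample. It also has a short proof in your language. If the odd faces share the edge $ab$, all faces of $G-ab$ are even, so $G-ab$ is bipartite. Then $a,b$ lie in the same class $V_1$, since otherwise $G$ itself would be bipartite. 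Counting edges from each class gives $3|V_1|-2=3|V_2|$, which is impossible.

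Your $3$-connectivity justification in the $\overline{K_2}$ case is also not the right check. Adjacent odd faces would make their skip polygons share two non-adjacent vertices, not an edge. What must be verified is that the triangles on the sets $N(v)$, the half-polygons of the even faces, and the two odd skip polygons pairwise meet in at most a vertex or an edge.

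\textbf{Non-planarity for $|\co|\ge 4$.} This step is only announced. Euler's formula cannot help, because the embedding in $S$ bounds the genus of $\cg(G)$ only from above. The $K_4$ case (torus cover, planar $\cg(G)$) shows this bound is not sharp. Your Kuratowski plan (i) also fails as stated. By your own argument, the skip polygons of two odd faces and a shortest chain of even faces joining them embed in the double cover of that disc branched at two points. That cover is an annulus, so these polygons always form a planar graph, and any obstruction must involve at least four odd faces at once.

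The paper gets this from the evenisation $G'$ (Definition~\ref{def:even}), which is $2$-connected by Lemma~\ref{le:even}. Uniqueness of the embedding of $G$ fixes the cyclic order of the endpoints of the deleted edges around one region of $G'$. Then $u_1z_1,u_hz_h,u_kz_k$ (three pairs), or $u_1z_1,v_1x_1,u_2z_2,v_2x_2,u_hz_h$ (two pairs, one at dual distance at least $2$), cannot be added planarly to the two $2$-connected components of $\cg(G')$. For two adjacent pairs the remaining four adjacencies are forced, giving $K_4$.

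\textbf{The $K_4$ case of (C)$\Rightarrow$(B).} Here $S$ is a torus, and ``re-route by hand'' is not an argument. The paper deletes the two shared edges $ab,cd$ and applies Proposition~\ref{p:p2cb} to the bipartite graph $G-ab-cd$. It then draws the eight extra edges explicitly in both possible configurations. It obtains $3$-connectivity by exhibiting eight distinct vertices among their endpoints.
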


Theorem \ref{thm:1} will be proven in Section \ref{sec:3} (the implication \ref{(b)}$\Rightarrow$\ref{(a)} is clear).

If $G$ is a graph containing a vertex of degree at least $5$, then all of its neighbours are mutually adjacent in $\cg(G)$, so that $\cg(G)$ contains a subgraph isomorphic to $K_5$. Hence for every graph $G$, if $\cg(G)$ is planar, then for the maximal degree $\Delta(G)$ one must have the bound $\Delta(G)\leq 4$. Here for polyhedra we show a stronger result, that represents a milestone in the proof of Theorem \ref{thm:0}: if $\cg(G)$ is planar, then $\Delta(G)=3$.

\begin{thm}
	\label{thm:2}
	Let $G$ be a polyhedron satisfying $\Delta(G)\geq 4$. Then $\cg(G)$ is not planar.
\end{thm}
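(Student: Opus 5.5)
Since a vertex of degree at least $5$ already forces a $K_5$ subgraph in $\cg(G)$, it suffices to treat $\Delta(G)=4$. Fix a vertex $v$ of degree $4$ with neighbours $a,b,c,d$ in the cyclic order of the plane immersion. Any two of them share the common neighbour $v$, so $\{a,b,c,d\}$ induces a $K_4$ in $\cg(G)$. The plan is to exhibit a $K_5$ minor. For this I would find a set $B\subseteq V(G)\setminus\{a,b,c,d\}$ that induces a connected subgraph of $\cg(G)$ and contains, for each $x\in\{a,b,c,d\}$, a vertex at distance exactly $2$ from $x$ in $G$ (or a vertex forming a triangle with $x$). Contracting $B$ then yields $K_5$.

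The natural candidate is $B=\{v\}\cup B'$, where $B'$ is built from the faces $f_{ab},f_{bc},f_{cd},f_{da}$ incident with $v$. The face $f_{ab}$ contains a path $P_{ab}$ from $a$ to $b$ avoiding $v$, and the four faces are pairwise distinct by $3$-connectivity. I would proceed in three steps.

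\begin{enumerate}
\item \textbf{Attachments to $v$.} Every $G$-neighbour $a'$ of $a$ other than $v$ is adjacent to $v$ in $\cg(G)$ (common neighbour $a$). Likewise, every vertex at distance $2$ from $a$ along $P_{ab}$ is $\cg$-adjacent to $a$.
\item \textbf{Connecting the attachments.} Take the vertices of $G-N[v]$ lying on the four facial paths, plus the neighbours of $a,b,c,d$ on them. Using the fact that $G-v$ is $2$-connected, I would show that these can be joined in $\cg(G)$. A $G$-path $x_0x_1\dots x_k$ gives $\cg$-edges $x_ix_{i+2}$, so each parity class is $\cg$-connected. The two classes are glued by $v$ itself, which is $\cg$-adjacent to every vertex at $G$-distance $2$ from it.
\item \textbf{Degenerate configurations.} Some of $a,b,c,d$ may have all their neighbours inside $N[v]$, for instance when $G$ is the square pyramid or contains triangles $vab$. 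These are handled directly. A triangle $vab$ already gives the edge $va$ in $\cg(G)$. For the small exceptional polyhedra, $\cg(G)$ can be checked to contain $K_5$ or $K_{3,3}$ outright.
\end{enumerate}

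The main obstacle is Step 2: the parity issue. Edges of $\cg(G)$ only join vertices at even-length walks, so connectivity of $G-N[v]$ does not transfer directly. If Step 2 fails in some configuration, I would switch targets and look for a subdivided $K_{3,3}$. One option is the bipartition $\{a,c,X\}$ versus $\{b,d,Y\}$: here $a,c$ are $\cg$-adjacent to $b,d$ via $v$, and $X,Y$ are distance-$2$ vertices on the faces $f_{ab}$ and $f_{cd}$. The remaining edges would be realised through the facial paths $P_{bc}$ and $P_{da}$.
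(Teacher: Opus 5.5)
Your proposal has a genuine gap in Step~2, and that step carries the whole difficulty of the theorem. The claim that the two parity classes ``are glued by $v$ itself'' is false. An edge of $\cg(G)$ is a walk of length $2$ in $G$. So if $v$ were $\cg$-adjacent to $x_i$ and $x_j$ on a $G$-path $x_0,\dots,x_k$ with $j-i$ odd, then $G$ would contain an odd closed walk of length $4+|j-i|$. For instance, $v$ reaches both classes of $P_{ab}$ through $a$ and $b$ only when $f_{ab}$ is an odd face.

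In particular, if $G$ is bipartite, every edge of $\cg(G)$ joins two vertices of the same colour. Then $v$ lies in a different component of $\cg(G)$ from $a,b,c,d$, so no $\cg$-connected set $B\ni v$ is adjacent to any of them. Your candidate therefore fails for every bipartite polyhedron with $\Delta(G)=4$, e.g.\ the rhombic dodecahedron. Whether odd cycles sit in the right places is exactly what has to be proved.

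Dropping $v$ from $B$, or switching to your fallback $K_{3,3}$, does not rescue an argument localised at an arbitrarily fixed degree-$4$ vertex. Suppose $G$ is bipartite and $\{a,v,c\}$ is a $3$-cut separating $b$ from $d$. A common neighbour of a vertex on the $b$-side and a vertex on the $d$-side lies in $\{a,v,c\}$ and has the colour of $v$. Hence it is $v$, whose neighbours are $a,b,c,d$. So every path of $\cg(G)$ from the $b$-side to $d$ passes through $a$, $b$ or $c$. Consequently there is neither a $K_5$ minor with branch sets $\{a\},\{b\},\{c\},\{d\},B$ nor a $K_{3,3}$ subdivision with parts $\{a,c,X\}$, $\{b,d,Y\}$.

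This situation does occur with $\Delta(G)=4$. Take the unit-square quadrangulation of the surface of a $2\times1\times1$ box and delete a corner $w$. Glue two copies of the result along the path $a,v,c$ formed by the other three vertices of the end square containing $w$.

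So the Kuratowski obstruction may live far from $v$, and this is why the paper argues globally and by contradiction. Assuming planarity, it proceeds as follows:
\begin{enumerate}
\item Lemma~\ref{le:3cut} (a check of eighteen minors) gives every degree-$4$ vertex $x$ such a $3$-cut $\{a,x,c\}$.
\item Lemma~\ref{le:main} forces $a,c$ to have degree $3$ and lines up the degree-$4$ vertices in a chain.
\item Proposition~\ref{p:oddface} places an odd face in each end piece, using the parity Lemma~\ref{le:cuodd}.
\item Only then, by cases on $\od(G^*)$, does it either find two odd cycles meeting only at $x_1$, or reach a contradiction. In the first case Lemma~\ref{le:loop} yields precisely your $K_5$ on $x_1$ and its four neighbours.
\end{enumerate}
Your Step~3 is also unsupported as written, since you never bound which polyhedra count as ``exceptional''.
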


Theorem \ref{thm:2} will be proven in Section \ref{sec:4}. We remark that, perhaps surprisingly, $\cg(G)$ may be planar also in the case of non-planar $G$, as illustrated in Figure \ref{f:np}. Other examples are given by $\cg(K_{m,n})\simeq K_m\dot\cup K_n$ for $m,n\in\{3,4\}$ (the notation $\dot\cup$ stands for the disjoint union of graphs).
\begin{figure}[ht]
	\centering
	\begin{subfigure}{0.48\textwidth}
		\centering
		\includegraphics[width=5.25cm]{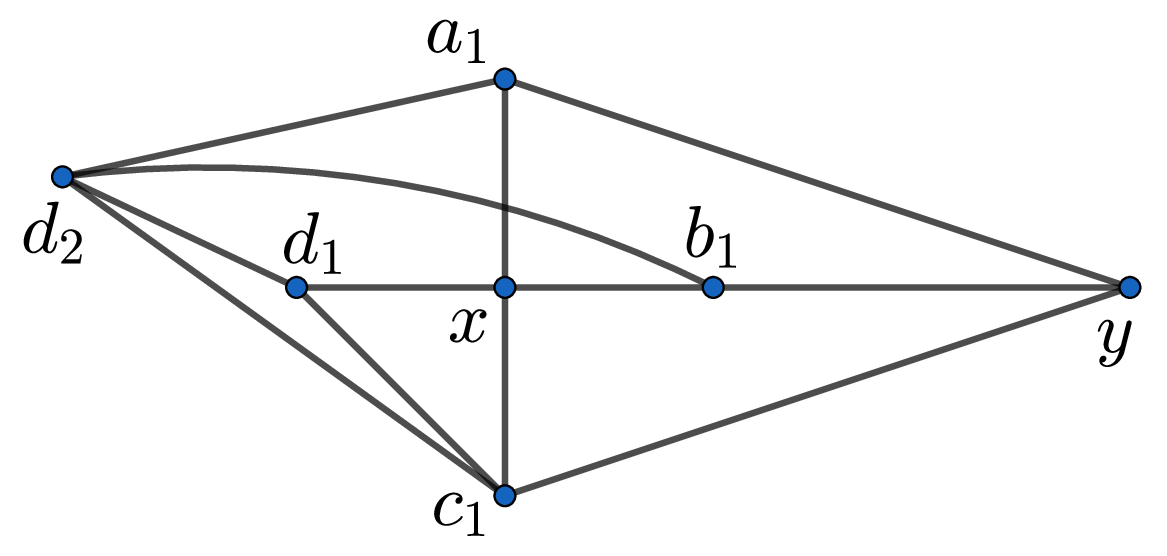}
		\caption{$G$.}
		\label{f:npg}
	\end{subfigure}
	\begin{subfigure}{0.48\textwidth}
		\centering
		\includegraphics[width=3.5cm]{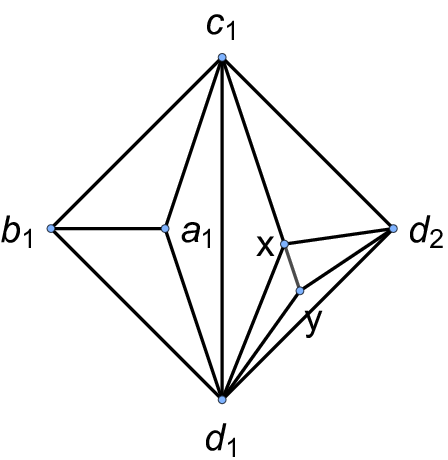}
		\caption{$\cg(G)$.}
		\label{f:npcong}
	\end{subfigure}
	\caption{A non-planar graph $G$ such that $\cg(G)$ is planar (and $2$-connected).}
	\label{f:np}
\end{figure}

\begin{rem}
	Our results on classifying polyhedral graphs $G$ where $\cg(G)$ is planar and their proofs rely on studying the {\bf odd faces} of $G$, including how many there are and {\bf which ones are adjacent} i.e., the vertices and edges of the graph $\od(G^*)$. This is reminiscent of \cite{maffucci2024classification}, where the polyhedral graphs that are Kronecker products of graphs (i.e., direct/tensor products) are characterised and classified, and the results depend on the number of odd faces of the polyhedron, and {\bf how they intersect} (rather than which ones are adjacent).
\end{rem}

\subsection{Definition and remarkable properties of the facial common neighbourhood graph}
\label{sec:facecon}
According to Theorem \ref{thm:0}, $\cg(G)$ is planar only for three specific subclasses of cubic polyhedra $G$. We naturally look to introduce a related concept, a graph to attach specifically to plane graphs $G$, that may be planar for a wider class of polyhedra.
\begin{defin}
Let $G$ be a plane graph. We define its {\bf facial common neighbourhood graph}, or for short {\bf facecongraph} by $V(\fcg(G)):=V(G)$ and
\begin{equation}
	\label{eq:con}
	E(\fcg(G)):=\{uv : u,v \text{ lie along a region of }G\text{ with exactly one vertex between them}\}.
\end{equation}
\end{defin}
An illustration may be found in Figure \ref{f:facecon}. Clearly $\fcg(G)$ is a (spanning) subgraph of $\cg(G)$.
\begin{figure}[ht]
	\centering
	\includegraphics[width=5.cm]{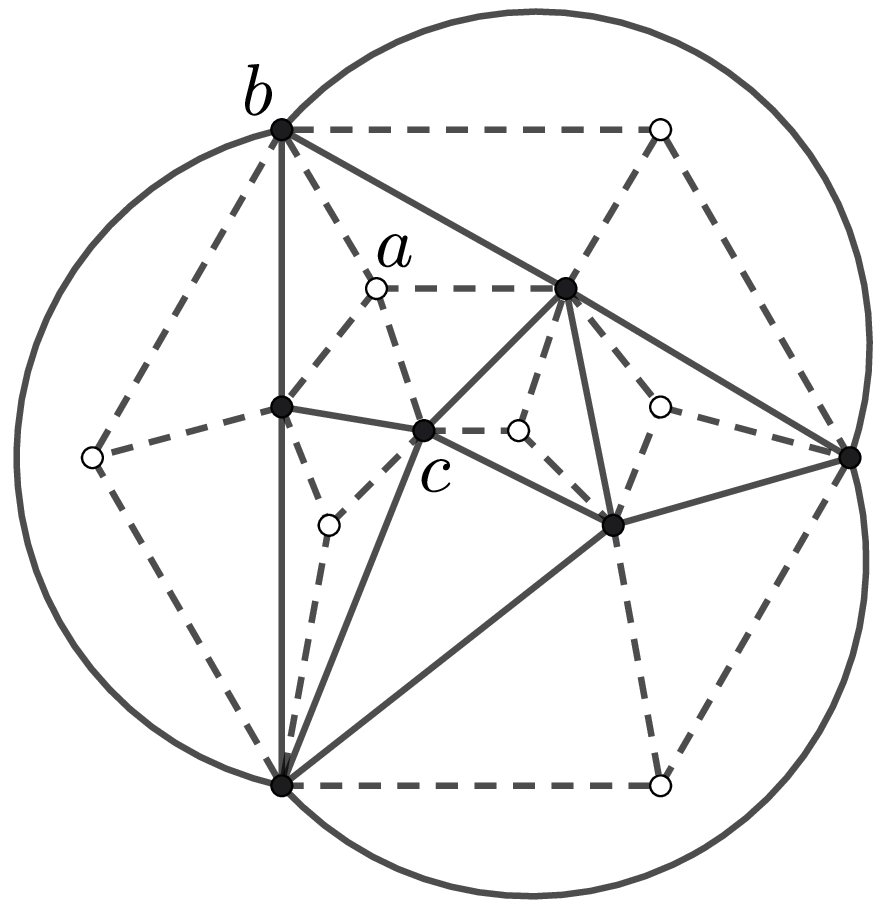}
	\caption{A polyhedron $G$ (dashed) and the subgraph $G_1$ of $\fcg(G)$ spanned by the dark vertices. Note that $b,c$ have the common neighbour $a$ in $G$, but there is no face containing $a,b,c$ in $G$, thus by definition $bc\not\in E(\fcg(G))$, whereas $bc\in E(\cg(G))$.}
	\label{f:facecon}
\end{figure}

In general for a planar graph $G$, the graph $\fcg(G)$ depends on the plane immersion of $G$. Moreover, it may contain multiple edges and/or loops, and one may consider the simplification of $\fcg(G)$ as appropriate. However for a {\bf polyhedron} $G$, the graph $\fcg(G)$ is uniquely determined, since the plane immersion of $G$ is unique, and $\fcg(G)$ is always a simple graph, since in $G$ no two faces may contain two non-adjacent vertices. In what follows, we will focus on the study of $\fcg(G)$ where $G$ is a polyhedron.

We note that if $G$ is a cubic polyhedron, then $\fcg(G)=\cg(G)$. In the case of $G$ being a maximal planar graph, we have instead $\fcg(G)=G$. More generally, if three vertices form a triangular face in $G$, then they form a triangular face in $\fcg(G)$ as well.

As observed in Section \ref{sec:mainres}, if $\cg(G)$ is planar, then $\Delta(G)\leq 4$. As opposed to this, there exist polyhedral graphs $G$ with arbitrarily high $\Delta(G)$ such that $\fcg(G)$ is planar e.g., all maximal planar graphs, and as we shall check in a moment, all bipartite polyhedra.

It is worth underscoring that the common neighbourhood graph $\cg(G)$ is defined for any graph, while its spanning subgraph $\fcg(G)$ is defined for plane graphs, and this definition fits especially well for polyhedra. As a special case, the facecongraph of a cubic polyhedron coincides with its congraph. This is reminiscent of the {\bf line graph} being defined for any graph $G$, and its spanning subgraph the {\bf medial graph} being defined for plane graphs, and fitting especially well for polyhedra \cite{hollowbread2025generation,maffucci2025deza}. Indeed, the line graph connects pairs of vertices corresponding to pairs of incident edges in the original graph, whereas the medial graph takes into account only pairs of incident edges {\em on the same region} of the original plane graph. The cubic polyhedra are a special case, seeing as their medial graph coincides with their line graph, and is a quartic (i.e., $4$-regular) polyhedron. The only polyhedra where the line graph is also polyhedral are the cubic ones \cite{hollowbread2025generation}. More generally, there is a characterisation of which graphs admit a polyhedral line graph \cite{hollowbread2025generation}.

Let $G$ be a polyhedron. The dual of its medial graph is called radial graph $\calr(G)$, or vertex-face graph. It satisfies $V(\calr(G))=V(G)\cup F(G)$ and
\[E(\calr(G))=\{vf : v\in V(G),\ f\in F(G),\text{ and }v\text{ lies on }f\text{ in }G\}.\]
We say that a graph is $k$-edge-connected, $k\geq 1$, if the removal of fewer than $k$ edges results in a connected graph. The class of radial graphs of plane, $2$-connected, $3$-edge-connected graphs are precisely the class of $3$-connected quandrangulations of the sphere \cite[Theorem 5]{brin05} (i.e., duals of $4$-regular polyhedra). Radial graphs of polyhedra are precisely the class of $3$-connected quandrangulations of the sphere with no separating $4$-cycles \cite[Theorem 5]{brin05}.

Let $G$ be a $3$-connected quadrangulation of the sphere. Then the connected components of $\fcg(G)$ are the plane, $2$-connected, $3$-edge-connected graphs $H$ and $H^*$ such that
\begin{equation}
\label{eq:rad}
G\simeq\calr(H)\simeq\calr(H^*).
\end{equation}
Indeed, by definition of radial graph, to recover $H$ and $H^*$ from $\calr(H)$, it suffices to join the opposite vertices of every face of $\calr(H)$. By definition of facecongraph, if $G$ is a $3$-connected quadrangulation, then to obtain $\fcg(G)$ it suffices to join the opposite vertices of every face of $G$.

\begin{rem}
	\label{rem:bip}
	Let $G$ be a plane, $2$-connected, bipartite graph, different from $K_{2,\ell}$, $\ell\geq 2$. Then $\fcg(G)$ is a planar graph with two connected components $G_1,G_2$, both of which are $2$-connected. For $i=1,2$, the plane immersion of $G$ determines a plane immersion of $G_i$ as follows. A region of $G_i$ is delimited by a polygon of length $\ell$, formed either by connecting alternating vertices on a region of $G$ of length $2\ell$, or by connecting in cyclic order the neighbours of a vertex of degree $\ell$ in $G$.
\end{rem}

An illustration for $G,G_1$ is given in Figure \ref{f:facecon}. We have thus checked that the operator
\[G\to\fcg(G)\]
{\bf extends} the operator
\[\calr(H)\to H\ \dot\cup\ H^*\]
from the class of $3$-connected quadrangulations to the class of bipartite polyhedra. Furthermore, we deduce that if $\fcg(G)\simeq\fcg(G')$ where $G,G'$ are $3$-connected quadrangulations, then $G\simeq G'$.

On the other hand, there exist polyhedra $G\not\simeq G'$ such that $\fcg(G)\simeq\fcg(G')$, as illustrated in Figure \ref{f:fcisoa}. Another example is given by the triangular prism and the octahedron, both of which have the octahedron itself as facial common neighbourhood graph.
\begin{figure}[ht]
	\centering
	\begin{subfigure}{0.32\textwidth}
		\centering
		\includegraphics[width=2.5cm]{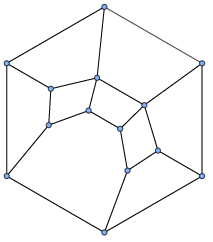}
		\caption{$G$.}
		\label{f:fciso1}
	\end{subfigure}
	\begin{subfigure}{0.32\textwidth}
		\centering
		\includegraphics[width=2.5cm]{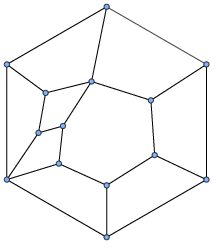}
		\caption{$G'$.}
		\label{f:fciso2}
	\end{subfigure}
	\begin{subfigure}{0.32\textwidth}
		\centering
		\includegraphics[width=3.cm]{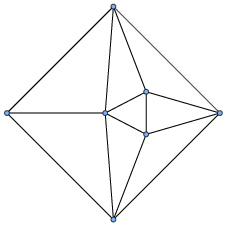}
		\caption{$H$.}
		\label{f:fciso}
	\end{subfigure}
	\caption{$\fcg(G)\simeq\fcg(G')\simeq H\ \dot\cup\ H$, and $G\not\simeq G'$.}
	\label{f:fcisoa}
\end{figure}

The facecongraph thus constitutes an intrinsic property of a polyhedron $G$, similarly to the dual or the radial graph. It may be used to study $G$ and its relevant properties.

\subsection{A result of independent interest on maximal planar graphs}
\label{sec:mpl}

While proving Theorems \ref{thm:0} and \ref{thm:1}, in Section \ref{sec:3} we will establish the following interesting and perhaps surprising result.
\begin{lemma}
	\label{le:cuodd}
	If a maximal planar graph has exactly two odd vertices, then they are not adjacent.
\end{lemma}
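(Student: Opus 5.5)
The plan is to delete the edge joining the two odd vertices and then derive a contradiction from a face $2$-colouring combined with counting edges modulo $3$.

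Let $T$ be a maximal planar graph (so on at least four vertices) whose only odd vertices are $u,v$, and suppose for contradiction that $uv\in E(T)$. Since $T$ is a triangulation with at least four vertices, the edge $uv$ lies on exactly two distinct triangular faces, $uvx$ and $uvy$, with $x\neq y$. Let $T':=T-uv$ with the plane immersion inherited from $T$. Then $T'$ is a connected plane graph. Its faces are all the triangular faces of $T$ other than $uvx$ and $uvy$, together with one new face bounded by the $4$-cycle $uxvy$. Deleting $uv$ lowers the degrees of $u$ and $v$ by one and leaves all other degrees unchanged, so every vertex of $T'$ has even degree.

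Next I would invoke the standard fact that a connected plane graph in which every vertex has even degree (an Eulerian plane graph) has a bipartite dual. Equivalently, its faces can be coloured black and white so that the two faces on either side of each edge receive different colours. This follows because every cycle of the dual crosses each closed curve formed by edges of $T'$ an even number of times; alternatively, $T'$ decomposes into edge-disjoint cycles, and each cycle splits the plane into two sides, which yields a consistent $2$-colouring. I will fix such a colouring of $F(T')$.

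With this colouring, every edge of $T'$ borders exactly one black face and exactly one white face. Hence the sum of the lengths of the black faces equals $|E(T')|$, and so does the sum of the lengths of the white faces. The quadrilateral $uxvy$ has some colour, say black. If there are $a$ black triangles and $b$ white triangles, then $3a+4=|E(T')|=3b$, which would give $4\equiv 0 \pmod 3$, a contradiction. Therefore $u$ and $v$ are not adjacent.

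The only step needing care is the face $2$-colourability of the Eulerian plane graph $T'$, in particular checking that no edge has the same face on both of its sides. That cannot happen here: in the triangulation $T$ every edge lies on two distinct triangles, and after the deletion the only merged face is the $4$-cycle $uxvy$, which is a genuine cycle. Everything else in the argument is a direct count.
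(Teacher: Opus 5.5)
Your proof is correct, and it takes a genuinely different route from the paper's.

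The paper argues by minimal counterexample. It writes $u_1,\dots,u_n$ for the neighbours of the odd vertex $u$, with $u_1$ the other odd vertex. It first excludes every chord $u_iu_j$ with $2\le i,j\le n$, $|j-i|\ge 2$: it cuts along the separating triangle $u,u_i,u_j$ and applies the handshaking lemma to the two resulting smaller triangulations. It then deletes $u$ and re-triangulates the hole by a fan from $u_2$. This yields a smaller triangulation whose only odd vertices are the adjacent pair $u_2,u_3$, a contradiction.

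You instead use a global invariant. After deleting $uv$ every degree is even, so the faces are properly $2$-coloured. The two colour classes must therefore have equal total boundary length, which is impossible modulo $3$ when exactly one face is a quadrilateral and all others are triangles. You could even use one colour class alone: it gives $|E(T')|\equiv 0 \pmod 3$, against $|E(T')|=3p-7$ from Euler's formula.

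Your argument is much shorter. It needs no induction and no case analysis on separating triangles, and it proves more. Any plane graph with all degrees even, whose faces are triangles except for a single face of length $k$, must have $3\mid k$. The paper's argument, by contrast, stays entirely within the class of triangulations and uses only the handshaking lemma and local surgery. It never appeals to the Jordan-curve fact that Eulerian plane graphs have bipartite duals, but pays for this with a longer, more delicate reduction.
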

Equivalently, if a cubic polyhedron has exactly two odd faces, then they are not adjacent. Still equivalently, using the notation that we have introduced, every cubic polyhedron $G$ satisfies $\od(G^*)\not\simeq K_2$.

\subsection{Results in extremal graph theory}
Thanks to Theorems \ref{thm:0} and \ref{thm:1}, it is straightforward to characterise all $(p,q)$-polyhedral graphs $G$ with $p$ fixed, such that $\cg(G)$ is planar, and $q$ is minimised/maximised. This will be done in Section \ref{sec:ext}.

On the other hand, the characterisation of polyhedra $G$ such that $\fcg(G)$ is planar seems to be more intricate. Nevertheless, in Section \ref{sec:faceconpf}, we will investigate the $(p,q)$-polyhedra $G$ with $p$ fixed, such that $\fcg(G)$ is planar, and $q$ is minimised/maximised. For the minimisation, we have the following.

\begin{prop}
	\label{p:min}
	Let $G$ be a polyhedron of order $p$, such that $\fcg(G)$ is planar. Then
	\[|E(\fcg(G))|\geq 2p-4\]
	with equality if and only if $G$ is a quadrangulation.
\end{prop}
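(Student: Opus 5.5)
The plan is to count edges of $\fcg(G)$ face by face and compare the count with Euler's formula. Write $q=|E(G)|$, $F=F(G)$, and $\ell(f)$ for the length of a face $f$. Euler's formula $p-q+|F|=2$, together with $\sum_{f\in F}\ell(f)=2q$, gives
\[\sum_{f\in F}\bigl(\ell(f)-2\bigr)=2q-2|F|=2p-4.\]
It therefore suffices to assign to each face $f$ at least $\ell(f)-2$ edges of $\fcg(G)$, with no edge assigned twice. We must also show that the assignment is tight only for quadrilaterals. Planarity of $\fcg(G)$ should not be needed for the inequality itself.

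\emph{Faces of length at least $4$.} On a face $f$ of length $\ell\geq 4$, the pairs of vertices at distance two along the boundary cycle are edges of $\fcg(G)$. There are exactly $2$ such pairs when $\ell=4$, and exactly $\ell$ pairwise distinct pairs when $\ell\geq 5$. Every face of a polyhedron is an induced cycle, since a chord would give two faces sharing two non-adjacent vertices. Hence none of these pairs is an edge of $G$. Moreover, a pair $uv$ cannot arise from two different faces $f,g$: then $u,v$ would be common to $f$ and $g$, and the polyhedral property would force $uv\in E(G)$, a contradiction. So faces of length at least $4$ contribute pairwise disjoint sets of non-edges of $G$. A quadrilateral contributes exactly $2=\ell-2$, and a face with $\ell\geq 5$ contributes $\ell>\ell-2$.

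\emph{Triangles.} On a triangular face, the pairs at distance two along the face are exactly its three edges, which are edges of $G$. Hence they are disjoint from everything counted above. Each edge of $G$ lies on exactly two faces, so if $G$ has $t$ triangular faces, they contribute at least $3t/2$ distinct edges of $\fcg(G)$. Since $3t/2\geq t=\sum_{\text{triangles}}(\ell-2)$, with strict inequality when $t>0$, triangles are also covered.

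Summing the three contributions gives $|E(\fcg(G))|\geq 2p-4$. Equality forces $t=0$ and no face of length at least $5$, so $G$ is a quadrangulation. Conversely, if $G$ is a quadrangulation, then $\fcg(G)$ consists exactly of the $2$ diagonals of each face, all distinct as shown above. This gives $2|F|=2p-4$ edges, since $2q=4|F|$ combined with Euler's formula yields $|F|=p-2$. (It is also planar, by Remark~\ref{rem:bip}.) The only delicate step is the disjointness of contributions from different faces, which rests on the facts that faces of a polyhedron are induced cycles and that two faces meet in at most an edge; everything else is bookkeeping.
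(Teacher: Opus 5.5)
Your proof is correct, and it takes a different route from the paper's. The paper sums the vertex bound $\deg_{\fcg(G)}(v)\geq\deg_G(v)$ of Lemma~\ref{le:deg} to get $|E(\fcg(G))|\geq q$, with equality if and only if $G$ is a quadrangulation (Corollary~\ref{cor:bds}). It then notes that $q=2p-4$ for quadrangulations.

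That settles the equality case neatly. On its own, however, it does not give $|E(\fcg(G))|\geq 2p-4$ for non-quadrangulations, and still less the strict inequality the ``if and only if'' requires, because $q$ can lie below $2p-4$. For example, the hexagonal prism has $p=12$ and $q=18<20$, and its facecongraph is planar (two disjoint octahedra).

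Your face-by-face charging against $\sum_f(\ell(f)-2)=2p-4$ supplies exactly the missing comparison. In the paper's notation it amounts to combining Lemma~\ref{le:efcg} with Euler's formula, which gives
\[|E(\fcg(G))|-(2p-4)=2f_3+2\sum_{i\geq5}f_i-q_{3,3}\geq\tfrac12 f_3+2\sum_{i\geq5}f_i,\]
using $2q_{3,3}\leq 3f_3$. So your argument proves the full statement, for every polyhedron and without using planarity of $\fcg(G)$. The paper's vertex-based route instead yields the bound $|E(\fcg(G))|\geq|E(G)|$, which is incomparable with $2p-4$ in general and coincides with it only for quadrangulations.

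One small correction to your justification that faces are induced cycles. A chord $uv$ of a face $f$ joins \emph{adjacent} vertices, so the reason is not that two faces would share non-adjacent vertices. Rather, the chord lies on some face $g\neq f$ with $f\cap g\supseteq\{u,v\}$, while $uv$ is not an edge of $f$. This violates the fact that two faces of a polyhedron meet in nothing, a single vertex, or a common edge; equivalently, $\{u,v\}$ would be a $2$-cut.
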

Proposition \ref{p:min} will be proven in Section \ref{sec:faceconpf}.

As for the maximal value of $\fcg(G)$ in the planar case, as mentioned if $G$ is maximal planar, then so is $\fcg(G)$ since in this case $\fcg(G)=G$. However, there are polyhedra $G$ other than the maximal planar graphs such that $\fcg(G)$ is maximal planar. A few examples are sketched in Figure \ref{f:mpl}.
\begin{figure}[ht]
	\centering
	\includegraphics[width=0.2\textwidth]{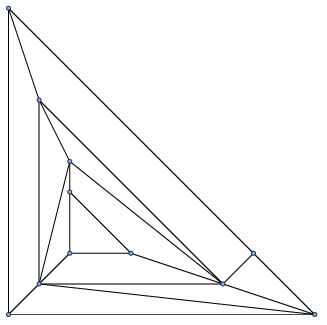}
	\hspace{0.5cm}
	\includegraphics[width=0.125\textwidth]{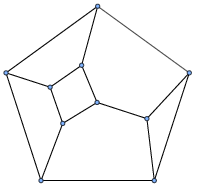}
	\hspace{0.5cm}
	\includegraphics[width=0.11\textwidth]{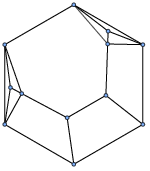}
	\hspace{0.5cm}
	\includegraphics[width=0.11\textwidth]{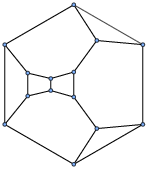}
	\hspace{0.5cm}
	\includegraphics[width=0.11\textwidth]{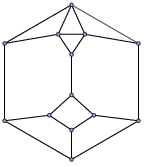}
	\caption{Examples of polyhedra $G$ such that $\fcg(G)$ is maximal planar.}
	\label{f:mpl}
\end{figure}

If $\fcg(G)$ is maximal planar, then we have the following.
\begin{prop}
	\label{p:3456}
	Let $G$ be a polyhedron such that $\fcg(G)$ is maximal planar. Then all faces of $G$ have length at most $6$.
\end{prop}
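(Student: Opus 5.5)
Suppose some face $f$ of $G$ has length $\ell\geq 7$, with boundary cycle $v_0v_1\cdots v_{\ell-1}$ (indices mod $\ell$). We derive a contradiction by comparing the number of edges of $\fcg(G)$ with the triangulation count $3p-6$. A cleaner way to see the obstruction is to find a non-triangular face in a natural plane embedding of $\fcg(G)$ that cannot be filled.

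\textbf{Where the chords come from.} On $f$, the pairs $v_iv_{i+2}$ are edges of $\fcg(G)$. If $\ell$ is odd, they form a single $\ell$-cycle $C$; if $\ell$ is even, they form two $\ell/2$-cycles. The key claim is that no other edge of $\fcg(G)$ joins two vertices of $f$. Let $v_i,v_j$ be two vertices of $f$, non-consecutive on $f$ and not at distance $2$ along $f$. An edge $v_iv_j$ of $\fcg(G)$ requires a face $g$ containing $v_i,v_j$ with one vertex between them. If $g\neq f$, then $f$ and $g$ share the two non-adjacent vertices $v_i,v_j$, which is impossible in a polyhedron. If $g=f$, the two vertices would be at distance $2$ along $f$, which we excluded. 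Edges of $G$ along $f$ need not be edges of $\fcg(G)$, so we ignore them.

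\textbf{The embedding argument.} Place a point $x_f$ inside $f$ and draw the chords $v_iv_{i+2}$ inside $f$; these cross one another. To avoid this, use the standard embedding of $\fcg(G)$ inherited from $G$, which generalises Remark \ref{rem:bip}: each edge $uw$ is drawn through the face witnessing it, routed near the middle vertex. Then the vertices of $f$ span only the cycle(s) above inside $f$. Since $\fcg(G)$ is maximal planar, it is $3$-connected (for $p\geq 4$), so its embedding is unique and every face is a triangle.

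Consider $\ell$ odd, so $C$ is an $\ell$-cycle, $\ell\geq 7$. Any embedding of $\fcg(G)$ restricted to $C$ has two sides. Count the vertices of $\fcg(G)$ adjacent to many vertices of $C$: a vertex $u\notin f$ is adjacent in $\fcg(G)$ to $v_i$ only through a face containing both $u$ and $v_i$, or through a common neighbour on a face. Triangulating the side of $C$ containing no other vertices of $G$ would require chords of $C$, which are pairs $v_iv_j$ at $f$-distance at least $3$ when $\ell\geq7$. These have just been shown not to exist. Hence each side of $C$ must contain a vertex, and the triangulation of the side facing "inside $f$" uses only vertices outside $f$.

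\textbf{Main obstacle.} The difficulty is making this last step rigorous: showing that some side of $C$ (or of one of the $\ell/2$-cycles when $\ell$ is even; for $\ell\geq 8$ these have length at least $4$) contains no vertex. A separating cycle would otherwise be allowed in a triangulation. My plan is to use planarity of $G$ together with its cycle $\partial f$. All vertices other than the $v_i$ lie outside $f$. Every $\fcg$-edge incident to those vertices is drawn outside $f$, while the chord cycle(s) can be drawn within a thin neighbourhood of $\overline{f}$. This gives an explicit plane drawing of $\fcg(G)$ in which the inner side of $C$ is empty. By Whitney uniqueness this drawing is the embedding, and it therefore has a face of length $\ell\geq 7$ (or an empty $\ell/2\geq4$ face for even $\ell$). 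This contradicts maximal planarity.

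For the even case $\ell=8$, the two $4$-cycles interleave and cannot both be drawn inside $f$ without crossing. One of them must be routed outside, and I would handle this by the same empty-face argument applied to one $4$-cycle together with the missing diagonal $v_0v_4$, which is not an edge of $\fcg(G)$. For $\ell\leq 6$ the cycles have length at most $3$, which is why the bound stops at $6$.
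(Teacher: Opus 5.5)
There is a genuine gap, and it sits in the step you yourself flag as the main obstacle. That step claims that in the (unique) embedding of $\fcg(G)$ one side of $C$ contains no vertex; for even $\ell$ the claim is about $v_0v_2\cdots v_{\ell-2}$.

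The justification you give does not work, because the drawing it relies on does not exist. There is no plane drawing of $\fcg(G)$ ``inherited'' from $G$ in this sense. Remark~\ref{rem:bip} works only because the two colour classes lie in different components, which are drawn separately.

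Concretely, every $v_j$ has an $\fcg$-neighbour off $f$. Let $P\neq f$ be the face across $v_{j-1}v_j$, and let $w$ be the other $P$-neighbour of $v_{j-1}$. Then $v_jw\in E(\fcg(G))$, and $w\notin V(f)$ because $P\cap f=\{v_{j-1},v_j\}$. You draw these edges outside $f$. So the part of a thin neighbourhood of $\overline{f}$ left for the chord cycle(s) is topologically a disc with $v_0,\dots,v_{\ell-1}$ on its boundary, in the cyclic order of $\partial f$. In such a disc the arcs $v_0v_2$ and $v_1v_3$ have interleaved endpoints and must cross. The same obstruction is behind your remark that for $\ell=8$ one $4$-cycle ``must be routed outside''.

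Hence Whitney's theorem has nothing to apply to, and the step is unproved. Note that your argument for it uses neither $\ell\ge 7$ nor maximal planarity. What is missing is an actual argument locating the vertices off $f$ relative to $C$ in the embedding of $\fcg(G)$. For instance, connectivity of $\fcg(G)-V(f)$ would put them all on one side by the Jordan curve theorem. The proposal contains no such argument.

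There is also a slip in the chord step for odd $\ell$, though it is repairable. The boundary edges $v_iv_{i+1}$ are chords of $C$: their endpoints are at distance $(\ell-1)/2$ along $C$. Moreover $v_iv_{i+1}\in E(\fcg(G))$ whenever the face of $G$ across $v_iv_{i+1}$ is a triangle. So these edges cannot be ignored, and ``chords of $C$ are pairs at $f$-distance at least $3$'' is false.

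To repair it, observe that for $\ell\ge 7$ the only triangles of $\fcg(G)$ with all vertices on $f$ are the $v_iv_{i+1}v_{i+2}$. Each has exactly one edge on $C$. A vertex-free side of $C$ would therefore be a triangulated polygon without ears, which is impossible. This is where $\ell\ge 7$ genuinely enters your approach: for $\ell=5$, triangles such as $v_0v_1v_3$ exist.

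For comparison, the paper avoids any global embedding of $\fcg(G)$. It uses only that each edge $u_iu_{i+2}$ lies on two triangles $u_ix_iu_{i+2}$ and $u_iy_iu_{i+2}$ of $\fcg(G)$. It takes faces $\alpha_i,\beta_i,\gamma_i,\delta_i$ of $G$ witnessing the four edges, and shows by polyhedrality and $n\neq 5,6$ that none of them is $\varphi$. Planarity of $G$ then gives $\{\beta_{i+1},\delta_{i+1}\}=\{\beta_i,\delta_i\}$ with $\beta_i\neq\delta_i$. So a single face other than $\varphi$ would contain all of $u_1,\dots,u_n$, a contradiction.
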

Proposition \ref{p:3456} will be proven in Section \ref{sec:faceconpf}.

The complete classification of the polyhedra $G$ such that $\fcg(G)$ is maximal planar is probably difficult. Due to Proposition \ref{p:3456}, we know that every face of $G$ is of length at most $6$. Focussing on the class of face length at most $4$, we will establish its full characterisation and construction. This involves the following graph transformations.

\begin{defin}
	\label{def:tr}
	Let $G$ be a polyhedron, and $[a,b,c]$ a triangular face of $G$. We define the transformations $\ct_1$ where we replace $[a,b,c]$ with the graph in Figure \ref{f:tr1}, $\ct_2$ where we replace $[a,b,c]$ with the graph in Figure \ref{f:tr2}, also colouring a face in red as shown, and $\ct_3$ where we replace $[a,b,c]$ with any triangulation.
	\begin{figure}[h!]
		\centering
		\begin{subfigure}{0.48\textwidth}
			\centering		\includegraphics[width=2.75cm]{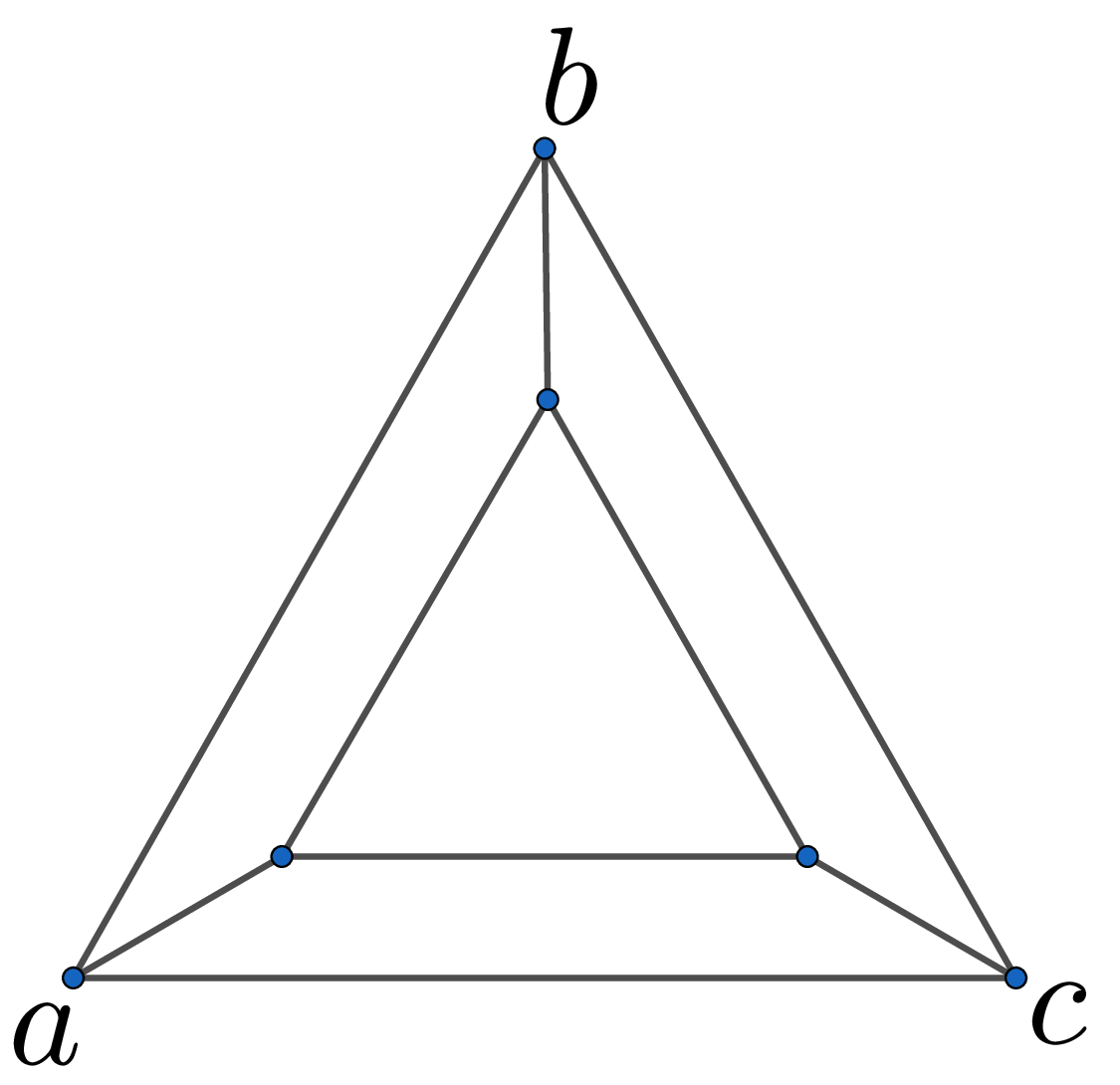}
			\caption{Transformation $\ct_1$.}
			\label{f:tr1}
		\end{subfigure}
		\begin{subfigure}{0.48\textwidth}
			\centering
			\includegraphics[width=2.75cm]{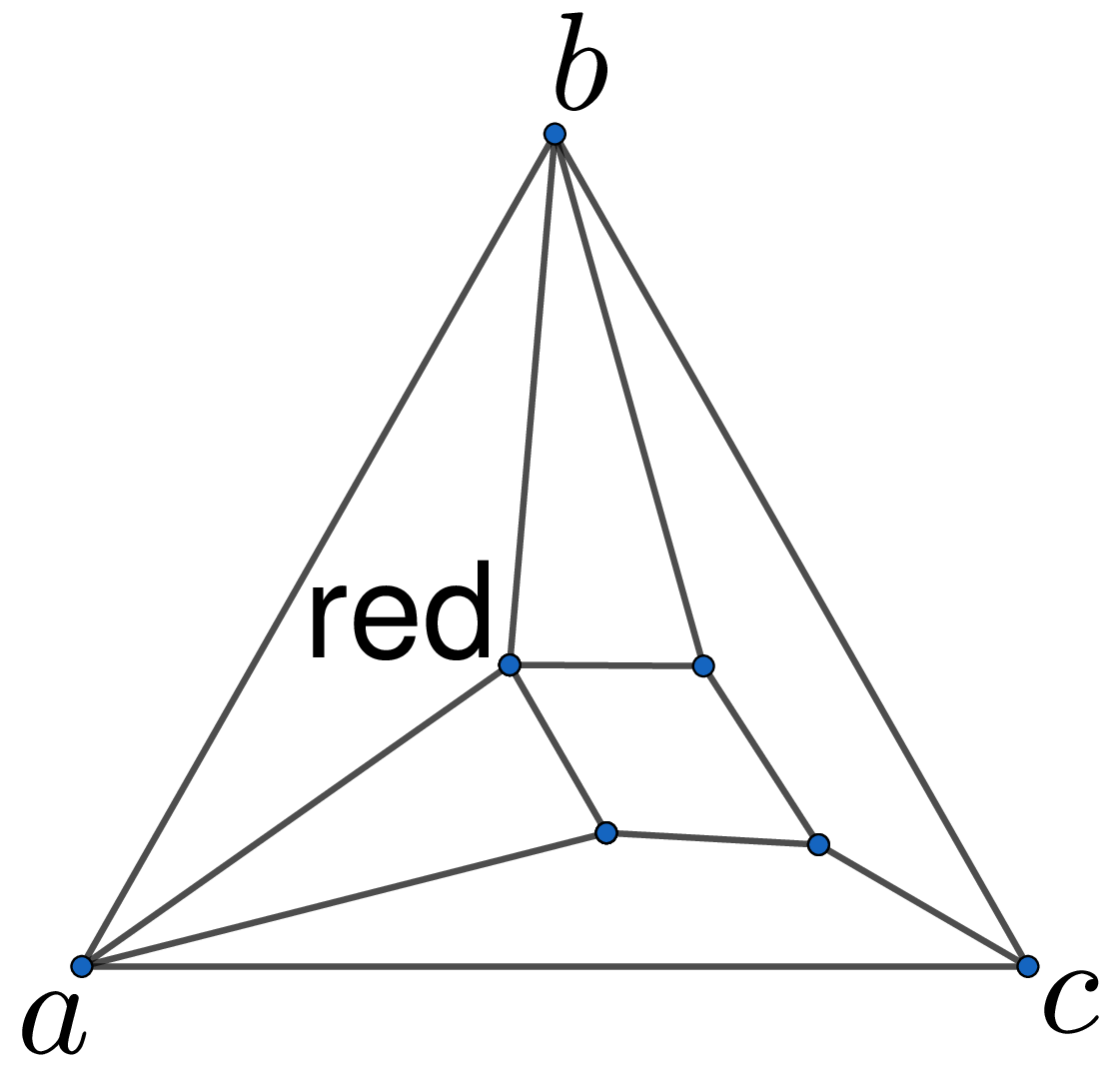}
			\caption{Transformation $\ct_2$.}
			\label{f:tr2}
		\end{subfigure}
		\caption{Definition \ref{def:tr}.}
		\label{f:tr}
	\end{figure}
	
\end{defin}

Remarkably, the class of polyhedra $G$ of maximum face length $4$ and such that $\fcg(G)$ is maximal planar admits the following neat construction.
\begin{thm}
	\label{thm:maxpl}
	Let $G$ be a polyhedron of maximal face length $4$. Then $\fcg(G)$ is maximal planar if and only if $G$ may be constructed from an initial triangle by iterated applications of the transformations $\ct_1$, $\ct_2$, $\ct_3$ to a non-red triangular face $[a,b,c]$ of $G$, where for $\ct_1$ each of the edges $ab,bc,ca$ belongs to a triangular face, and for $\ct_2$ each of $bc,ca$ belongs to a triangular face.
	\\
	Moreover, the number of edges of $G$ is a multiple of $3$.
\end{thm}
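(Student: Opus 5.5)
The plan is to prove the two implications separately, and to get the divisibility statement from the construction. First I record the local description of $\fcg(G)$ when every face of $G$ has length $3$ or $4$. Two vertices on a common face have exactly one vertex between them in two cases only. Either they are two vertices of a triangular face, or they are opposite vertices of a quadrilateral face. So $E(\fcg(G))$ consists of the edges of $G$ lying on at least one triangular face, together with the two diagonals of each quadrilateral face. Let $f_3,f_4$ be the numbers of triangular and quadrilateral faces. Let $q_{44},q_{34},q_{33}$ be the numbers of edges of $G$ whose two incident faces are respectively two quadrilaterals, one of each kind, and two triangles. Then
\[|E(\fcg(G))|=q-q_{44}+2f_4 .\]
Combining Euler's formula $p-q+f_3+f_4=2$ with $2q=3f_3+4f_4$, the equation $|E(\fcg(G))|=3p-6$ becomes equivalent to $q_{44}=f_4$. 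Planarity is a separate requirement. In a polyhedron, $\fcg(G)$ is a simple graph, and there is a natural immersion: draw each diagonal inside its quadrilateral face. That immersion has crossings exactly where the two diagonals of a quadrilateral meet. So the real content is the following. $\fcg(G)$ is maximal planar if and only if it is planar and the quadrilateral faces satisfy the count $q_{44}=f_4$; equivalently, in $G^*$ the subgraph induced on the quadrilateral faces has as many edges as vertices.

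\emph{Sufficiency.} I would check that $\ct_1,\ct_2,\ct_3$ each preserve the property ``$\fcg$ is maximal planar'', starting from the triangle, where $\fcg=K_3$. The check is local, because $\fcg$ only changes inside the replaced face $[a,b,c]$ and along faces meeting it. For $\ct_3$ this is immediate, since triangles contribute their own edges and the replaced face becomes a triangulated patch. For $\ct_1$ and $\ct_2$ I would draw the new diagonals together with the surviving triangle edges. The side conditions are used here: each of the relevant edges among $ab,bc,ca$ lies on a triangular face, and the red face is never transformed again. Together they ensure the following. Every boundary edge of the patch that stops lying on a triangle inside the patch is still in $\fcg$ via an outside triangle. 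Also, no later transformation turns an outside face into a quadrilateral adjacent to the gadget's quadrilaterals in a way that breaks $q_{44}=f_4$ or creates a crossing. Each transformation adds a multiple of $3$ edges. A triangulation of a triangle with $k$ interior vertices adds $3k$ edges, and $\ct_1,\ct_2$ are read off their figures. By induction this gives the ``moreover'' part.

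\emph{Necessity.} I would argue by induction on $p$. If $f_4=0$, then $G$ is a triangulation and is obtained by one application of $\ct_3$. Otherwise, consider the graph $Q$ whose vertices are the quadrilateral faces, two being adjacent when they share an edge. Each component of $Q$ must have exactly as many edges as vertices. Here planarity of $\fcg(G)$ should forbid a bad configuration: two quadrilaterals sharing an edge create a $K_5$- or $K_{3,3}$-type obstruction, unless the surrounding faces are triangles arranged as in the images of $\ct_1$ or $\ct_2$. So each component of $Q$ should be a short cycle of quadrilaterals, and I expect these to be exactly the quadrilateral clusters created by $\ct_1$ and $\ct_2$. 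I would then show that each such cluster, together with its neighbouring triangles, is bounded by a separating triangle $[a,b,c]$ of $G$. Collapsing the interior to a single face yields a smaller polyhedron $G'$ with $\fcg(G')$ still maximal planar, and the induction applies. The analogous reduction for clusters of triangles reverses $\ct_3$.

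The main obstacle is this structural step. I must show that planarity of $\fcg(G)$ together with $q_{44}=f_4$ forces every quadrilateral cluster to be precisely a $\ct_1$ or $\ct_2$ gadget sitting inside a separating triangle. I must also show that the triangle-adjacency side conditions hold at the moment of the reverse step. I would attack it by a case analysis on a quadrilateral face and its four neighbouring faces. Each case exhibits explicit subdivisions of $K_5$ or $K_{3,3}$ in $\fcg(G)$ except in the admissible configurations.
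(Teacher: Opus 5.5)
\textbf{The necessity direction has a genuine gap.} Your reformulation is correct: maximal planarity of $\fcg(G)$ is equivalent to planarity together with $q_{44}=f_4$, which is the paper's identity $f_4=q_{4,4}$. Your sufficiency argument follows the paper in outline, apart from the red-face point at the end. The necessity direction, however, is exactly where the paper does its work, and your plan does not cover it.

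\textbf{First, the localisation step is unjustified.} The claim ``each component of $Q$ must have exactly as many edges as vertices'' does not follow from $q_{44}=f_4$, which is a global count. A priori one component could be a tree while another carries extra edges. The paper localises the count in two steps.
\begin{enumerate}
\item Double counting edges on quadrilaterals gives $4f_4=2q_{44}+q_{34}$, hence $q_{34}=2f_4$.
\item Every quadrilateral meets at least two triangles. This uses the $3$-connectivity of the maximal planar graph $\fcg(G)$, a tool your plan never invokes. Suppose $ab,bc,cd$ of $[a,b,c,d]$ all lie on quadrilaterals. The edges $ac$ and $bd$ of $\fcg(G)$ each need two common $\fcg$-neighbours, and by planarity this forces the faces $[a,b,w,z]$, $[b,c,x,w]$, $[c,d,y,x]$, $[d,a,z,y]$. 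Then $N_{\fcg(G)}(a)=\{c,w,y\}$ and $N_{\fcg(G)}(c)=\{a,w,y\}$, so $\{w,y\}$ is a $2$-cut.
\end{enumerate}
Summing $q_{34}(\alpha)\ge 2$ over all quadrilaterals and comparing with $q_{34}=2f_4$ forces equality. So every quadrilateral meets exactly two triangles and two quadrilaterals, i.e.\ $Q$ is $2$-regular.

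\textbf{Second, a local case analysis cannot identify the clusters.} Even knowing that $Q$ is a union of cycles, the decisive constraints are global along a cycle $\alpha_1,\dots,\alpha_A$. The paper splits according to whether the cycle has an inner boundary cycle $C_u$ or a common vertex $x$.
\begin{itemize}
\item In the first case, a $K_{3,3}$-minor of $\fcg(G)$ that uses the whole outer cycle $C_w$ as a path forces $U=A=W=3$. This is the $\ct_1$ gadget.
\item In the hub case, $\deg_{\fcg(G)}(x)=A$, and maximal planarity forces $w_{2i}w_{2i+2}\in E(G)$ for every $i$. Hence $[w_2,w_4,\dots,w_{2A}]$ is a face of $G$. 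Only here does the face-length hypothesis enter, giving $A\le 4$. The case $A=4$ is excluded because that quadrilateral would meet only triangles. What remains is $A=3$ with the red face $[w_2,w_4,w_6]$, the $\ct_2$ gadget.
\end{itemize}
Nothing about a single quadrilateral and its four neighbours bounds $A$.

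\textbf{Consequences for the rest of your plan.} Only after this classification can you choose a cluster all of whose complementary regions but one are free of quadrilaterals. You then undo $\ct_3$ in those regions and collapse the cluster into a triangle $[a,b,c]$. Your proof of the ``moreover'' part goes through the construction, so it inherits this gap. The paper instead reads off $3\mid f_4$ from the classification and combines it with $q_{44}=f_4$, $q_{34}=2f_4$ and $3f_3=2q_{33}+q_{34}$.

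\textbf{The red face in the sufficiency direction.} The red face matters for a different reason than the one you give. In $\fcg(G)$ the hub $x$ has exactly the neighbours $w_2,w_4,w_6$. So the red face of $G$ is a separating triangle of $\fcg(G)$, and transforming it, even by $\ct_3$, destroys planarity. Your claim that the $\ct_3$ case is ``immediate'' therefore needs an invariant carried through the induction: every non-red triangular face of $G$ is a face of $\fcg(G)$.
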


Theorem \ref{thm:maxpl} will be proven in Section \ref{sec:faceconpf}.

\paragraph{Related literature.} Common neighbourhood graphs, sometimes called open neighbourhood graphs or two-step graphs, have received recent attention e.g., \cite{alwardi2012common,knor2014wiener,schweitzer2013iterated,sonntag2012iterated,tian2020exploiting}.
\\
In \cite{maffucci2025classification,maffucci2025common,maffucci2025deza}, we investigated the set $A_n(G)$ of quantities of common neighbours for any $n$-tuple of distinct vertices of a graph $G$. We fully classified the planar graphs according to $A_n(G)$ \cite{maffucci2025classification,maffucci2025common}. In the case of $G$ being a regular polyhedron, we focused on the finer structure of $G$ according to $A_n(G)$, generalising the concept of Deza graph in the polyhedral scenario \cite{maffucci2025deza}.
\\
Other problems concerning adjacency in polyhedra have been addressed in \cite{gaspoz2024independence,maffucci2023smallest}.

\paragraph{Notation and terminology.} The set $N(u)=N_G(u)$ collects the neighbours of $u$ in a graph $G$. The length of a path/cycle/circuit is its number of edges.
\\
We denote by $\langle U\rangle$ the subgraph of $G$ induced by the set $U\subset V(G)$. A minor of a graph $G$ is a graph obtained from $G$ by deleting vertices and/or edges, and/or contracting edges.
\\
If $G$ is a polyhedron, we will write $[a_1,a_2,\dots,a_n]$ for an $n$-gonal face. We will use Greek letters for a vertex $\alpha\in V(G^*)$, and the same label for the corresponding face $\alpha\in F(G)$. For $u,v\in V(G)$, we call $\dist(u,v)$ the distance in $G$ between these vertices. For $\alpha,\beta\in F(G)$, we call $\dist(\alpha,\beta)$ the distance in $G^*$ between the vertices corresponding to $\alpha,\beta$.

\paragraph{Plan of the paper.}
In Section \ref{sec:3}, we will prove Theorem \ref{thm:1} and Lemma \ref{le:cuodd}. In Section \ref{sec:4}, we will prove Theorems \ref{thm:2} and \ref{thm:0}. Section \ref{sec:ext} is dedicated to extremal results for polyhedra where the congraph is planar. In Section \ref{sec:faceconpf}, we establish several extremal results for polyhedra where the facecongraph is planar, including Theorem \ref{thm:maxpl} and Propositions \ref{p:min} and \ref{p:3456}.

\section{Cubic polyhedra: proof of Theorem \ref{thm:1}}
\label{sec:3}
\subsection{Proof of Theorem \ref{thm:1}, \ref{(c)}$\Rightarrow$\ref{(b)} and of Lemma \ref{le:cuodd}}

We begin by proving the result on maximal planar graphs stated in Section \ref{sec:mpl}.
\begin{proof}[Proof of Lemma \ref{le:cuodd}]
Recall that the maximal planar graphs are the triangulations of the sphere. By contradiction, let $G$ be a triangulation on $p$ vertices such that $\od(G)\simeq K_2$ and $p$ is minimal. We denote by $u,u_1$ the odd vertices, and by
\[u_1,u_2,\dots,u_n, \quad n\geq 3\]
the neighbours of $u$, in cyclic order around this vertex in the plane immersion of $G$. Note that $n$ is odd. The subgraph of $G$ spanned by $u,u_1,u_2,\dots,u_n$ is an $n$-gonal pyramid of apex $u$.

We claim that
\begin{equation}
	\label{eq:claim}
u_iu_j\not\in E(G), \quad\forall\ 2\leq i,j\leq n,\ |j-i|\geq 2.
\end{equation}
By contradiction, we may find $2\leq i,j\leq n$, $|j-i|\geq 2$ such that $u_iu_j\in E(G)$, as in Figure \ref{f:inout}. The triangle
\[C: u,u_i,u_j\]
is a separating triangle in $G$. It delimits two maximal planar subgraphs of $G$, that we will call $G_\text{in}$ and $G_\text{out}$, spanned respectively by the vertices on and inside of $C$, and by those on and outside of $C$. W.l.o.g., $u_1\in V(G_\text{in})$. We note that since $|j-i|\geq 2$, then $G_\text{in}$ and $G_\text{out}$ have strictly fewer vertices than $G$. As $\deg_G(u)$ is odd, and moreover by construction we have $\deg_{G_\text{in}}(u)+\deg_{G_\text{out}}(u)=\deg_G(u)+2$, then
\[\deg_{G_\text{in}}(u) \qquad\text{ and }\qquad \deg_{G_\text{out}}(u)\]
have different parity. Suppose for the moment that $\deg_{G_\text{out}}(u)$ is odd. Then $V(\od(G_{out}))$ contains $u$, possibly $u_i$ and $u_j$, and no other vertices. By the handshaking lemma for $G_\text{out}$, one of
\[\deg_{G_\text{out}}(u_i)\qquad \text{ and }\qquad \deg_{G_\text{out}}(u_j)\]
is even and the other odd. Thus $\od(G_{out})\simeq K_2$, contradicting the minimality of $p$.
\begin{figure}[ht]
\centering
\begin{subfigure}{0.48\textwidth}
\centering
\includegraphics[width=3.5cm]{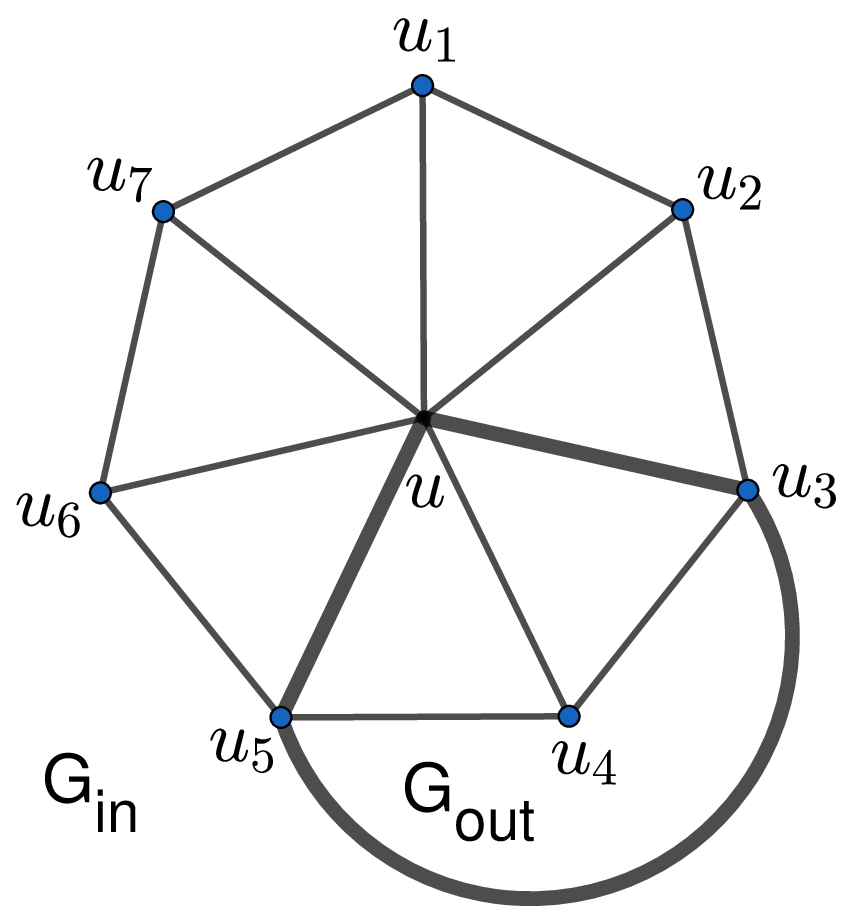}
\caption{$u_iu_j\in E(G)$, $i=3$ and $j=5$.}
\label{f:inout}
\end{subfigure}
\begin{subfigure}{0.48\textwidth}
\centering
\includegraphics[width=3.5cm]{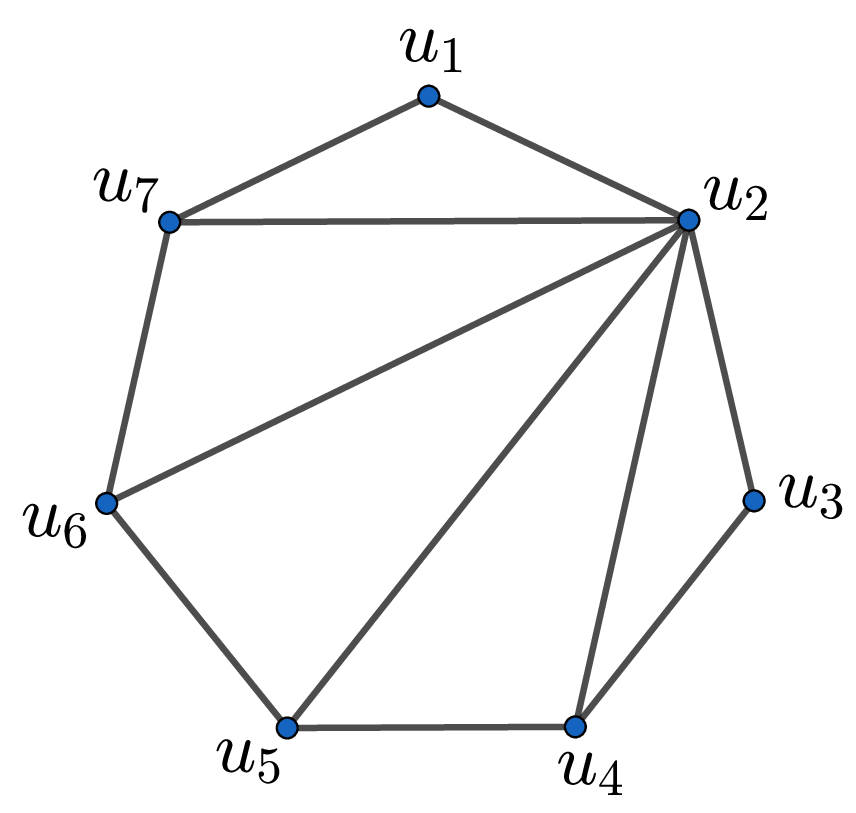}
\caption{Construction of $G'$.}
\label{f:delu}
\end{subfigure}
\caption{Lemma \ref{le:cuodd}. Only a subgraph of $G$ is shown.}
\label{f:odd}
\end{figure}

The remaining possibility is that $\deg_{G_\text{out}}(u)$ is even, thus $\deg_{G_\text{in}}(u)$ is odd. Since $\deg_{G_\text{out}}(u)$ is even, by the handshaking lemma for $G_\text{out}$, $\deg_{G_\text{out}}(u_i)$ and $\deg_{G_\text{out}}(u_j)$ have the same parity. By minimality of $p$, $\od(G_{out})\not\simeq K_2$, thus $\deg_{G_\text{out}}(u_i)$ and $\deg_{G_\text{out}}(u_j)$ are both even. Since $u_i,u_j$ have even degree in $G$ and $G_{out}$, then they have even degree also in $G_{in}$. It follows that $V(\od(G_{in}))=\{u,u_1\}$, again contradicting the minimality of $p$. Hence indeed \eqref{eq:claim} holds.

We now construct the graph of order $p-1$
\[G':=G-u+u_2u_4+u_2u_5+\dots+u_2u_n,\]
as in Figure \ref{f:delu} (if $n=3$, then simply $G'=G-u$). By \eqref{eq:claim}, $G'$ is a simple graph, and in fact it is a triangulation of the sphere. The degrees of $u_4,u_5,\dots,u_n$ are the same in $G$ and $G'$. The degrees of $u_1$ and $u_3$ have each been reduced by $1$, so that $u_1$ is even and $u_3$ is odd in $G'$. As for $u_2$, we have
\[\deg_{G'}(u_2)=\deg_{G}(u_2)-1+(n-3).\]
As $n$ is odd, the parity of $u_2$ has changed. Hence $V(\od(G'))=\{u_2,u_3\}$, and since $u_2u_3\in E(G')$, we deduce that $\od(G')\simeq K_2$, contradicting the minimality of $p$. The proof of this lemma is complete.
\end{proof}

The next step towards proving the theorems stated in Section \ref{sec:mainres} is the following (cf.\ Remark \ref{rem:bip}).
\begin{prop}
	\label{p:p2cb}
Let $G\not\simeq K(2,2), K(2,3)$ be a planar, $2$-connected, bipartite graph satisfying $\Delta(G)\leq 3$. Then $\cg(G)$ has exactly two connected components, and each of them is a planar, $2$-connected graph.
\end{prop}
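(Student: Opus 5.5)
The plan is to show that, under the hypothesis $\Delta(G)\leq 3$, $\cg(G)$ coincides with $\fcg(G)$ for any plane immersion of $G$. The statement then reduces to the situation of Remark \ref{rem:bip}. Since that remark was stated without proof, I will also verify its content directly: connectedness, $2$-connectivity and planarity of each component.

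\emph{Step 1: $\cg(G)=\fcg(G)$.} Fix a plane immersion of $G$. Let $u,w$ be two vertices with a common neighbour $v$. Since $G$ is $2$-connected, $\deg(v)\in\{2,3\}$.
\begin{itemize}
\item If $\deg(v)=2$, the two neighbours of $v$ are consecutive in the rotation at $v$.
\item If $\deg(v)=3$, any two of its three neighbours are consecutive in the cyclic rotation at $v$.
\end{itemize}
In both cases $u,v,w$ lie along a common region with $v$ between $u$ and $w$, so $uw\in E(\fcg(G))$. The reverse inclusion is immediate.

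\emph{Step 2: two components, and the exclusions.} Let $\{V_1,V_2\}$ be the bipartition. Every edge of $\cg(G)$ joins vertices at distance $2$, so it stays inside a colour class. Conversely, take $x,y\in V_i$ and a path joining them in $G$; its vertices of colour $i$ occur at every second step, and consecutive ones are adjacent in $\cg(G)$. Hence $\cg(G)$ has exactly the two components $G_i=\langle V_i\rangle_{\cg(G)}$, $i=1,2$.

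Next, no class has size $1$: if $V_i=\{x\}$, then $x$ would be a cut vertex, or $G$ would be $K_2$. If $|V_i|=2$, then by $2$-connectivity every vertex of the other class, which has degree at least $2$, is adjacent to both vertices of $V_i$. So $G\simeq K(2,\ell)$, and $\Delta(G)\leq 3$ forces $\ell\in\{2,3\}$, which is excluded. Therefore $|V_i|\geq 3$ for $i=1,2$.

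\emph{Step 3: $2$-connectivity.} Let $x\in V_i$ and $y,z\in V_i\setminus\{x\}$. Since $G$ is $2$-connected, $G-x$ contains a $y$–$z$ path. Taking its colour-$i$ vertices at every second step gives a $y$–$z$ walk in $G_i-x$. Combined with $|V_i|\geq 3$, this shows that $G_i$ is $2$-connected.

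\emph{Step 4: planarity.} This is the main point needing care. I would draw $G_i$ using the immersion of $G$, as follows.
\begin{itemize}
\item For each region $f$ of $G$, of length $2\ell$, draw the chords joining consecutive colour-$i$ vertices on the boundary of $f$ inside $f$ as a non-crossing polygon. This is possible since they are consecutive pairs around a cycle; for $\ell=2$ it is a single edge.
\item Edges drawn in different regions cannot cross.
\end{itemize}
By Step 1, every edge of $G_i$ arises this way: the common neighbour $v$ lies between $u$ and $w$ on some face. The only issue is duplication, since the same pair may arise from several faces. In that case I keep one copy and delete the others, which preserves planarity. Hence each $G_i$ is a planar, $2$-connected graph. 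This recovers Remark \ref{rem:bip} in our setting and completes the proof.
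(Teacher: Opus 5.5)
Your proof is correct. For the splitting into two components and for planarity you follow essentially the paper's route. The paper also observes that, because $\Delta(G)\leq 3$, every edge of $\cg(G)$ inside a colour class is a diagonal joining vertices at distance $2$ along a facial cycle, so each class can be drawn with these diagonals inside the faces. Your Steps 1, 2 and 4 spell out what the paper leaves implicit: the rotation argument at $v$, connectivity of each class via alternate vertices on a path, the non-crossing drawing, and the removal of repeated pairs.

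You genuinely differ from the paper on $2$-connectivity. The paper takes $u,v\in V_1$ and two internally disjoint $uv$-paths in $G$, then keeps every second vertex to obtain two internally disjoint $uv$-paths in $\cg(G)$. This breaks down when both paths have length $2$, i.e.\ for a $4$-cycle $u,a_1,v,b_1$, where both images collapse to the single edge $uv$. The paper patches that case with a third neighbour of $a_1$ or $b_1$, or with a longer $uv$-path, and this is where $G\not\simeq K(2,2),K(2,3)$ is used.

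You instead verify the definition directly. For $x\in V_i$, a $y$--$z$ path in $G-x$ projects onto its colour-$i$ vertices as a path in $G_i-x$. The exclusions then serve only to guarantee $|V_i|\geq 3$, which you check separately by showing that $|V_i|\leq 2$ forces $G\simeq K(2,\ell)$ with $\ell\in\{2,3\}$. Your version is shorter and has no special case. It also makes clear that $2$-connectivity of the two components uses neither planarity nor $\Delta(G)\leq 3$, only $2$-connectivity, bipartiteness and $|V_i|\geq 3$.

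The paper's path-pair argument instead exhibits a cycle of $\cg(G)$ through any prescribed pair of vertices. That is the same viewpoint it reuses in Proposition~\ref{p:2conn} for non-bipartite graphs.
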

\begin{proof}
We consider a plane immersion of $G$. By $2$-connectivity, each region is bounded by a cycle. Since $G$ is bipartite, these are all even cycles. Let $V_1$ be either of the two sets in the vertex bipartition of $G$. Since $\Delta(G)\leq 3$, the edges incident to elements of $V_1$ in $\cg(G)$ are all obtained by drawing diagonals between vertices of $V_1$ at distance $2$ along the cycles bounding the regions of $G$. Thus indeed $\cg(G)$ has two connected components, and both are planar graphs.

It remains to show the $2$-connectivity of the components of $\cg(G)$. Let $u,v\in V_1$. By $2$-connectivity of $G$, there exist in $G$ two internally disjoint $uv$-paths
\[u,a_1,a_2,\dots,a_m,v,\qquad m\geq 1\]
and
\[u,b_1,b_2,\dots,b_n,v,\qquad n\geq 1.\]
Since $G$ is bipartite, $m,n$ are odd. Hence in $\cg(G)$ we may find two internally disjoint $uv$-paths
\[u,a_2,a_4,\dots,a_{m-1},v\]
and
\[u,b_2,b_4,\dots,b_{n-1},v.\]
We have proved $2$-connectivity of both components of $\cg(G)$, unless $m=n=1$ i.e., unless $u,a_1,v,b_1$ is a square in $G$. In this case, if $a_1$ (or $b_1$) has degree $3$ in $G$ and $c$ is its third neighbour, then $u,c,v$ is a path in $\cg(G)$. Else if both $a_1,b_1$ are only adjacent to $u,v$ in $G$, then we look at other $uv$-paths in $G$. One of them has to be of length greater than $2$ in $G$, since $G\not\simeq K(2,2),K(2,3)$.
\end{proof}

We are ready to prove one of the facts stated in Theorem \ref{thm:1}.
\begin{prop}\label{p:K2}
Let $G$ be a cubic polyhedron satisfying $\od(G^*)=\overline{K_2}$. Then $\cg(G)$ is a polyhedron.
\end{prop}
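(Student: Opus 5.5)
The plan is to obtain a plane embedding of $\cg(G)$ from a bipartite cubic plane graph, by passing to a branched double cover of the sphere. I would then prove $3$-connectivity using the criterion recalled in the introduction: a plane $2$-connected graph other than $K_3$ is a polyhedron if and only if any two regions sharing two vertices share them as an edge.

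\textbf{Structure of $\cg(G)$.} Since $G$ is cubic, $\cg(G)=\fcg(G)$. Its edges are exactly the pairs at distance two along a face of $G$. Each such pair has a unique common neighbour, because two vertices with two common neighbours would form a $4$-cycle, which in a cubic polyhedron would violate the face-intersection property. The natural candidate faces of $\cg(G)$ are of three kinds:
\begin{itemize}
\item a triangle $T_v$ on $N(v)$, for each $v\in V(G)$;
\item the two alternating $\ell$-gons inscribed in each even face of length $2\ell$;
\item the single closed "star" $(2\ell+1)$-cycle through all vertices of an odd face $\alpha$ of length $2\ell+1$, obtained by joining vertices two apart.
\end{itemize}
Let $\alpha,\beta$ be the two odd faces. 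I would place a branch point in the interior of each and take the double cover $\widetilde S\to S^2$ branched at these two points. By Riemann--Hurwitz, $\widetilde S$ is again a sphere. The preimage $\widetilde G$ of $G$ is a cubic plane graph on $2p$ vertices. Each even face lifts to two copies of itself, while $\alpha$ and $\beta$ lift to single faces of lengths $2(2\ell+1)$. So every face of $\widetilde G$ is even, and $\widetilde G$ is bipartite; it is $2$-connected, being a connected branched cover of a $2$-connected plane graph.

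The deck involution swaps the two colour classes. Indeed, the boundary walk of $\alpha$ has odd length and lifts to a path joining the two preimages of a single vertex. By Proposition \ref{p:p2cb} and Remark \ref{rem:bip}, $\fcg(\widetilde G)$ has two planar components, each induced on one colour class, and the involution maps one to the other. Projection restricted to one colour class is a bijection onto $V(G)$. It carries distance-two-on-a-face pairs of $\widetilde G$ exactly onto those of $G$, so each component is isomorphic to $\cg(G)$. The lifted odd face of length $4\ell+2$ contributes an alternating $(2\ell+1)$-gon, which projects to the star cycle of $\alpha$. Hence $\cg(G)$ is planar and $2$-connected, with the face set described above.

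\textbf{Simplicity.} I must check that $\cg(G)$ has no repeated edges and no loops; the construction of $\widetilde G$ already uses that $\alpha\ne\beta$. A repeated edge would mean two faces of $G$ each containing $u,w$ at distance two. That is excluded because the common neighbour of $u,w$ is unique and faces of $G$ meet in at most an edge. Here the hypothesis that $\alpha,\beta$ are non-adjacent should enter: if they shared an edge $xy$, the local picture of $\widetilde G$ near that edge would collapse.

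\textbf{$3$-connectivity (expected main obstacle).} I would verify the face-intersection criterion for each pair of face types:
\begin{itemize}
\item $T_v\cap T_w$: this has at least two vertices only if $v,w$ have two common neighbours, which is impossible as above.
\item $T_v$ against an inscribed polygon or star cycle of a face $f$: two neighbours of $v$ on $f$ force $v\in f$, and then those two neighbours are consecutive on the inscribed polygon, i.e.\ adjacent in $\cg(G)$.
\item Inscribed polygons or star cycles of two distinct faces $f,g$ of $G$: these meet in at most the two endpoints of a common edge of $f,g$, which are at distance one on $f$. So they lie on different alternating polygons of $f$, and any single inscribed polygon meets the other face in at most one vertex.
\end{itemize}
The delicate case is the star cycles of $\alpha$ and $\beta$. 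These contain all vertices of their faces, so they could share two vertices if $\alpha$ and $\beta$ shared an edge. This is precisely where the hypothesis $\od(G^*)=\overline{K_2}$ (rather than $K_2$, which Lemma \ref{le:cuodd} excludes anyway) is used. I would first try to settle every case by this pairwise check. If some case resists, I would fall back on a direct Menger argument: lift pairs of internally disjoint paths of $G$ to alternate-vertex paths, as in the proof of Proposition \ref{p:p2cb}, and add a third path through the triangles $T_v$.
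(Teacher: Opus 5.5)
Your route is genuinely different from the paper's and it works. However, one assertion in it is false, and it must be repaired before the $3$-connectivity check is correct.

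\textbf{The false assertion.} Two vertices at distance two need not have a unique common neighbour: opposite corners of a quadrangular face have two. The pentagonal prism satisfies $\od(G^*)=\overline{K_2}$ and has five such faces.

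What is true is this: if $v,w$ have two common neighbours $a,b$, then $[v,a,w,b]$ is a face. Indeed, the faces through $a,v,b$ and through $a,w,b$ share $a,b$, which are non-adjacent since $G\not\simeq K_4$, so these faces coincide.

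Two corrections follow.
\begin{enumerate}
\item Your face list is wrong for $\ell=2$. The inscribed $2$-gons of a square are digons, which vanish in the simple graph $\cg(G)$. The faces are therefore the $p$ triangles $T_v$, the two alternating polygons of each even face of length at least $6$, and the two star cycles. This agrees with Euler's formula and $|E(\cg(G))|=3p-2f_4$.
\item Your first bullet is wrong as stated, since $T_v\cap T_w=\{a,b\}$ does occur. It is harmless, though: once the digon collapses, $T_v$ and $T_w$ are adjacent along the edge $ab$, so the criterion still holds.
\end{enumerate}

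Your second bullet has a similar slip: $v\notin f$ is possible, namely when $[a,v,b]$ is a triangle sharing the edge $ab$ with $f$. Then $a,b$ are consecutive on $f$, so they lie on no common alternating polygon if $f$ is even. If $f$ were odd, two odd faces would be adjacent, which is excluded.

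This case, together with the disjointness of the two star cycles, is where the non-adjacency of $\alpha,\beta$ is used. It plays no role in the covering step, where nothing collapses, and it is automatic anyway by Lemma~\ref{le:cuodd}.

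\textbf{Comparison.} The paper deletes the edges $u_iv_i$ crossed by a shortest $\alpha\beta$-path in $G^*$ and applies Proposition~\ref{p:p2cb} to the resulting bipartite graph $G'$. It then re-inserts the missing edges of $\cg(G)$ between the components $G_1,G_2$ by a case analysis of cyclic orders along $\rho_1,\rho_2$.

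If you place your branch cut along that same dual path, $\widetilde G$ is two copies of $G'$ joined crosswise by the lifts of the $u_iv_i$. A colour class of $\widetilde G$ is then $V(G_1)$ in one sheet together with $V(G_2)$ in the other. So your cover performs the paper's re-insertion automatically. In fact $\widetilde G\cong G\times K_2$, since a cycle of $G$ is odd exactly when it separates $\alpha$ from $\beta$.

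What your approach buys:
\begin{itemize}
\item planarity, $2$-connectivity and the full face structure of $\cg(G)$ in one stroke;
\item a reduction of $3$-connectivity to a local face-intersection check, more explicit than the paper's brief final step;
\item an explanation, via Riemann--Hurwitz, of why exactly two odd faces is the natural planar case.
\end{itemize}

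What the paper's approach buys: it is elementary and works inside a subgraph of $G$. Its evenisation machinery is also reused for Lemma~\ref{le:even}, the converse direction of Theorem~\ref{thm:1}, and Proposition~\ref{p:K4}. In that last case four branch points would produce a torus, and the covering trick no longer yields planarity.
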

\begin{proof}
Let $\alpha,\beta$ be the odd faces of $G$. They correspond to vertices of $G^*$ that we will also label $\alpha,\beta$. In $G^*$, we take an $\alpha\beta$-path of minimal length
\begin{equation}
	\label{eq:path}
\alpha=:\varphi_1,\varphi_2,\varphi_3,\dots,\varphi_{n},\beta=:\varphi_{n+1},\qquad n\geq 2
\end{equation}
($n\geq 2$ since $\alpha,\beta$ cannot be adjacent, as $\od(G^*)=\overline{K_2}$). In the plane immersion of $G$, \eqref{eq:path} correspond to faces (that we will also label in this way) such that $\varphi_i,\varphi_{i+1}$ are adjacent for every $1\leq i\leq n$. We denote by $u_iv_i$ the common edge of $\varphi_i,\varphi_{i+1}$, assigning these vertex labels so that $u_{i-1},u_{i},v_{i},v_{i-1}$ appear in this order around the face $\varphi_i$, for $2\leq i\leq n$. An illustration may be found in Figure \ref{f:k2a}. Note that by minimality of the length of \eqref{eq:path}, for every $1\leq j,k\leq n+1$ such that $|k-j|\geq 2$, the faces $\varphi_j$ and $\varphi_{k}$ cannot be adjacent, thus as $G$ is cubic, they are disjoint, so that   $u_{1},v_{1},u_2,v_2,\dots,u_{n},v_{n}$ are pairwise distinct.
\begin{figure}[ht]
	\centering
	\begin{subfigure}{0.48\textwidth}
		\centering
		\includegraphics[width=4.cm]{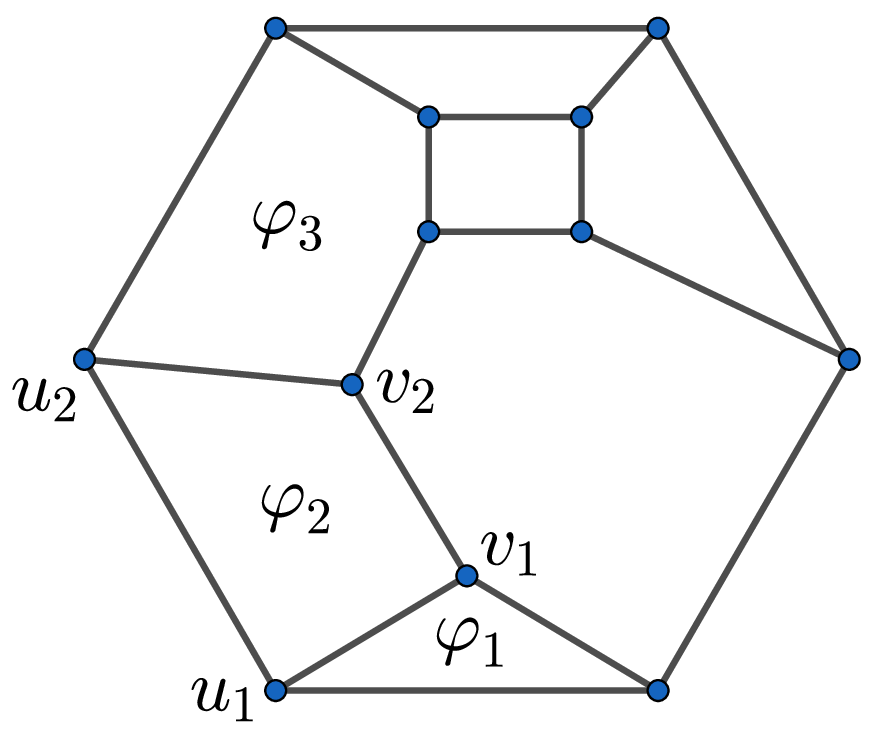}
		\caption{$G$.}
		\label{f:k2a}
	\end{subfigure}
	\begin{subfigure}{0.48\textwidth}
		\centering
		\includegraphics[width=5.cm]{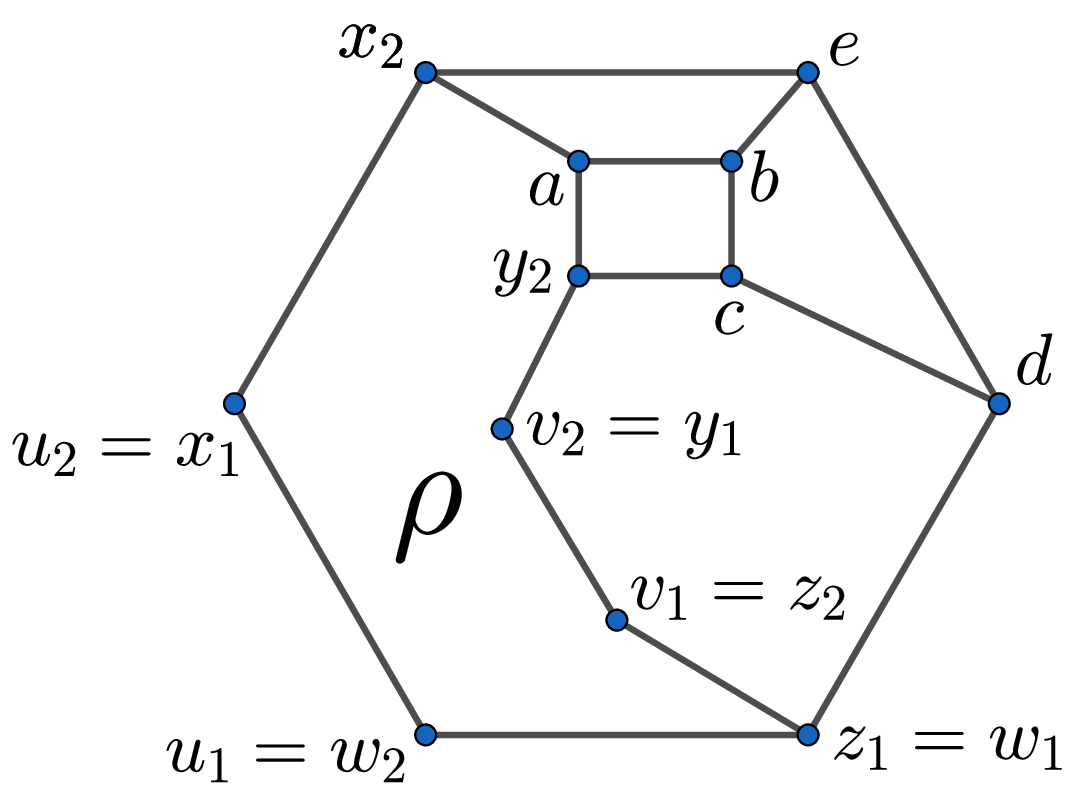}
		\caption{$G'$.}
		\label{f:k2b}
	\end{subfigure}
	\begin{subfigure}{0.48\textwidth}
		\centering
		\includegraphics[width=7.5cm]{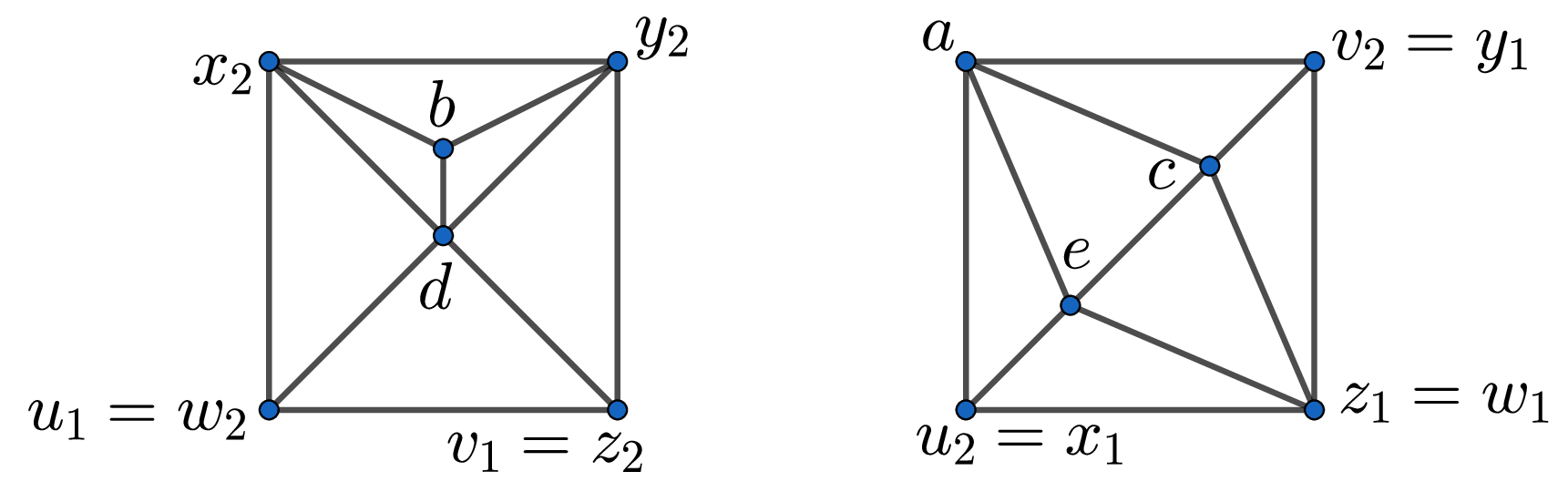}
		\caption{$\cg(G')=G_1\ \dot\cup\ G_2$.}
		\label{f:k2c}
	\end{subfigure}
	\begin{subfigure}{0.48\textwidth}
		\centering
		\includegraphics[width=7.5cm]{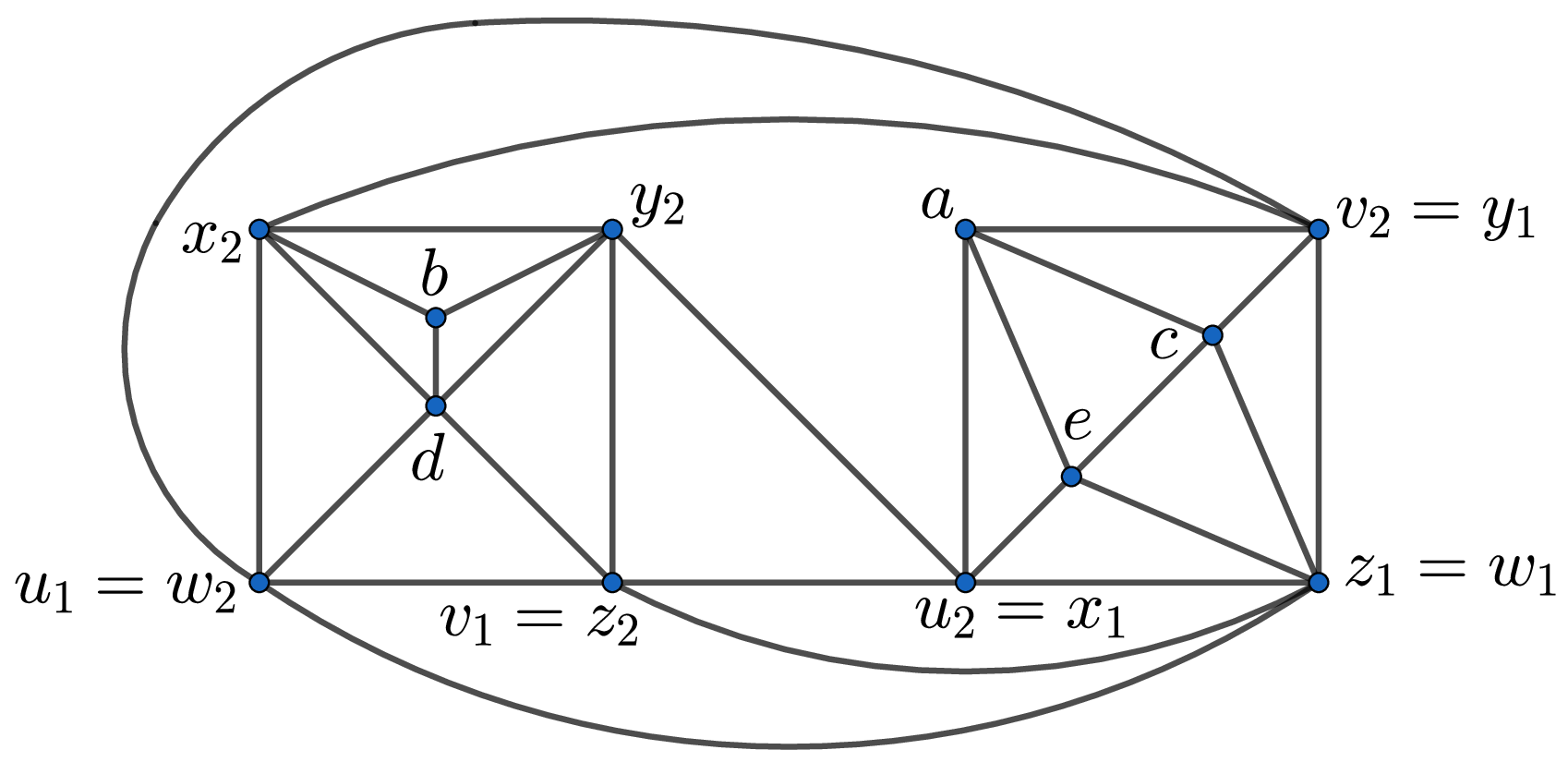}
		\caption{$\cg(G)$.}
		\label{f:k2d}
	\end{subfigure}
	\caption{Illustration of Proposition \ref{p:K2}.}
	\label{f:k2abcd}
\end{figure}

Let
\[G':=G-u_1v_1-u_2v_2-\dots-u_nv_n\]
(Figure \ref{f:k2b}). Clearly $G'$ is a plane graph. By construction, its regions correspond to faces of $G$, save for a region that we will denote by $\rho$, that contains the vertices
\begin{equation}
	\label{eq:uv}
u_1,u_2,\dots,u_n,v_n,v_{n-1},\dots,v_1
\end{equation}
in this order (clockwise, say). If we show  that $\rho$ is bounded by a cycle, it will follow that every region of $G'$ is bounded by a cycle, thus $G'$ is $2$-connected. To construct $G'$ starting from $G$, we delete $u_1v_1,u_2v_2,\dots,u_nv_n$ in turn. At each step, $u_iv_i$ is the common edge of two regions
\[\varphi_i=[u_i,v_i,a_1,a_2,\dots,a_A], \qquad A\geq 1\]
and
\[\varphi_{i+1}=[u_i,v_i,b_1,b_2,\dots,b_B], \qquad B\geq 1,\]
with $A,B$ of opposite parity. Once we delete $u_iv_i$, the resulting new region of the obtained graph is bounded by the cycle
\[a_1,a_2,\dots,a_A,b_B,b_{B-1},\dots,b_1.\]
To see this, we are using the fact that $\varphi_{i+1}$ is disjoint from each of $\varphi_1,\dots,\varphi_{i-1}$, for every $2\leq i\leq n$. Therefore, $\rho$ is indeed bounded by a cycle, thus $G'$ is $2$-connected.

Further, as $\alpha,\beta$ are the only odd faces of $G$, then in $G'$ each region is bounded by a cycle of even length, thus $G'$ is bipartite. Since $\alpha,\beta$ are not adjacent in $G$, then $\rho$ has length at least $6$. In particular, $G'\not\simeq K(2,2),K(2,3)$. We also note that $\Delta(G')\leq\Delta(G)=3$.

We are in a position to apply Proposition \ref{p:p2cb}, hence $\cg(G')$ has exactly two connected components $G_1,G_2$, each of which is a planar, $2$-connected graph. For $j=1,2$, the graph $G_j$ contains a region $\rho_j$ consisting of alternating elements of $\rho$ taken in order.

We sketch $G_1,G_2$ in the plane with $\rho_1,\rho_2$ external, as in Figure \ref{f:k2c}. Since $\varphi_2,\varphi_3,\dots,\varphi_{n}$ are even faces in $G$, then in $\cg(G')$ we have $u_i\in V(G_1)$ if and only if $v_i\in V(G_1)$, for every $1\leq i\leq n$. To obtain $\cg(G)$, it now suffices to add the edges
\begin{equation}
	\label{eq:wxyz}
u_iy_i,\ u_iz_i,\ v_iw_i,\ v_ix_i, \qquad 1\leq i\leq n,
\end{equation}
where $w_i,x_i$ are the neighbours of $u_i$ along $\rho$, and $y_i,z_i$ are the neighbours of $v_i$ along $\rho$ ($x_1$ the clockwise neighbour of $u_i$ and $z_1$ the clockwise neighbour of $v_i$, say). The reader may refer to Figure \ref{f:k2d}.
\begin{figure}
	\centering
	
\end{figure}

Let us show that $\cg(G)$ is a planar graph. W.l.o.g., $u_1,v_1\in V(G_1)$. Then $w_1,x_1,y_1,z_1\in V(G_2)$. We draw the edges \eqref{eq:wxyz} for $i=1$. This can be done in a planar way, since $w_1,x_1,y_1,z_1$ appear in this order around the boundary of $\rho_2$ (possibly $w_1=z_1$, if $\alpha$ is a triangular face of $G$). Next, we draw in turn the edges \eqref{eq:wxyz} for $2\leq i\leq n$. Either \[u_{i-1},u_i,v_i,v_{i-1}\]
appear in this order around $\rho_1$ and
\[w_{i-1},x_{i-1},w_{i},x_{i},y_{i},z_{i},y_{i-1},z_{i-1}\]
appear in this order around $\rho_2$ (possibly $x_{i-1}=w_i$, and possibly $z_{i}=y_{i-1}$), or 
\[u_{i-1},w_{i},x_{i},y_{i},z_{i},v_{i-1}\]
appear in this order around $\rho_1$ and
\[w_{i-1},x_{i-1},u_i,v_i,y_{i-1},z_{i-1}\]
appear in this order around $\rho_2$ (possibly $u_{i-1}=w_{i}$ and $x_{i-1}=u_i$ if $u_{i-1},u_i$ are consecutive along $\rho$, and possibly $y_{i-1}=v_{i}$ and $v_{i-1}=z_{i}$ if $v_{i-1},v_i$ are consecutive along $\rho$), as in Figure \ref{f:k2d}. In either case, it is possible to add all of the edges \eqref{eq:wxyz} to $\cg(G')$ successively in a planar way, thus $\cg(G)$ is indeed a planar graph.

Furthermore, since the two connected components $G_1,G_2$ of $\cg(G')$ are $2$-connected and $\cg(G)$ is obtained from $\cg(G')$ by adding the distinct edges
\[u_1y_1,\ u_1z_1,\ v_1w_1,\ v_1x_1,\ u_2y_2,\ v_2x_2,\]
it follows that $\cg(G)$ is $3$-connected.
\end{proof}

We now prove another one of the facts stated in Theorem \ref{thm:1}.
\begin{prop}\label{p:K4}
	Let $G$ be a cubic polyhedron satisfying $\od(G^*)=K_4$. Then $\cg(G)$ is a polyhedron.
\end{prop}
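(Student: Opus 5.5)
The plan mirrors the proof of Proposition \ref{p:K2}: delete a set of edges of $G$ to reach a bipartite plane graph $G'$, apply Proposition \ref{p:p2cb}, and then add back the missing congraph edges while preserving planarity. Let $\alpha,\beta,\gamma,\delta$ be the four odd faces of $G$. They are pairwise adjacent, since $\od(G^*)=K_4$. In a cubic polyhedron two adjacent faces share exactly one edge. The natural choice is to delete edges $e_{\alpha\beta}$ and $e_{\gamma\delta}$, where $e_{\alpha\beta}$ is the common edge of $\alpha,\beta$ and $e_{\gamma\delta}$ that of $\gamma,\delta$.

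\textbf{Step 1: construct $G'$.} Set $G'=G-e_{\alpha\beta}-e_{\gamma\delta}$. Deleting $e_{\alpha\beta}$ merges $\alpha,\beta$ into one region of even length $|\alpha|+|\beta|-2$. Likewise deleting $e_{\gamma\delta}$ merges $\gamma,\delta$ into one even region. I must check these merged regions are bounded by cycles. Since $G$ is a polyhedron, two faces share at most one edge and no other vertex, so the union of $\alpha$ and $\beta$ minus their common edge is a cycle. There is a subtlety when $e_{\gamma\delta}$ touches the merged region $\alpha\cup\beta$. The vertices of $e_{\gamma\delta}$ can lie on $\alpha$ or $\beta$, because $\gamma$ is adjacent to $\alpha$ and $\beta$. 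However, $e_{\gamma\delta}$ itself is not an edge of $\alpha$ or $\beta$, since each edge lies on exactly two faces. So the second deletion merges two regions distinct from the first merged one, and both endpoints keep degree $2$. A short check shows the new region is a cycle, because its two constituent faces share only the edge $e_{\gamma\delta}$. All regions of $G'$ are then even, so $G'$ is bipartite. It is $2$-connected and has $\Delta(G')\le 3$. Moreover $G'\not\simeq K(2,2),K(2,3)$, since the merged regions have length at least $4$ and $G$ has at least four faces plus the others. Proposition \ref{p:p2cb} therefore gives $\cg(G')=G_1\,\dot\cup\,G_2$, both planar and $2$-connected.

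\textbf{Step 2: add back the missing edges.} Write $e_{\alpha\beta}=u_1v_1$ and $e_{\gamma\delta}=u_2v_2$. Then $\cg(G)$ is obtained from $\cg(G')$ by adding edges from $u_i$ to the two other neighbours of $v_i$, and from $v_i$ to the two other neighbours of $u_i$, for $i=1,2$. These new edges run between $G_1$ and $G_2$. Parity shows that $u_i,v_i$ lie in the same component. The four added edges at $u_1v_1$ can be drawn exactly as for $i=1$ in Proposition \ref{p:K2}: the merged region $\rho$ of $\alpha\cup\beta$ gives faces $\rho_1\ni u_1,v_1$ and $\rho_2$ containing their neighbours in the right cyclic order. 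The same holds for the region $\sigma$ of $\gamma\cup\delta$, giving faces $\sigma_1,\sigma_2$.

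\textbf{Main obstacle: doing both insertions simultaneously.} The edges at $u_1v_1$ require $G_1$ and $G_2$ to be glued along the faces coming from $\rho$. Those at $u_2v_2$ require gluing along the faces coming from $\sigma$. These are different face pairs, so the two components cannot simply be placed side by side. My approach is to use the adjacencies $\alpha\gamma,\alpha\delta,\beta\gamma,\beta\delta$. Because $\rho$ and $\sigma$ touch along these, the relevant faces of $G_1$ and $G_2$ should share vertices, which allows a common embedding. An alternative I would try first, if it is cleaner, is to choose the deleted edges differently, for instance a path $\alpha,\beta$ and $\gamma,\delta$ forming a single merged region after a suitable sequence of deletions. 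This would reduce the argument to the single-region picture of Proposition \ref{p:K2}. For example, deleting the three edges $e_{\alpha\beta},e_{\beta\gamma},e_{\gamma\delta}$ gives one region of parity odd+odd+odd+odd, hence even. I would then verify, as in Proposition \ref{p:K2}, that the region is a cycle and that the insertions proceed in order along it. The final step is $3$-connectivity: adding at least three distinct edges between the two $2$-connected components yields a $3$-connected planar graph, and hence a polyhedron.
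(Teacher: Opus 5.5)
Your Step 1 and the appeal to Proposition~\ref{p:p2cb} match the paper. The proof stops, however, exactly at what you call the main obstacle, and neither remedy you suggest resolves it. Every new edge joins $G_1$ to $G_2$. So in any planar drawing of $\cg(G)$, $G_2$ lies in a single face of $G_1$ and vice versa. You therefore need immersions of $G_1$ and $G_2$ with one face each that carries all relevant endpoints in compatible cyclic order.

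The immersion inherited from $G$ does not give this. The edges at $u_1v_1$ attach along the faces coming from $\alpha\cup\beta$, while those at $u_2v_2$ attach along the faces coming from $\gamma\cup\delta$. Noting that these faces share some vertices does not make them one face.

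The missing idea is that $G'$ has $2$-cuts, so its plane immersion is not unique and must be re-chosen.
\begin{itemize}
\item The $K_4$ on $\alpha,\beta,\gamma,\delta$ in $G^*$ splits $V(G)$ into four nonempty pieces: the vertices of $G$ inside each triangle of that $K_4$.
\item Any two pieces are joined by exactly one edge of $G$, namely the common edge of the two odd faces they share.
\item After deleting $u_1v_1$ and $u_2v_2$, the pieces form a cycle linked by single edges, with $u_1,u_2,v_1,v_2$ (up to swapping $u_2,v_2$) in consecutive pieces.
\item Flipping, where necessary, the pieces containing $u_1$ and $v_1$ gives an immersion of $G'$ in which $u_1,u_2,v_1,v_2$ lie in this order on a single region $\rho$.
\end{itemize}
This is precisely the paper's step (there, $a,c,b,d$ on $\rho$). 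The alternating vertices of $\rho$ then give regions $\rho_1,\rho_2$ of $G_1,G_2$. These carry the four vertices and the $G'$-neighbours of their former partners in matching cyclic orders, with two cases according to whether $u_1$ and $u_2$ lie in the same component. All eight missing edges are then drawn in the annulus between $\rho_1$ and $\rho_2$.

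Your fallback of deleting $e_{\alpha\beta},e_{\beta\gamma},e_{\gamma\delta}$ fails outright. Once $e_{\alpha\beta}$ is gone, the region $\alpha\cup\beta$ shares both $e_{\alpha\gamma}$ and $e_{\beta\gamma}$ with $\gamma$. Deleting $e_{\beta\gamma}$ then leaves $e_{\alpha\gamma}$ with the same region on both sides, so it becomes a bridge. Dually, contracting $\alpha\beta$ and $\beta\gamma$ in $G^*$ turns $\alpha\gamma$ into a loop. The resulting graph is not $2$-connected, the merged region is not bounded by a cycle, and Proposition~\ref{p:p2cb} no longer applies. The analogy with Proposition~\ref{p:K2} breaks because there the path in $G^*$ was shortest, so non-consecutive faces were disjoint; here $\alpha$ and $\gamma$ are adjacent.

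Two smaller points:
\begin{itemize}
\item For $G=K_4$ you get $G'=C_4\simeq K(2,2)$, so your exclusion of $K(2,2),K(2,3)$ is wrong there. The paper handles this case directly via $\cg(K_4)\simeq K_4$.
\item The principle that adding at least three distinct edges between two $2$-connected components yields a $3$-connected graph is false in general. Two long cycles joined by three disjoint edges still have vertices of degree $2$. The final step needs an argument tied to the actual structure. The paper first shows, by a case analysis on whether one of the odd faces is a triangle, that four of the added edges can be chosen with eight pairwise distinct endpoints, and only then appeals to the $2$-connectivity of $G_1,G_2$.
\end{itemize}
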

\begin{proof}
Let $\alpha,\beta,\gamma,\delta$ be the four odd faces of $G$, $ab$ the common edge of $\alpha,\beta$, and $cd$ the common edge of $\gamma,\delta$, as in Figures \ref{f:k4ga} (general construction) and \ref{f:k4a} (specific example). Then
\[G':=G-ab-cd\]
is a planar, $2$-connected, bipartite graph. Now $\cg(K_4)\simeq K_4$, and henceforth we take $G\not\simeq K_4$, thus $G'$ is different from $K(2,2),K(2,3)$. By Proposition \ref{p:p2cb}, $\cg(G')$ has exactly two connected components $G_1,G_2$, each of which is a planar, $2$-connected graph.
\begin{figure}[h!]
	\centering
	\begin{subfigure}{0.48\textwidth}
		\centering
		\includegraphics[width=4.cm]{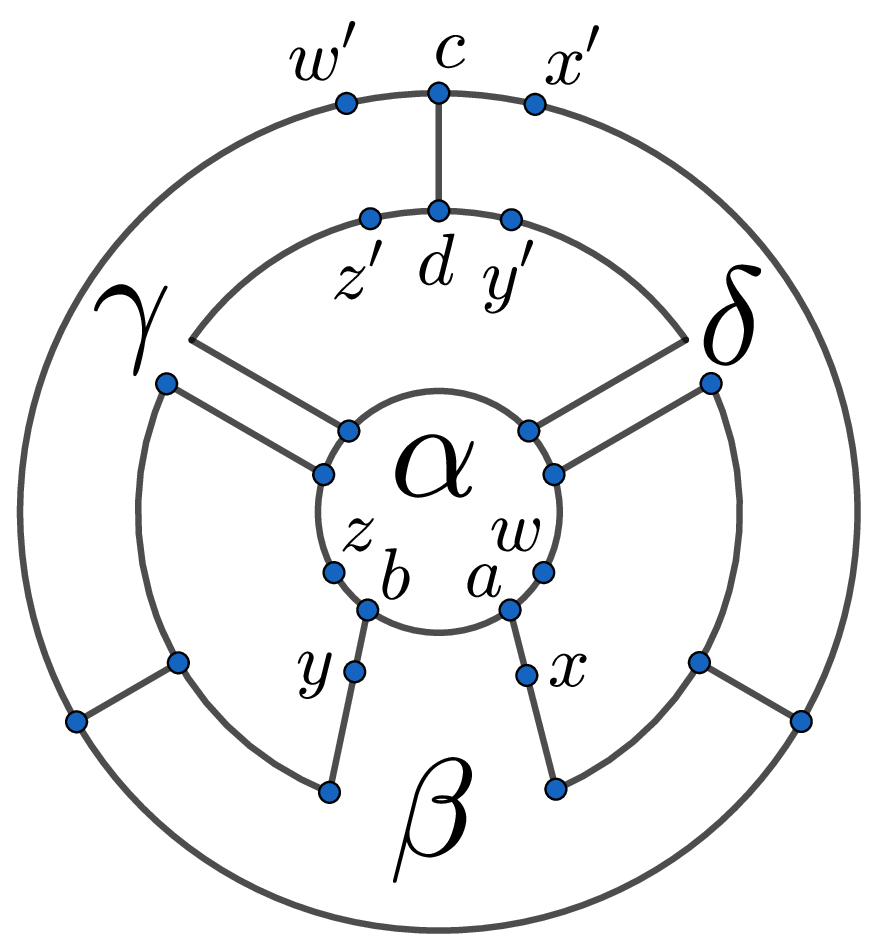}
		\caption{$G$.}
		\label{f:k4ga}
	\end{subfigure}
	\begin{subfigure}{0.48\textwidth}
		\centering
		\includegraphics[width=5.cm]{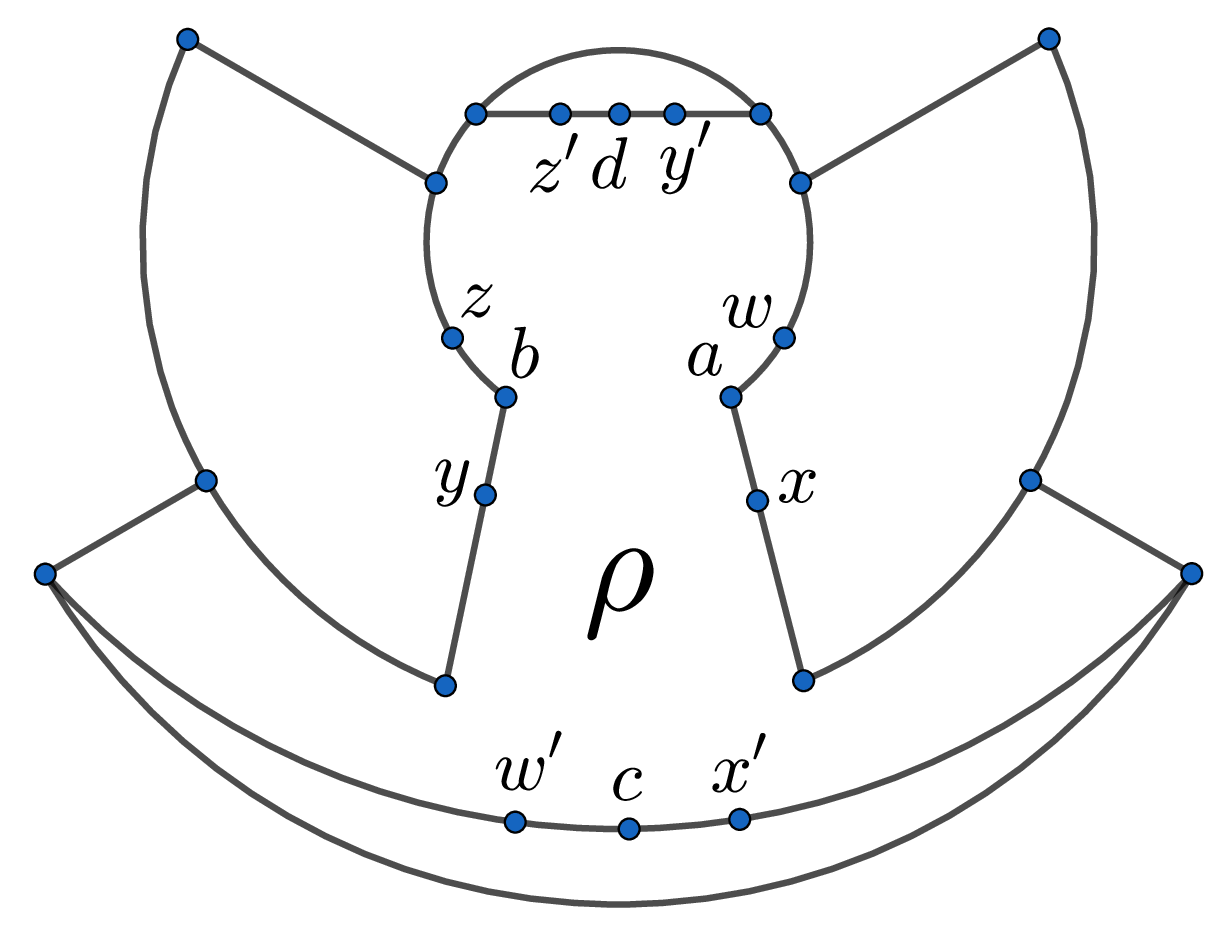}
		\caption{Immersion of $G'$ with $a,c,b,d$ consecutive along the boundary of a region $\rho$.}
		\label{f:k4gb}
	\end{subfigure}
	\caption{Construction for Proposition \ref{p:K4}.}
	\label{f:k4g}
\end{figure}
\begin{figure}[h!]
	\centering
	\begin{subfigure}{0.48\textwidth}
		\centering
		\includegraphics[width=4.cm]{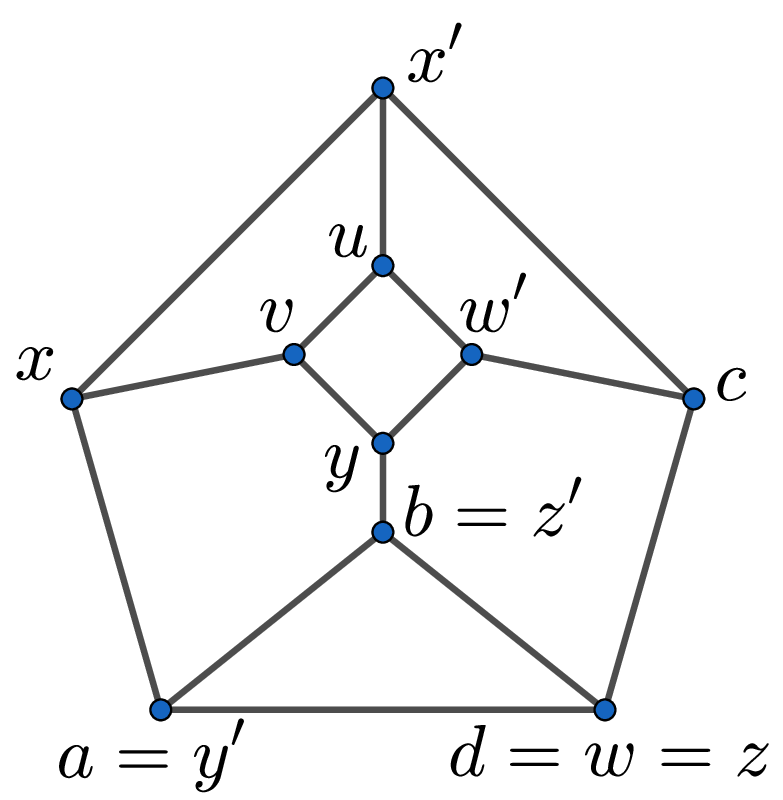}
		\caption{$G$.}
		\label{f:k4a}
	\end{subfigure}
	\begin{subfigure}{0.48\textwidth}
		\centering
		\includegraphics[width=4.cm]{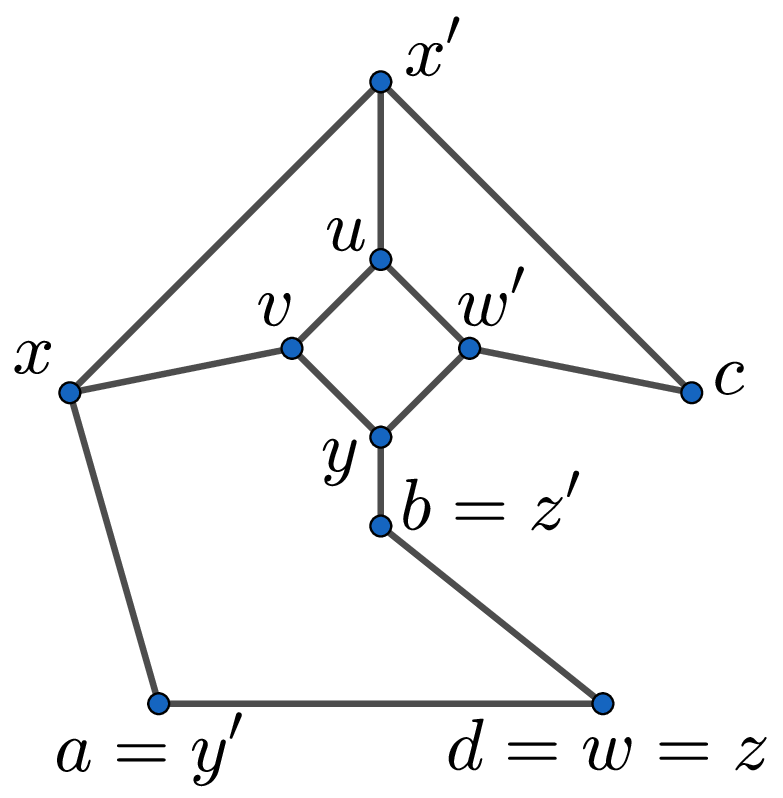}
		\caption{$G'$.}
		\label{f:k4b}
	\end{subfigure}
	\begin{subfigure}{0.48\textwidth}
		\centering
		\includegraphics[width=5.5cm]{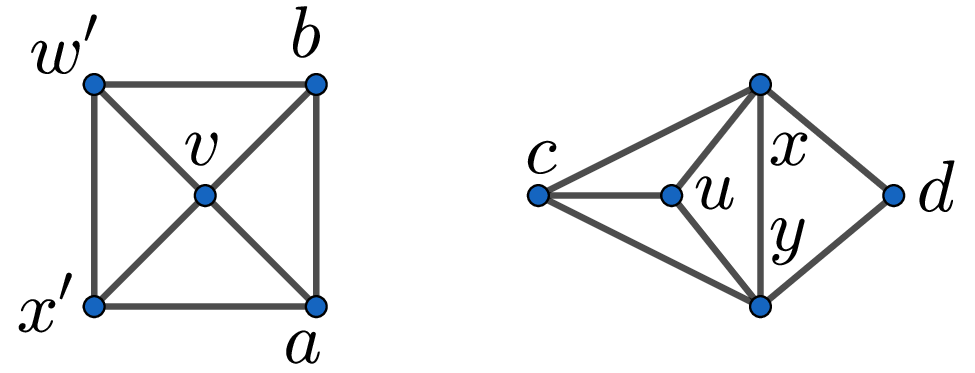}
		\caption{$\cg(G')=G_1\ \dot\cup\ G_2$.}
		\label{f:k4c}
	\end{subfigure}
	\begin{subfigure}{0.48\textwidth}
		\centering
		\includegraphics[width=5.5cm]{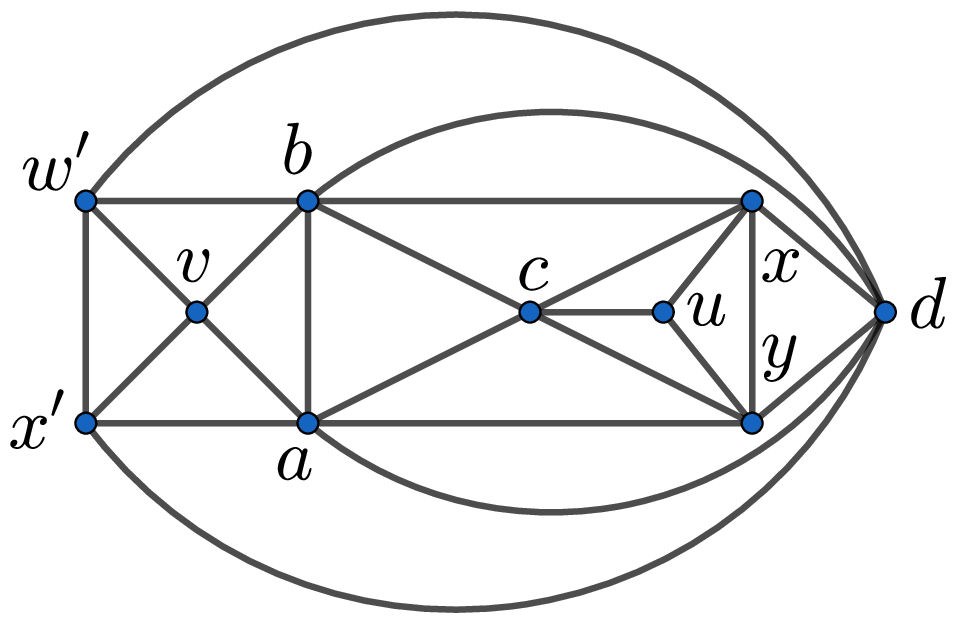}
		\caption{$\cg(G)$.}
		\label{f:k4d}
	\end{subfigure}
	\caption{Illustration of Proposition \ref{p:K4}.}
	\label{f:k4abcd}
\end{figure}

We take a planar immersion of $G'$ such that
\begin{equation*}
a,c,b,d
\end{equation*}
appear in this order along a region $\rho$, as in Figures \ref{f:k4gb} and \ref{f:k4b}. We write $N_G(a)=\{b,w,x\}$, $N_G(b)=\{a,y,z\}$, $N_G(c)=\{d,w',x'\}$, $N_G(d)=\{c,y',z'\}$ where in each case the neighbours appear in clockwise order around the vertex. We are also assuming w.l.o.g.\ that $\beta,\gamma,\delta$ appear in this clockwise order around $\alpha$, that $b$ is the clockwise neighbour of $a$ on $\alpha$, and $d$ the clockwise neighbour of $c$ on $\gamma$ in the polyhedron $G$.

Hence we may sketch $G_1,G_2$ in the plane so that either the external region $\rho_1$ of $G_1$ contains in this order
\[a,c,b,d\]
and the external region $\rho_2$ of $G_2$ contains in this order
\[x,w,y',z',z,y,w',x',\]
or $\rho_1$ contains in this order
\[a,x',w',b,z',y'\]
and $\rho_2$ contains in this order
\[x,w,d,z,y,c\]
(this is the case in Figure \ref{f:k4c}). It now remains to draw the edges
\begin{equation}
	\label{eq:12v}
ay,\ az,\ cy',\ cz',\ bw,\ bx,\ dw',\ dx'
\end{equation}
(Figure \ref{f:k4d}). In both of the above scenarios, these edges may be drawn in a planar way. It follows that $\cg(G)$ is a planar graph.

It remains to prove that $\cg(G)$ is $3$-connected. We claim that one may always find $S\subseteq\{w,x,y,z\}$ and $S'\subseteq\{w',x',y',z'\}$ such that $|S|=|S'|=2$ and
\[S\cup S'\cup\{a,b,c,d\}\]
are eight pairwise distinct vertices. Then the $3$-connectivity of $\cg(G)$ will follow from the $2$-connectivity of $G_1,G_2$, combined with the fact that we add \eqref{eq:12v} to $\cg(G')=G_1\ \dot\cup\ G_2$ to obtain $\cg(G)$.

To prove the claim, we start by remarking that $a,b,c,d$ are pairwise distinct. Indeed, if $a=c$ say, then $a$ would belong to $\alpha,\beta,\gamma,\delta$, impossible since $G$ is cubic. We now distinguish between two cases. If $\alpha$ is a triangular face, then $\alpha=[a,b,d]$, $a=y'$, $b=z'$, and $d=w=z$ (as in Figure \ref{f:k4a}). Now a cubic polyhedron (other than $K_4$) never has two adjacent triangular faces, due to $3$-connectivity. Thus $\beta,\gamma,\delta$ all have length at least five, so that $x,a,b,y$ are distinct consecutive vertices on the boundary of $\beta$, $y,b,d,c,w'$ are distinct and consecutive on $\gamma$, and $x,a,d,c,x'$ are distinct and consecutive on $\delta$. Hence
\[
x,\ y,\ w',\ x',\ a,\ b,\ c,\ d
\]
are pairwise distinct, as claimed.

In the other case, all of $\alpha,\beta,\gamma,\delta$ have length at least five. Then $w,x,y,z,a,b$ are pairwise distinct. If $c,d$ are distinct from $w,x,y,z$, the claim is proven. Assume that $d=w$, say, as in Figure \ref{f:k4e}. As $G$ is cubic, two of the neighbours of $d$ (including $a$) lie on $\alpha$. Since $cd$ is the common edge of $\gamma$ and $\delta$, then $c$ cannot lie on $\alpha$. It follows that $a=y'$, and along the boundary of $\alpha$ we find the distinct consecutive vertices $z,b,a,d,z'$. Similarly, since $G$ is cubic, along the boundary of $\delta$ we find the distinct consecutive vertices $x,a,d,c,x'$. The common edge of $\alpha$ and $\delta$ is $ad$, and the common edge of $\beta$ and $\delta$ is $ax$, so that in particular $c\neq x,y,z,z'$ and $x'\neq b,x,y,z,z'$. It follows that 
\[
x,\ y,\ x',\ z',\ a,\ b,\ c,\ d
\]
are pairwise distinct, as claimed.
\begin{figure}[ht]
	\centering
\includegraphics[width=6cm]{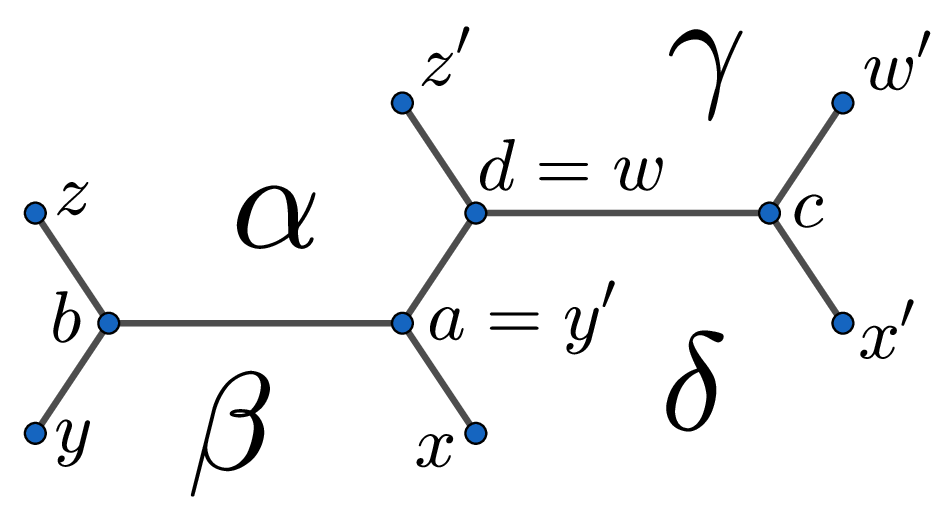}
\caption{Situation when $\alpha,\beta,\gamma,\delta$ are of length at least five, and $d=w$.}
\label{f:k4e}
\end{figure}

\end{proof}

\subsection{Proof of Theorem \ref{thm:1}, \ref{(a)}$\Rightarrow$\ref{(c)}}
Recall that $\co$ is the set of odd faces of a polyhedron $G$.

\begin{defin}
	\label{def:even}
	Given a cubic polyhedron $G$, we may associate to it a planar, bipartite, spanning subgraph $G'$, that we will call the {\em evenisation} of $G$, in the following way. We begin by considering a partition of $\co$ into pairs
	\begin{equation}
		\label{eq:parti}
		\{\om_1,\om_2\},\ \{\om_3,\om_4\}, \dots, \{\om_{|\co|-1},\om_{|\co|}\}
	\end{equation}
	such that the total distance between the corresponding vertices in the dual $G^*$ (labelled in the same fashion)
	\begin{equation}
		\label{eq:sum}
		\sum_{\substack{i=1\\i\text{ odd}}}^{|\co|}\dist(\om_i,\om_{i+1})
	\end{equation}
	is minimised. We then take $|\co|/2$ paths
	\[\om_i=\varphi_{i,1},\varphi_{i,2},\dots,\varphi_{i,n_i+1}=\om_{i+1}, \qquad n_i\geq 1,\quad 1\leq i\leq |\co|,\quad i\text{ odd}\]
	of minimal length. For each of the  paths, we delete from $G$ the common edge between every pair of faces corresponding to consecutive vertices along the path, obtaining $G'$.
\end{defin}

\begin{lemma}
	\label{le:even}
For every cubic polyhedron $G$, the evenisation $G'$ is a $2$-connected graph.
\end{lemma}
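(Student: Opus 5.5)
We will show that every region of the plane graph $G'$ is bounded by a cycle, and then invoke \cite[Proposition 4.2.5]{dieste}. This is the same strategy as in the proof of Proposition \ref{p:K2}, now carried out for several paths simultaneously. The approach is through the dual. Let $D\subseteq E(G)$ be the set of deleted edges, and $D^*\subseteq E(G^*)$ the corresponding dual edges. Then $D^*$ is the edge set of the union $P$ of the $|\co|/2$ minimal paths in $G^*$.

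The regions of $G'=G-D$ correspond to the connected components of the subgraph of $G^*$ with edge set $D^*$, together with the untouched faces. Specifically, a region of $G'$ is the union of the faces of $G$ lying in one component $K$ of $(V(G^*),D^*)$. Such a region is bounded by a cycle whenever two conditions hold: $K$ is a tree, and merging the faces of $K$ never creates a pinch vertex. For a cubic $G$, a pinch occurs precisely when two faces of the region share a vertex $v$ without $v$ being interior to the merged disc. Since each vertex $v$ of $G$ lies on exactly three faces, this amounts to the dual triangle around $v$ having exactly two of its three edges in $D^*$ in a way that disconnects the local picture. Equivalently, a pinch means two faces of $K$ are adjacent or share a vertex in $G$ while not being joined inside $K$ through the faces at that vertex.

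\textbf{Step 1: $D^*$ is a forest (acyclic).} Suppose the union of the minimal paths contains a cycle $Z$ of $G^*$. I would use minimality of the total distance \eqref{eq:sum} by an exchange argument. Take the symmetric difference of the edge multiset of the paths with $E(Z)$, or reroute along the two arcs of $Z$. This yields a set of paths pairing the same odd faces (or a re-pairing) with total length no larger, and strictly smaller when edges cancel. The cleaner version uses the standard fact that a minimum $T$-join ($T=\co$) in a graph is a forest. The union of the chosen paths, counted mod $2$, is a $\co$-join, and minimality of \eqref{eq:sum} makes it a minimum $\co$-join. A minimum $\co$-join is a forest with no repeated edges, because a cycle could be removed and a repeated edge could be dropped in pairs, reducing the length. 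So the paths are edge-disjoint and their union is a forest.

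\textbf{Step 2: no pinching.} Consider a component tree $K$ and two faces $\varphi,\psi\in K$ sharing a vertex $v$ (or an edge) in $G$, with $\varphi\psi\notin D^*$ and no route through the third face at $v$. I would show this contradicts minimality. The faces $\varphi$ and $\psi$ are then at dual distance $1$, since in a cubic polyhedron two faces sharing a vertex share an edge. So the tree path from $\varphi$ to $\psi$ in $K$, of length $\geq 2$, could be shortcut by the edge $\varphi\psi$. Replacing $D^*$ by its symmetric difference with the cycle formed by the tree path plus $\varphi\psi$ gives another $\co$-join. Its length is strictly smaller unless the tree path has length exactly $2$. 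In that case the middle face is the third face at $v$, and the merged region is locally fine at $v$ with no pinch. Hence every region is a disc bounded by a cycle, and $G'$ is $2$-connected.

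\textbf{Main obstacle.} I expect Step 2 to be the hardest part. It requires translating "a region is bounded by a cycle" into a precise local condition at each vertex of a cubic graph, and checking that the length-$2$ exceptional case really causes no pinch. A further subtlety is that the $T$-join reformulation must be reconciled with the paper's definition. If \eqref{eq:sum} is minimised but edges of different paths overlap, then the deletions coincide, and the mod-$2$ join is strictly shorter, which contradicts minimality. I would state this explicitly first.
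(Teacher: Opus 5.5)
Your overall framework is sound: the chosen paths form a minimum $\co$-join, which is a forest, and you exploit minimality by exchanging along cycles of $G^*$. Your corrected argument even covers a case the paper leaves implicit (see the end). But Step 2 fails at exactly the configuration that matters.

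\textbf{The error in Step 2.} Suppose $\varphi,\psi$ lie in one component $K$, are adjacent in $G^*$, and $\varphi\psi\notin D^*$. Then the tree path $P$ joining them has length $L\ge 2$. The set $D^*\triangle(E(P)\cup\{\varphi\psi\})$ is a $\co$-join with $|D^*|-L+1\le |D^*|-1$ edges. So the exchange is strictly shorter also when $L=2$; there is no exceptional case.

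The case you declare harmless is in fact fatal. Let the middle face $\chi$ be the third face at $v$. Then $\varphi\chi,\chi\psi\in D^*$, so two of the three edges at $v$ are deleted and $\deg_{G'}(v)=1$. The region formed by $\varphi,\chi,\psi$ then has a pendant edge in its boundary walk, and $G'$ is not $2$-connected. This is precisely the configuration the paper excludes explicitly, by re-pairing $\omega_{i+1}$ with $\omega_{i'}$. Your own first paragraph names it as a pinch (``exactly two of its three edges in $D^*$''). Yet in Step 2 your hypothesis ``no route through the third face at $v$'' sets it aside, and you then call it locally fine. As written, the proposal is inconsistent and never rules out a degree-$1$ vertex.

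\textbf{How to repair it.} The fix comes from your own exchange with the correct count. Every edge of $G^*$ joining two faces of the same component lies in $D^*$, so each component $K$ induces a subtree of $G^*$. Two consequences follow:
\begin{enumerate}
\item At each vertex $v$ at most one of the three edges is deleted. Two deleted edges would force the third edge of the facial dual triangle at $v$ into $D^*$, creating a cycle in the forest.
\item Any two faces of $K$ meeting at $v$ share an edge at $v$, because $G$ is cubic. By inducedness that edge is deleted, so the two faces are glued along it.
\end{enumerate}
Moreover, every undeleted edge on a face of $K$ has its other face outside $K$. Hence the boundary walk of each region visits each vertex and edge at most once, so it is a cycle, and $2$-connectivity follows.

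\textbf{Comparison with the paper.} The paper excludes the degree-$1$ case separately. It then asserts that the vertices in \eqref{eq:AB} are pairwise distinct because $G''$ is acyclic. Your corrected global exchange instead handles uniformly both the degree-$1$ case and the case where two faces of one component share an undeleted edge. That second case would make the shared edge a bridge of $G'$, and it does not follow from acyclicity alone.
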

\begin{proof}
Taking the $|\co|/2$ paths in $G^*$ as in Definition \ref{def:even}, we denote by $G''$ the subgraph of $G^*$ generated by the edges of these paths. By minimality of the total length of the paths \eqref{eq:sum}, $G''$ contains no cycles. We delete from $G$ the common edge of
\begin{equation}
	\label{eq:com}
\varphi_{i,j},\varphi_{i,j+1},\qquad 1\leq j\leq n_i,\quad 1\leq i\leq |\co|,\quad i\text{ odd}.
\end{equation}
One cannot have $u\in V(G)$ lying on $\varphi_{i,j},\varphi_{i',j'},\varphi_{i,j+1}=\varphi_{i',j'+1}$, which would result in $\deg_{G'}(u)=1$. Indeed, in this scenario one could permute $\omega_{i+1},\omega_{i'}$ in \eqref{eq:parti}, resulting in a smaller sum \eqref{eq:sum}.

We claim that, as each edge \eqref{eq:com} is eliminated in turn, in the resulting graph each region is always bounded by a cycle. Indeed, when eliminating an edge $uv$, the regions that contain it
\[[u,v,a_1,a_2,\dots,a_A],\qquad A\geq 1\]
and
\[[u,v,b_1,b_2,\dots,b_B],\qquad B\geq 1,\]
with $A,B$ of opposite parity, merge to form the new region
\begin{equation}
	\label{eq:AB}
	[a_1,a_2,\dots,a_A,b_B,b_{B-1},\dots,b_1]
\end{equation}
(cf.\ the proof of Proposition \ref{p:K2} for the case $|\co|=2$). As $G''$ contains no cycles, the vertices in \eqref{eq:AB} are pairwise distinct. Hence as we remove the edges from $G$, at each step the graph stays $2$-connected.
\end{proof}

We are ready to complete the proof of Theorem \ref{thm:1}.

\begin{proof}[Proof of Theorem \ref{thm:1}, \ref{(a)}$\Rightarrow$\ref{(c)}]
Suppose that $\cg(G)$ is planar and connected. As established in the first paragraph of Proposition \ref{p:p2cb}, since $\cg(G)$ is connected, then $G$ is not bipartite. Therefore, $|\od(G^*)|>0$ and due to the handshaking lemma, in fact $|\od(G^*)|\geq 2$. By Lemma \ref{le:cuodd}, $\od(G^*)\not\simeq K_2$. In the rest of this proof, we will show that either $\od(G^*)\simeq\overline{K_2}$ or $\od(G^*)\simeq K_4$.

We construct the evenisation $G'$ of $G$ as in Definition \ref{def:even}. We shall denote the edges deleted to obtain $G'$ as
\begin{equation}
\label{eq:uivi}
u_iv_i,\qquad 1\leq i\leq n,\quad n\geq 2.
\end{equation}
By Lemma \ref{le:even}, $G'$ is a plane, $2$-connected, bipartite, spanning subgraph of $G$ (in particular, $u_1,v_1,u_2,v_2,\dots,u_n,v_n$ are pairwise distinct vertices). By Proposition \ref{p:p2cb}, $\cg(G')$ is a planar graph with exactly two connected components, both of which are $2$-connected.

To construct $\cg(G)$, we add to $\cg(G')$ the edges
\begin{equation}
	\label{eq:wxyz2}
	u_iy_i,\ u_iz_i,\ v_iw_i,\ v_ix_i, \qquad 1\leq i\leq n,
\end{equation}
where $w_i,x_i$ are the neighbours of $u_i$ and $y_i,z_i$ the neighbours of $v_i$ in $G'$ (to be precise, in the region $\rho_i$ of $G'$ containing $u_i,v_i$, the vertex $x_i$ is the clockwise neighbour of $u_i$ and $z_i$ the clockwise neighbour of $v_i$). Since $\cg(G)$ is planar, there exists a plane immersion of $\cg(G')$ such that
\begin{equation}
	\label{eq:uvwxyz}
u_i,\ v_i,\ w_i,\ x_i,\ y_i,\ z_i,\qquad 1\leq i\leq n
\end{equation}
all belong to the same region. Now since $G'$ is bipartite, there exists a plane immersion of $G'$ such that \eqref{eq:uvwxyz} all belong to the same region.

Let \[\om_1,\ \om_2,\ \om_3,\ \om_4,\ \om_5,\ \om_6\] be odd faces of $G$ as in \eqref{eq:parti}. We may assume w.l.o.g.\ that the endpoints of the edges \eqref{eq:uivi} are labelled so that, when we construct $G'$ from $G$,
\[u_1v_1,u_2v_2,\dots,u_mv_m\]
are the deleted edges pertaining to the $\om_1\om_2$-path in $G^*$, and $m$ is maximised. Let $u_hv_h$, $u_kv_k$ be any deleted edges pertaining to the $\om_3\om_4$-path and $\om_5\om_6$-path respectively (thus $m<h\neq k\leq n$).

We claim that in the plane immersion of $G'$, the vertices
\begin{equation}
\label{eq:order}
u_1,\ u_h,\ u_k,\ z_1,\ v_1,\ z_2,\ v_2,\ z_h,\ z_k,\ x_2,\ u_2,\ x_1
\end{equation}
appear w.l.o.g.\ in this order around the boundary of a region $\rho$. To prove the claim, it suffices to show that $u_1,u_h,v_1,v_h$ appear in this order around $\rho$ (Figure \ref{f:1hab}, left). Indeed, if the arrangement were $u_1,v_1,u_h,v_h$ as in Figure \ref{f:1hcd}, left, then bearing in mind that $G$ is cubic, $G$ would have a plane immersion with $u_1,v_1,u_h,v_h$ on the same face (Figure \ref{f:1hcd}, right), contradicting the uniqueness of plane immersion for a polyhedron. We also observe that $u_1,u_h,v_1,v_h$ cannot be arranged as in Figure \ref{f:1he}, since $G'$ has a plane immersion such that these four vertices all lie on one region $\rho$.

\begin{figure}[ht]
	\centering
	\begin{subfigure}{0.34\textwidth}
		\centering
		\includegraphics[width=2.5cm]{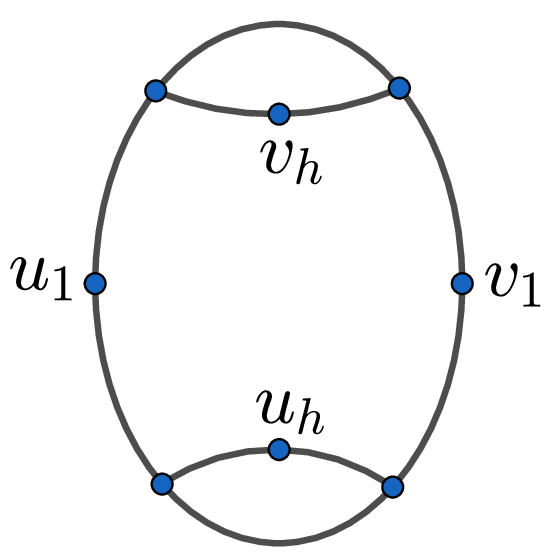}
		\includegraphics[width=2.5cm]{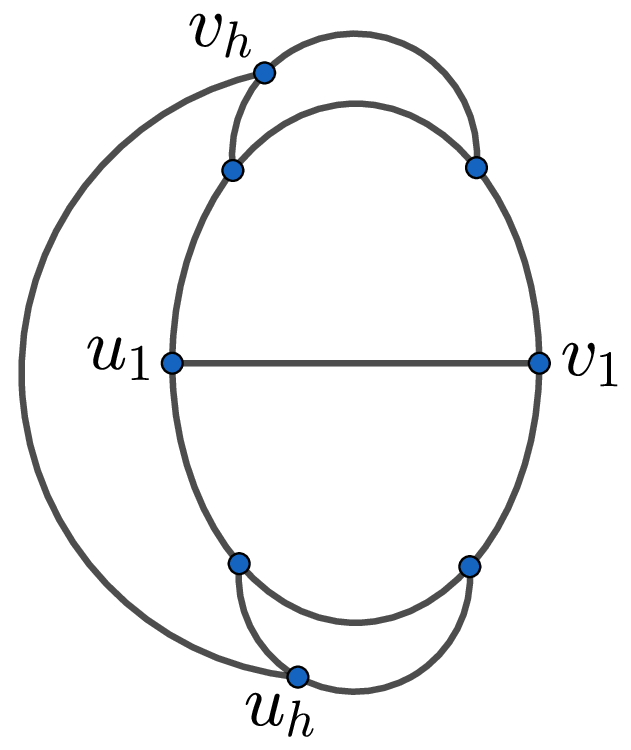}
		\caption{The only possible arrangement in $G'$ (left) and $G$ (right).}
		\label{f:1hab}
	\end{subfigure}
	\hspace{0.75cm}
	\begin{subfigure}{0.34\textwidth}
		\centering
		\includegraphics[width=2.5cm]{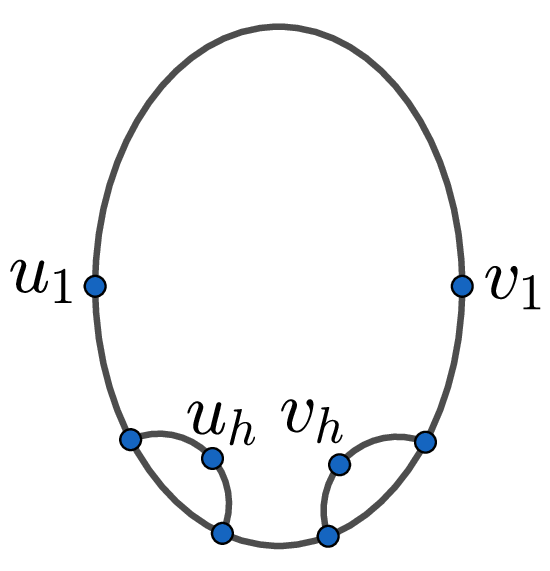}
		\includegraphics[width=2.5cm]{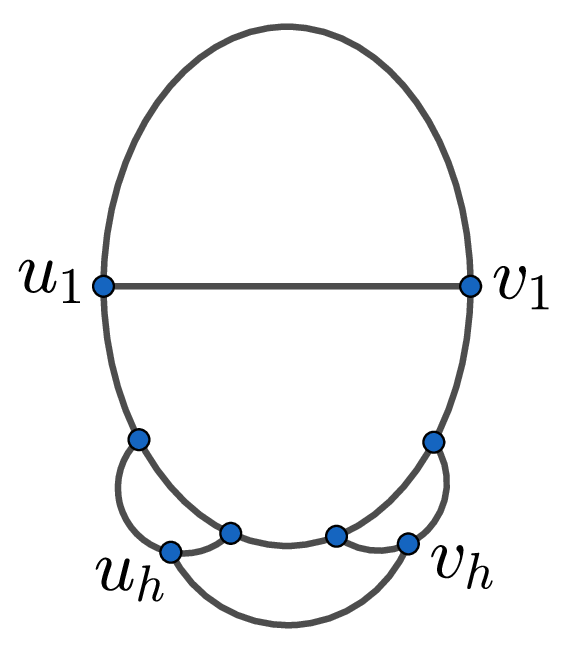}
		\caption{An impossible arrangement in $G'$ (left) and $G$ (right).}
		\label{f:1hcd}
	\end{subfigure}
	\hspace{0.75cm}
	\begin{subfigure}{0.19\textwidth}
		\centering
		\includegraphics[width=2.5cm]{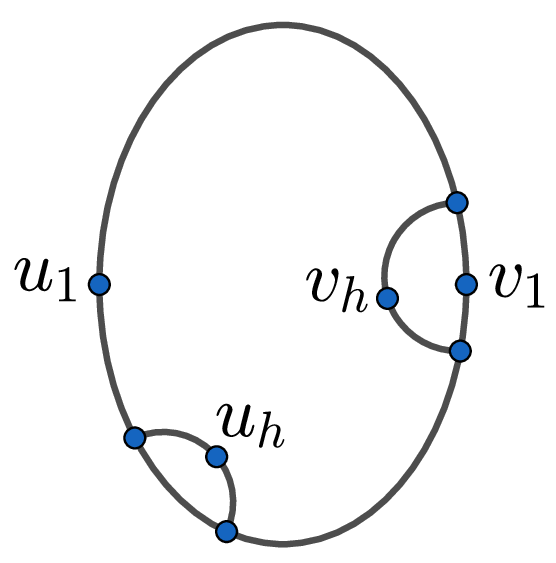}
		\caption{An impossible arrangement in $G'$.}
		\label{f:1he}
	\end{subfigure}
	\caption{$u_1,u_h,v_1,v_h$ must be ordered in this way around $\rho$.}
	\label{f:1h}
\end{figure}

Hence indeed $\rho$ contains the vertices \eqref{eq:order} in this order. Accordingly, in one connected component of $\cg(G')$, there is a region containing
\[u_1',\ u_h',\ u_k',\ v_1',\ v_2',\ v_h',\ v_k',\ u_2'\]
in this order, and in the other component of $\cg(G')$ there is a region containing
\[x_1',\ x_k',\ x_h',\ z_1',\ z_2',\ z_k',\ z_h',\ x_2'\]
in this order, where for $i=1,2,h,k$, either $u_i'=u_i$, $v_i'=v_i$, $x_i'=x_i$ and $z'_i=z_i$, or $u_i'=x_i$, $v_i'=z_i$, $x_i'=u_i$, and $z_i'=v_i$. Therefore, the edges
\[u_1z_1,\ u_hz_h,\ u_kz_k\]
cannot all be added to $\cg(G')$ in a planar way. It follows that the vertices with index $k$ do not appear, thus $|\co|\leq 4$.

Similarly, the edges
\[u_1z_1,\ v_1x_1,\ u_2z_2,\ v_2x_2,\ u_hz_h\]
cannot all be added to $\cg(G')$ in a planar way. It follows that either the vertices with index $h$ do not appear, thus $|\co|\leq 2$, and we have already seen that this implies $\od(G^*)\simeq\overline{K_2}$, or $m=1$ so that the vertices with index $2$ do not appear, thus $|\co|=4$. 

In the latter case, we then know that $\om_1,\om_2$ are adjacent, and that $\om_3,\om_4$ are adjacent. We sketch $G',G$ as in Figure \ref{f:1hfg} (cf.\ Figure \ref{f:1hab}), where $v_h=s_{S'}$ and $u_h=t_{T'}$ for some $1\leq S'\leq S$ and $1\leq T'\leq T$. Aside from the edge $u_hv_h$, for $1\leq i\leq S$ and $1\leq j\leq T$ there cannot be any $s_it_j$-paths outside of the faces $\om_1,\om_2$ in Figure \ref{f:1hg}, otherwise $u_1,u_h,v_1,v_h$ would not lie on the same region $\rho$ of $G'$ (Figure \ref{f:1hf}). Thus \[s_1,s_2,\dots,s_{S'}=v_h,t_{T'}=u_h,t_{T'+1},\dots,t_T\in V(\om_3)\]
and
\[t_1,t_2,\dots,t_{T'}=u_h,s_{S'}=v_h,s_{S'+1},\dots,s_S\in V(\om_4)\]
(or vice versa). Furthermore, since $G$ is cubic, $s_0,t_{T+1}\in V(\om_3)$ and $s_{S+1},t_0\in V(\om_4)$, thus $\om_1$ is adjacent to $\om_3$ and to $\om_4$, and also $\om_2$ is adjacent to $\om_3$ and to $\om_4$. Thereby, $\od(G^*)\simeq K_4$, and the proof of Theorem \ref{thm:1} is complete.
\begin{figure}[ht]
\centering
\begin{subfigure}{0.48\textwidth}
	\centering
	\includegraphics[width=3.5cm]{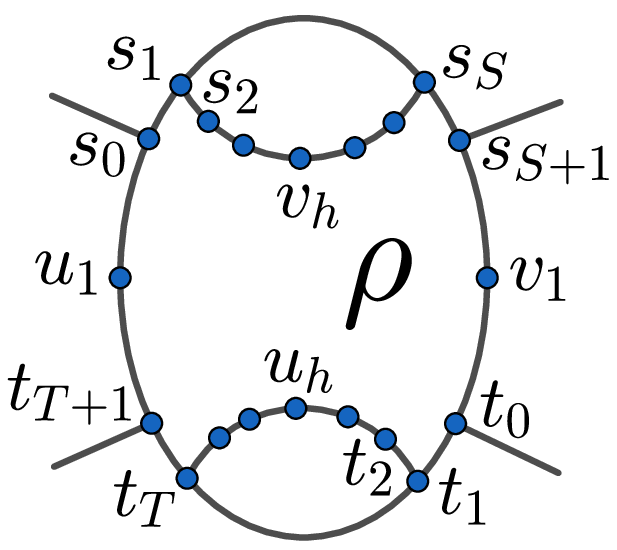}
	\caption{$G'$.}
	\label{f:1hf}
\end{subfigure}
\begin{subfigure}{0.48\textwidth}
	\centering
	\includegraphics[width=3.5cm]{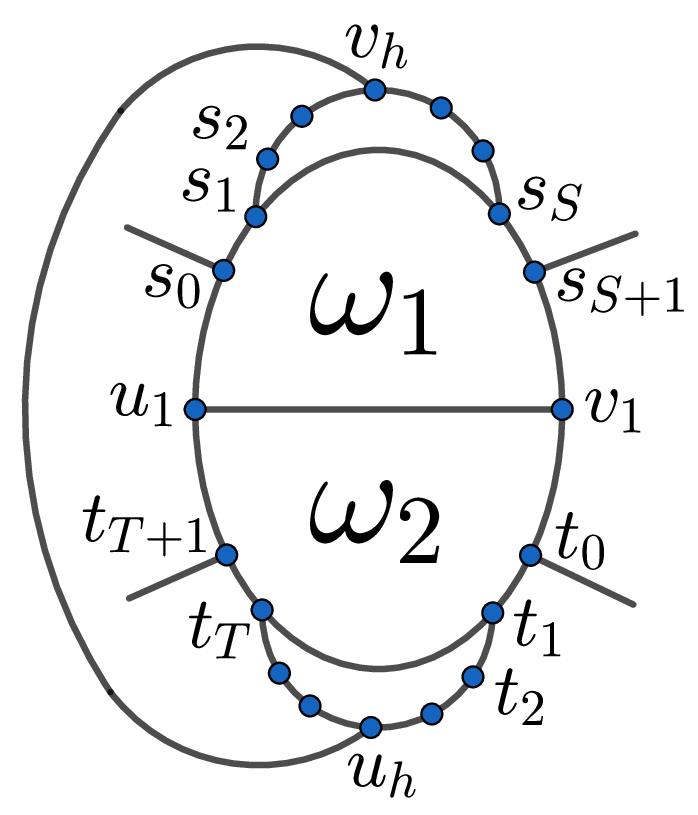}
	\caption{$G$.}
	\label{f:1hg}
\end{subfigure}
\caption{If $|\co|=4$ then $\od(G^*)\simeq K_4$.}
\label{f:1hfg}
\end{figure}

\end{proof}

\section{Polyhedra of maximal degree $4$: proof of Theorems \ref{thm:2} and \ref{thm:0}}
\label{sec:4}

\subsection{Preparatory results}
We begin by formally stating and proving a result mentioned in the introduction.
\begin{lemma}
	\label{le:5}
Let $G$ be a graph such that $\Delta(G)\geq 5$. Then $\cg(G)$ is non-planar.
\end{lemma}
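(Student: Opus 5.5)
The proof is a direct application of the definition of $\cg(G)$ together with Kuratowski's Theorem, as already sketched in the introduction. Since $\Delta(G)\geq 5$, we fix a vertex $u\in V(G)$ with $\deg_G(u)\geq 5$ and choose five distinct neighbours $u_1,u_2,u_3,u_4,u_5\in N_G(u)$. We may do this because $G$ is simple, so its neighbours are genuinely distinct vertices.

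For every pair $1\leq i<j\leq 5$, the vertices $u_i$ and $u_j$ are distinct and share the common neighbour $u$ in $G$. By the definition \eqref{eq:con} of the congraph, this gives $u_iu_j\in E(\cg(G))$. No loops arise, since we only consider distinct pairs. Hence the subgraph of $\cg(G)$ induced by $\{u_1,\dots,u_5\}$ is complete; in other words, $\cg(G)$ contains a subgraph isomorphic to $K_5$.

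Finally, $K_5$ is non-planar, and planarity is inherited by subgraphs. Equivalently, by Kuratowski's Theorem a graph containing $K_5$ as a subgraph, which is trivially a subdivision of $K_5$, cannot be planar. Therefore $\cg(G)$ is non-planar.

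There is no genuine obstacle in this argument. The only point requiring a moment's care is that the vertex $u$ itself is not among the $u_i$. This ensures that all ten edges $u_iu_j$ come from the common neighbour $u$, and not from some adjacency that might fail.
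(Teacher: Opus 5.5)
Your proof is correct and takes the same approach as the paper. The neighbours of a vertex of degree $d\geq 5$ are pairwise adjacent in $\cg(G)$ through their common neighbour, so $\cg(G)$ contains $K_5$ (indeed $K_d$) as a subgraph and is therefore non-planar.
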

\begin{proof}
Let $x$ be a vertex of degree $d\geq 5$. The neighbours of $x$ are pairwise adjacent in $\cg(G)$, hence $\cg(G)$ contains a subgraph isomorphic to $K_d$, thus $\cg(G)$ is not planar.
\end{proof}

The following result is about the effect on $\cg(G)$ of contracting edges in $G$.
\begin{lemma}
	\label{le:cont}
Let $U\geq 3$ be an integer, and $G$ be a graph containing the path 
\[x,u_1,u_2,\dots,u_U,y=u_{U+1}\]
where
\[\deg_G(u_1)=\deg_G(u_2)=\dots=\deg_G(u_{U})=2.\]
If $i$ is an odd integer in the range $3\leq i\leq U$, and
\[G':=G-u_{2}-u_{3}-\dots-u_i+u_{1}u_{i+1},\]
then $\cg(G')$ is a contraction of $\cg(G)$, so that in particular $\cg(G)$ and $\cg(G')$ are homeomorphic.
\end{lemma}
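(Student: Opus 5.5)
The plan is to describe $\cg(G)$ explicitly near the path $x,u_1,\dots,u_U,y$ and to compare it with $\cg(G')$ near the shortened path $x,u_1,u_{i+1},\dots,u_U,y$.

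\textbf{Step 1: the edges of $\cg(G)$ at the path vertices.} Since $\deg_G(u_j)=2$ for $1\le j\le U$, the only common-neighbour relations involving the $u_j$ come from the path itself. Concretely:
\begin{itemize}
\item $u_j u_{j+2}\in E(\cg(G))$ for $1\le j\le U-1$ (with $u_{U+1}=y$), and $x u_2\in E(\cg(G))$;
\item $u_1$ is moreover adjacent in $\cg(G)$ to every vertex of $N_G(x)\setminus\{u_1\}$;
\item $u_U$ is moreover adjacent in $\cg(G)$ to every vertex of $N_G(y)\setminus\{u_U\}$;
\item there are no other edges at $u_1,\dots,u_U$.
\end{itemize}
Thus the vertices $x,u_2,u_4,\dots$ form one path in $\cg(G)$ and $u_1,u_3,u_5,\dots$ form another, interleaved along the original path.

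\textbf{Step 2: the same description for $\cg(G')$.} The graph $G'$ contains the path $x,u_1,u_{i+1},u_{i+2},\dots,u_U,y$ with the same degree-$2$ property. Hence $\cg(G')$ is described by the same rules with the index set $\{1,i+1,\dots,U\}$: $x u_{i+1}$ and $u_1 u_{i+2}$ are edges, and $u_1$ is adjacent to $N(x)\setminus\{u_1\}$. All edges of $\cg$ not involving $u_2,\dots,u_i$ are unchanged, since no other common neighbourhoods are affected.

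\textbf{Step 3: exhibit the contraction.} Because $i$ is odd, $i-1$ is even, and the contraction is the following. In $\cg(G)$, contract the even chain $x,u_2,u_4,\dots,u_{i-1}$ onto $x$, and contract the odd chain $u_1,u_3,\dots,u_i$ onto $u_1$. The edge $u_{i-1}u_{i+1}$ becomes $xu_{i+1}$, and the edge $u_iu_{i+2}$ becomes $u_1u_{i+2}$. Comparing with Step 2, the result is exactly $\cg(G')$. Parity of $i$ is used precisely here: it guarantees that $u_{i+1}$ attaches to the $x$-chain and $u_{i+2}$ to the $u_1$-chain, which matches $G'$.

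\textbf{Step 4: homeomorphism.} Every contracted edge has an endpoint among $u_2,\dots,u_{i-1}$ (or $u_i$ when $i<U$). By Step 1 each such vertex has degree exactly $2$ in $\cg(G)$. So each contraction merely suppresses a subdivision vertex, and $\cg(G)$ is a subdivision of $\cg(G')$.

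\textbf{Main obstacle.} The delicate point is this degree-$2$ verification at the ends of the chains. When $i=U$, the vertex $u_i=u_U$ also carries the edges to $N(y)\setminus\{u_U\}$. In $G'$ these correspond to the edges from $u_1$ (now adjacent to $y$) to $N(y)\setminus\{u_1\}$, so the contraction statement survives. For homeomorphism, however, I would check this boundary case separately: either reorganise the contraction so that a genuinely degree-$2$ vertex is suppressed, or note that $U\ge 3$ and $i$ odd force $i<U$ in the applications. Degenerate coincidences, such as $y\in N(x)$ or $x=y$, also need to be checked to confirm they do not create extra or multiple edges.
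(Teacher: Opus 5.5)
Your Steps 1--3 are the paper's argument. The paper also records that $u_2,\dots,u_{U-1}$ have degree $2$ in $\cg(G)$. It obtains $\cg(G')$ by collapsing $x,u_2,\dots,u_{i-1}$ into $x$ (phrased as contracting the path $x,u_2,\dots,u_{i+1}$ to the edge $xu_{i+1}$) and identifying $u_1,u_3,\dots,u_i$. Your check that the contraction claim survives at $i=U$ is correct: the merged $u_1$ inherits $N_G(y)\setminus\{u_U\}$ from $u_U$. This holds with contraction understood up to deleting parallel edges.

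The gap is Step 4, and the obstacle you flag cannot be removed, because the homeomorphism clause is false in that case. Your first escape is wrong: $U\geq 3$ with $i$ odd allows $i=U$ (e.g.\ $U=i=3$). This is exactly the case Lemma \ref{le:3cut} needs when an even-length path is shortened to length $2$.

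Reorganising the contraction cannot help either. Take the tree $G$ with edges $xa,xb,xu_1,u_1u_2,u_2u_3,u_3y,yc,yd$ and $U=i=3$. Then $\cg(G)$ consists of the triangles $abu_1$ and $u_3cd$ joined by the edge $u_1u_3$, plus the path $x,u_2,y$. Here $G'$ has edges $xa,xb,xu_1,u_1y,yc,yd$, so $\cg(G')$ consists of the triangles $abu_1$ and $u_1cd$ sharing $u_1$, plus the edge $xy$. The latter has a vertex of degree $4$ and the former has no vertex of degree greater than $3$, so they are not homeomorphic.

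The degeneracy you mention also genuinely breaks homeomorphism for some $i<U$. If $i=U-1$ and $xy\in E(G)$, suppressing the degree-$2$ vertices creates parallel edges. For example, $G=C_6$ with $U=4$, $i=3$ gives $G'=C_4$; but $\cg(C_6)$ is two disjoint triangles while $\cg(C_4)$ is two disjoint edges.

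So your subdivision argument goes through when $i<U$, except when $i=U-1$ and $xy\in E(G)$. In general only the contraction statement is true. The paper's one-line proof has the same hole: its degree-$2$ observation stops at $u_{U-1}$, yet it identifies $u_1$ with $u_i$. This is harmless downstream, since Lemma \ref{le:3cut} uses only that $\cg(G'')$ is a minor of $\cg(G)$.
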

\begin{proof}
Note that by definition of common neighbourhood graph, one has
\[\deg_{\cg(G)}(u_2)=\deg_{\cg(G)}(u_3)=\dots=\deg_{\cg(G)}(u_{U-1})=2.\]
Also by the same definition, $\cg(G')$ is obtained from $\cg(G)$ by contracting the path
\[x,u_2,u_4,\dots,u_{i+1}\]
to the edge $xu_{i+1}$, and contracting 
the path
\[u_1,u_3,\dots,u_{i}\]
to a single vertex, which is obtained by identifying $u_{1}$ and $u_i$. 
\end{proof}

\begin{lemma}
	\label{le:3cut}
	Let $G$ be a polyhedron containing a vertex $x$ of degree $4$, and such that $\cg(G)$ is planar. Then $\{a,x,c\}$ is a $3$-cut in $G$, where $a,c$ are neighbours of $x$ that are not consecutive in the cyclic order.
\end{lemma}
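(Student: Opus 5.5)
We argue by contradiction and exhibit a $K_5$ or $K_{3,3}$ minor in $\cg(G)$. Let $a,b,c,d$ be the neighbours of $x$ in cyclic order around $x$. By definition of $\cg(G)$ they are pairwise adjacent there, so $\cg(G)\supseteq K_4$ on $\{a,b,c,d\}$. It therefore suffices to find a fifth ``branch set'' in $\cg(G)$ joined to all of $a,b,c,d$ by internally disjoint paths avoiding the $K_4$ (contracted if needed). We will build it around $x$ itself, whose $\cg$-neighbours are the vertices at distance two from $x$ in $G$.

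Suppose $\{a,x,c\}$ is not a $3$-cut. Then $G-\{a,x,c\}$ is connected, so there is a path $P$ in $G-\{a,x,c\}$ from $b$ to $d$. Since $G$ is $3$-connected, $G-x$ is $2$-connected. We will use this to choose the structure carefully, with $a$ and $c$ lying on opposite sides of the cycle $x,b,P,d,x$ in the plane immersion. Write $P: b=p_0,p_1,\dots,p_k=d$. From $P$ we get in $\cg(G)$ two paths of ``alternate'' vertices, $p_0,p_2,\dots$ and $p_1,p_3,\dots$. The one through $p_1$ starts at a $\cg$-neighbour of $x$ (as $p_1\sim b\sim x$), and the one ending near $d$ likewise reaches a $\cg$-neighbour of $x$ (namely $p_{k-1}$). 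Depending on the parity of $k$, one of these alternating sequences joins $b$ to a $\cg$-neighbour of $x$ and the other joins $d$ to one. Contracting the branch set $X=\{x\}\cup\{p_1,p_3,\dots\}$ (the alternate vertices adjacent in $\cg$ to $x$ at the endpoints), we intend $X$ to become adjacent in $\cg$ to both $b$ and $d$, and disjoint from $\{a,b,c,d\}$.

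Next we connect $X$ to $a$ and $c$. Here $x$ has common neighbours with $a$ only through $b$ or $d$ or faces around $a$, which collide with the $K_4$ edges. So instead we use $\cg$-edges $a$--$y$ with $y$ a neighbour of $b$ or $d$, or the second neighbours of $x$ through $a$: $x$ is $\cg$-adjacent to every vertex of $N(a)\setminus\{x\}$. Pick $a'\in N(a)\setminus\{x\}$ and $c'\in N(c)\setminus\{x\}$. Then $a'$ is $\cg$-adjacent to $x$, and $a'$ is $\cg$-adjacent to any other neighbour of $a$, but we need an edge to $a$ itself. A cleaner route is a $K_{3,3}$: take the parts $\{x,b',d'\}$ and $\{a,c,\cdot\}$. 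Because the $2$-connectivity of $G-x$ gives two disjoint routes, the fallback will be Lemma~\ref{le:cont}-style contractions of alternating paths.

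The main obstacle is the parity bookkeeping: a $G$-path yields two $\cg$-paths on alternating vertices, and which endpoint lands where depends on parity. Odd faces are exactly what merge the two classes. The case where $b$ and $d$ are reachable only through paths whose alternating halves hit $a$ or $c$ must be excluded using planarity of $G$, specifically the faces around $x$ containing $a,b$ and $b,c$, etc. I would first try the special configuration where $P$ is a shortest $bd$-path in $G-\{a,x,c\}$ and handle its length parity in two cases, producing explicitly a subdivided $K_5$ with centres $a,b,c,d$ and the contracted set $X$.
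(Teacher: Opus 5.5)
Your proposal has a genuine gap. It never constructs the minor, and the construction it aims at cannot work in general, precisely because of the parity issue you flag. You put $x$ in the fifth branch set. But a $\cg(G)$-path from $x$ to its neighbour $a$ comes from an even $x$--$a$ walk in $G$, and together with the edge $xa$ this gives an odd closed walk. So if $G$ is bipartite (e.g.\ any $3$-connected quadrangulation with a vertex of degree $4$, such as the rhombic dodecahedron), $\cg(G)$ splits along the two colour classes, and $x$ lies in a different component from $a,b,c,d$. Since a $K_5$- or $K_{3,3}$-minor is connected, no such minor can use $x$ together with any of $a,b,c,d$.

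The same problem already appears in your step with $P$. The alternating vertices give $\cg$-paths from $x$ to $b$ and to $d$ only if the cycle $x,b,P,d,x$ is odd. If it is even, $x,p_1,p_3,\dots,p_{k-1}$ closes into a $\cg$-cycle that need not be adjacent to $b$ or $d$, while $b,p_2,\dots,p_k=d$ is a separate $\cg$-path, and you have no control over this parity. The connections to $a$ and $c$ are never supplied: you observe that $x$ is $\cg$-adjacent to $N(a)\setminus\{x\}$ but not to $a$, then switch to an unspecified $K_{3,3}$ in which $x$ and $a$ are again in opposite parts. A Lemma \ref{le:loop}-type pair of odd cycles meeting only at $x$, one through $b,d$ and one through $a,c$, also cannot exist, since by planarity $x,b,P,d,x$ separates $a$ from $c$.

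The paper never puts $x$ into a branch set. It takes $y$ at maximal distance from $x$ and three internally disjoint $x$--$y$ paths, and calls $d$ the neighbour of $x$ they miss. So the non-consecutive pair $\{a,c\}$ is selected by the construction rather than prescribed. Your hypothesis that a fixed pair fails commits you to a stronger statement than the existential one actually used in Lemma \ref{le:main}. Planarity forces a path from $d$ avoiding $x,y$ to first meet $P_a$ or $P_c$, say at $z=a_i$. If $i\ge 2$, the union of the paths is a subdivision of a five-edge multigraph on $\{x,y,z\}$. Lemma \ref{le:cont} then shortens each subdivided edge to length at most $3$ with the same parity, while $\cg$ only gets contracted. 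Your parity bookkeeping thus becomes a finite check of eighteen graphs, and the forbidden minor typically lives away from $x$. For instance, when all five paths are even, $\cg(G'')$ contains a $K_5$ on $a,b,c,d$ and the middle vertex of the contracted $z$--$y$ path. If instead $i=1$ for every such path, then $\{a,x,c\}$ separates $b$ from $d$.
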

\begin{proof}
	Let $x\in V(G)$ be of degree $4$, and $y$ the vertex of maximal distance from $x$ in $G$. After checking that the two polyhedra on $5$ vertices have a non-planar common neighbourhood graph, we may assume that $G$ has at least $6$ vertices, thus $\dist_G(x,y)\geq 2$. By $3$-connectivity, there exist three internally disjoint $xy$-paths in $G$. We write in cyclic order
	\[N(x)=\{a_1,b_1,c_1,d_1\},\]
	and w.l.o.g.
	\[
	P_a: x,a_1,a_2,\dots,a_A=y,\qquad P_b: x,b_1,b_2,\dots,b_B=y,\qquad P_c: x,c_1,c_2,\dots,c_C=y,
	\]
	$A,B,C\geq 2$ for the three internally disjoint $xy$-paths, that do not contain $d_1$.
	
	Since $G-x-y$ is connected, and since the cyclic order of vertices around $x$ is $a_1,b_1,c_1,d_1$, then we may find in $G$ a path
	\[P_d: d_1,d_2,\dots,d_D, \qquad D\geq 2,\]
	where either $d_D=a_i$ for some $1\leq i\leq A-1$, or $d_D=c_i$ for some $1\leq i\leq C-1$, and $d_1,d_2,\dots,d_{D-1}$ do not belong to $P_a,P_b,P_c$, as in Figure \ref{f:xyz0} (cf. Figure \ref{f:np}, illustrating what might happen if one drops the assumption that $G$ is planar).
	\begin{figure}[ht]
	\centering
	\includegraphics[width=5cm]{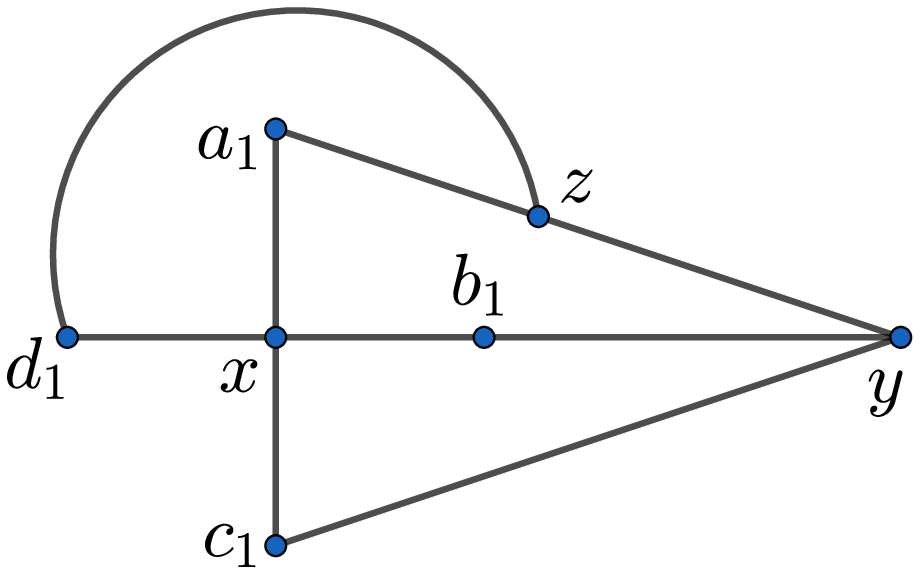}
	\caption{Situation for Lemma \ref{le:3cut}.}
	\label{f:xyz0}
	\end{figure}
	
	We claim that $i=1$. If we can prove it, then every $d_1b_1$-path in $G$ must contain one of ${a_1,x,c_1}$, completing the proof of the present lemma.
	
	Suppose by contradiction that $i\geq 2$. W.l.o.g., we may take
	\[z:=d_D=a_i.\]
	We define a subgraph $G'$ of $G$ consisting of the internally disjoint paths
	\[P_b\text{ of length }B,\quad P_c\text{ of length }C\]
	from $x$ to $y$,
	\[P_d\text{ of length }D,\quad P_a': x,a_1,a_2,\dots,a_{i-1},z\text{ of length }i\]
	from $x$ to $z$, and
	\[P_a'': z,a_{i+1},\dots,y\text{ of length }A-i\]
	from $z$ to $y$. Seeing as $B,C,D,i\geq 2$, we may contract each of
	\[P_b,\ P_c,\ P_d,\ P_a'\]
	to a path of length $2$ or $3$ respectively if the original path is of even or odd length, and $P_a''$ to a path of length $1$, $2$, or $3$, respectively if the length $A-i$ of the original path is $1$, a positive even, a positive odd integer, obtaining a graph $G''$. By Lemma \ref{le:cont}, $\cg(G'')$ is a contraction of $\cg(G')$, thus $\cg(G'')$ is a minor of $\cg(G)$. 
	
	Due to the symmetries one may interchange $y,z$ or $P_b,P_c$ or $P_d,P_a'$, so that there are six total scenarios for the lengths of the corresponding contracted paths in $G''$, as in Figure \ref{f:xyz}. These may be combined with the three options for the length of the contracted path corresponding to $P_a''$, yielding eighteen total possibilities for the graph $G''$. We check that for each of these, $\cg(G'')$ is non-planar. Therefore, $\cg(G)$ is non-planar, contradiction. We have shown that indeed $i=1$.
		\begin{figure}[ht]
			\centering
			\includegraphics[width=0.275\textwidth]{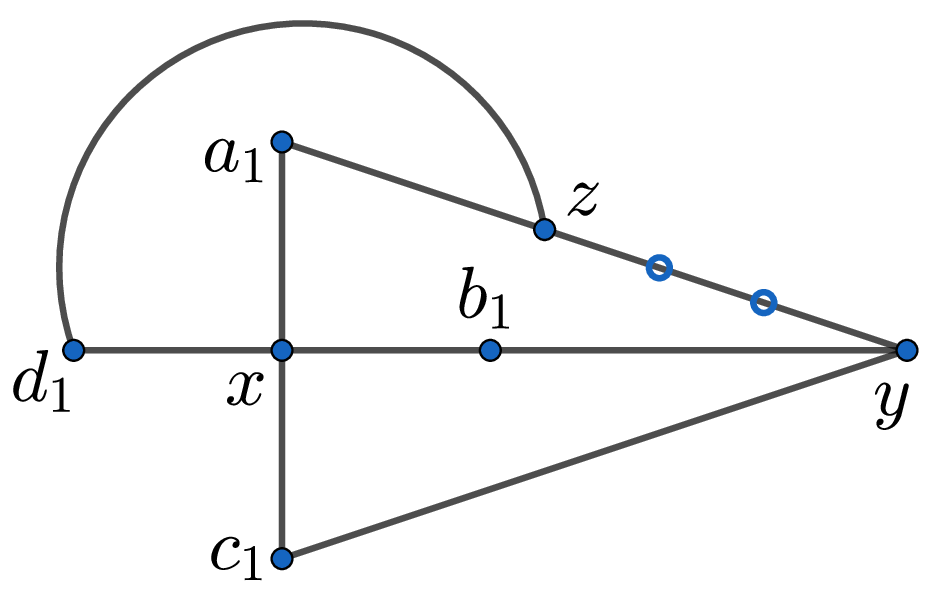}
			\hspace{0.75cm}
			\includegraphics[width=0.275\textwidth]{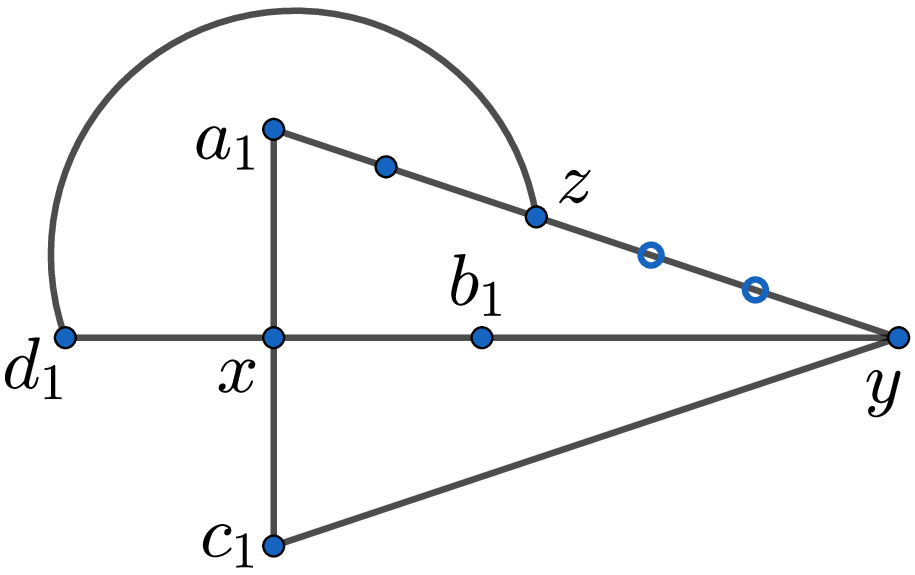}
			\hspace{0.75cm}
			\includegraphics[width=0.275\textwidth]{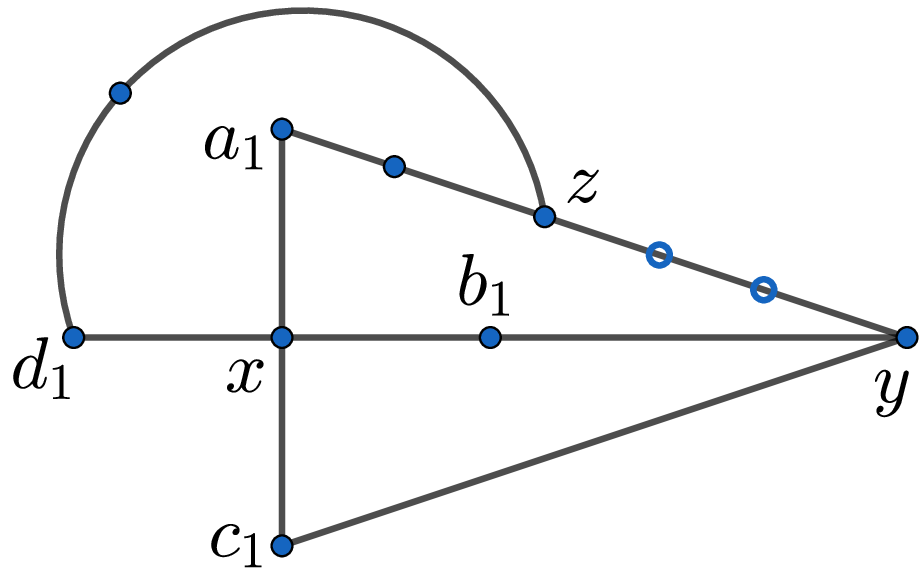}
			\hspace{0.75cm}
			\includegraphics[width=0.275\textwidth]{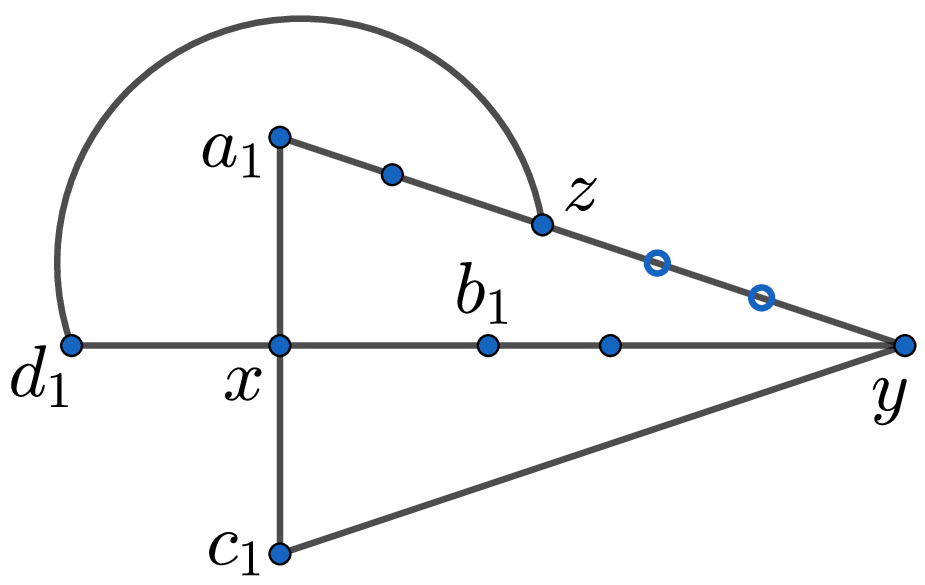}
			\hspace{0.75cm}
			\includegraphics[width=0.275\textwidth]{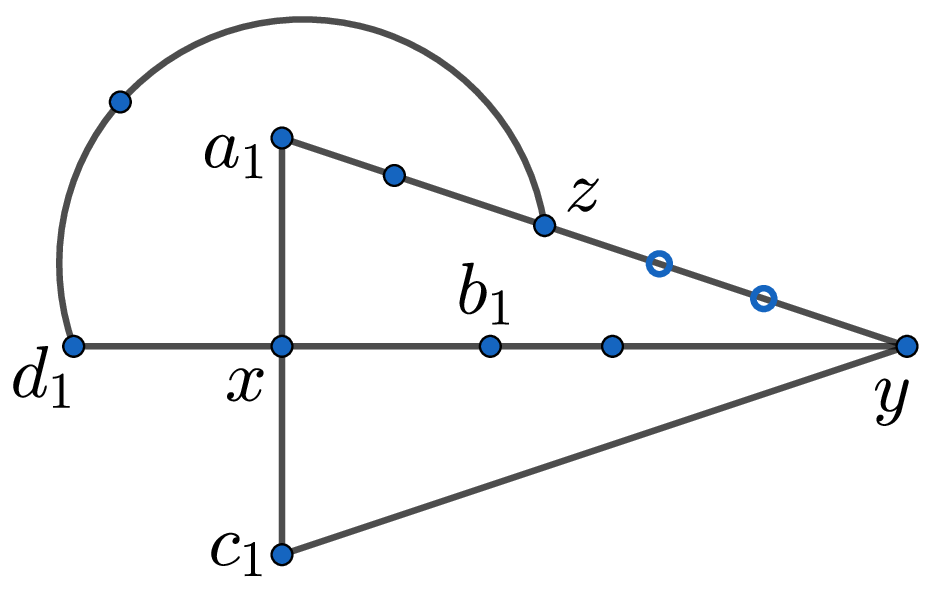}
			\hspace{0.75cm}
			\includegraphics[width=0.275\textwidth]{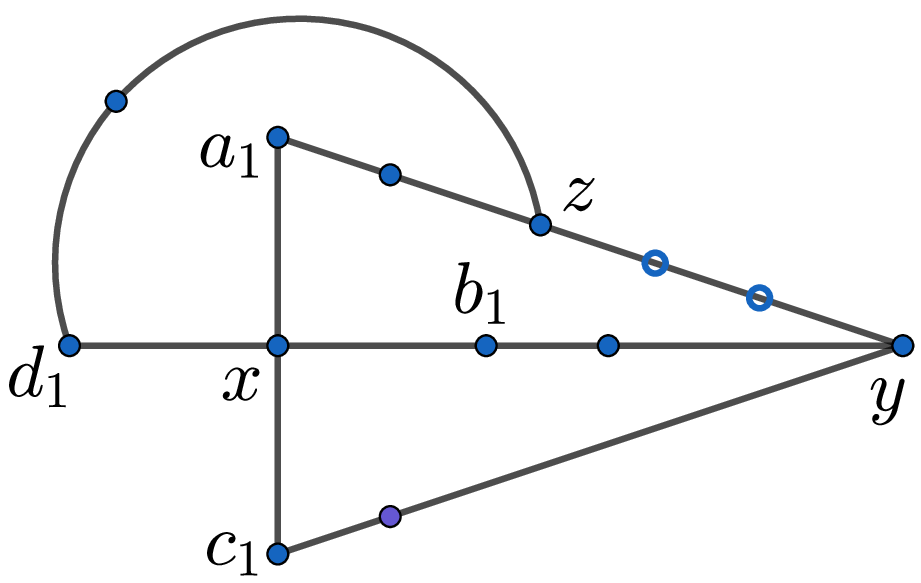}
			\caption{The eighteen possibilities for the minor $G''$ of $G$. In each of the six pictures, there may be either $0$, or $1$, or $2$ of the empty vertices between $z$ and $y$, yielding three possibilities per picture.}
			\label{f:xyz}
		\end{figure}
	
\end{proof}

\begin{lemma}
	\label{le:main}
Let $G$ be a polyhedron such that $\Delta(G)\geq 4$ and $\cg(G)$ is planar. Then we may list the vertices of degree $4$ of $G$ as
\[x_1,x_2,\dots,x_n, \qquad n\geq 1\]
such that for every $1\leq i\leq n$ there is a $3$-cut $\{a_i,x_i,c_i\}$ of $G$, with $a_i,c_i$ neighbours of $x_i$ that are not consecutive in the cyclic order around $x_i$, and that are of degree $3$ in $G$, and moreover the vertices
\[a_1,a_2,\dots,a_n,c_n,c_{n-1},\dots,c_1\]
appear in this order along the external face of $G$, as in Figure \ref{f:axc1}.
\begin{figure}[h!]
	\centering
	\includegraphics[width=5.cm]{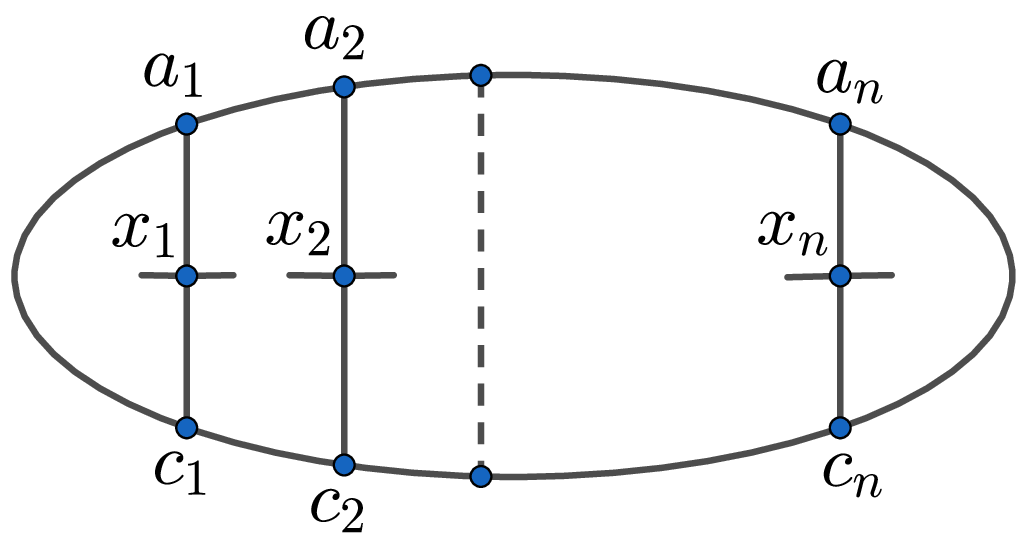}
	\caption{Situation in Lemma \ref{le:main}.}
	\label{f:axc1}
\end{figure}

\end{lemma}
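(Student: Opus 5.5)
We start from Lemma \ref{le:5}: since $\cg(G)$ is planar, $\Delta(G)=4$. Let $x$ be any vertex of degree $4$, with neighbours $a,b,c,d$ in cyclic order. By Lemma \ref{le:3cut}, both $\{a,x,c\}$ and $\{b,x,d\}$ are candidates. Applying the argument of that lemma to each pair of non-consecutive neighbours shows that at least one of these triples is a $3$-cut. The first step is to show that \emph{exactly one} is a $3$-cut, and that its two non-$x$ vertices have degree $3$.

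For exactly one: suppose both were $3$-cuts. Then $G-x$ splits into four pieces arranged around $x$, each attached to the rest only through $a,b,c,d$. A $K_5$ or $K_{3,3}$ minor of $\cg(G)$ could then be exhibited, in the same way as in Lemma \ref{le:3cut}, by contracting paths via Lemma \ref{le:cont}. Alternatively, if $\cg(G)$ is planar there should be a direct contradiction with $3$-connectivity. I would check this by a small finite case analysis of contracted minors.

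For the degrees: $a$ already has the $4$ vertices $x,b,d$ and some neighbour on each side in $\cg(G)$-relevant structure. Suppose $\deg a=4$. Then $a$ itself has a $3$-cut through two of its neighbours, by the first part, and this would interact badly with the cut through $x$. I expect this to yield a non-planar minor, again checked via Lemma \ref{le:cont} contractions. The cleaner route may be: $a$ of degree $4$ together with $x$ of degree $4$ gives six vertices $N(x)\cup N(a)$, mutually adjacent enough in $\cg(G)$ to contain a $K_{3,3}$ subdivision. I would try this first.

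The second step is the global arrangement. Each degree-$4$ vertex $x_i$ now gives a $3$-cut $\{a_i,x_i,c_i\}$ that separates $G$ into two sides. Two such cuts cannot cross: if the cut for $x_j$ had vertices on both sides of the cut for $x_i$, 3-connectivity plus planarity would force $x_j$'s cut to pass through $x_i$. That is excluded since $a_i,c_i$ have degree $3$ and cuts at distinct vertices differ. Hence the cuts are nested, i.e.\ laminar. Next I would show the nesting is linear rather than branching: if three cuts each had another cut ``on the far side'', we would get a configuration giving a $K_{3,3}$ minor of $\cg(G)$. This yields a linear order $x_1,\dots,x_n$ in which each cut separates $x_{i-1}$'s side from $x_{i+1}$'s. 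Finally, the cut vertices lie on a closed curve through the cuts' faces (the curve crossing $G$ at $a_i,x_i,c_i$). Choosing a face meeting all the $a_i$ and $c_i$ as the external face, the order $a_1,\dots,a_n,c_n,\dots,c_1$ follows from planarity and the nesting.

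The main obstacle is the non-branching and common-face claim: showing that all $a_i,c_i$ lie on one face in that order. I would attempt it by contradiction, extracting from two ``parallel'' families of cuts a minor $G''$ small enough to verify non-planarity of $\cg(G'')$ directly, as in the eighteen-case check of Lemma \ref{le:3cut}.
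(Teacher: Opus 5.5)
\paragraph{Verdict: genuine gap.} Your proposal is a programme rather than a proof. Each substantive step is deferred to a non-planar minor of $\cg(G)$ that you expect to exist but never exhibit. As a result, the two claims that carry the content of the lemma remain unproved: that $\deg_G(a)=\deg_G(c)=3$, and that all $a_i,c_i$ appear in the stated order along one face.

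Your ``cleaner route'' for the degree claim cannot work as a local argument. Take $N(a)=\{w,x,y,z\}$, and suppose no cycle of length at most $5$ passes through $x$ or $a$. Then $\cg(G)$ restricted to $N(x)\cup N(a)$ is just the disjoint union of the cliques on $\{a,b,c,d\}$ and on $\{x,w,y,z\}$, which is planar. So any $K_{3,3}$ would have to be routed along global paths, and these you do not specify.

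The second step has the same problem:
\begin{itemize}
\item The non-crossing claim (``$3$-connectivity plus planarity would force $x_j$'s cut to pass through $x_i$'') is asserted without a valid argument.
\item The exclusion of branching is only announced.
\item ``Choosing a face meeting all the $a_i$ and $c_i$'' presupposes exactly the common-face statement to be proved, as you acknowledge.
\end{itemize}
Uniqueness of the cut at $x$ is not needed for the statement.

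\paragraph{The paper's route.} The paper avoids new minor computations. The separating curve of the $3$-cut $\{a,x,c\}$ crosses a face containing $a$ and $c$ but not $x$, and this face is taken as external. Suppose $\deg_G(a)=4$, with $N(a)=\{w,x,y,z\}$ in cyclic order and $y,a,z$ on the external face. Lemma \ref{le:3cut} is applied again at $a$, giving a $3$-cut $\{w,a,y\}$ or $\{x,a,z\}$. The paper concludes that this pair would also lie on the external face, which is impossible. The linear order is then read off from the fact that each such $3$-cut splits $G$ into exactly two components.

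\paragraph{A caveat on that step.} The face-coincidence is the real crux, and it does not follow from the mere existence of the two $3$-cuts. Consider the polyhedron with faces $[a,x,y],[x,c,y],[a,x,w],[a,w,z],[a,y,c,z],[x,c,z,w]$. It has $\deg a=\deg x=4$ and $3$-cuts $\{a,x,c\}$ and $\{x,a,z\}$ of the required type, yet $x\notin[a,y,c,z]$. It is excluded only because $\cg(G)=K_6-cz$ is non-planar.

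So on either route, the missing ingredient is an argument that genuinely uses the planarity of $\cg(G)$, beyond the existence of the individual $3$-cuts, to force all the relevant non-consecutive pairs onto one common face. Your proposal correctly identifies this as the obstacle but does not overcome it.
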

\begin{proof}
Due to Lemma \ref{le:5}, we may assume that $\Delta(G)=4$. Let $x\in V(G)$ be a vertex of degree $4$. By Lemma \ref{le:3cut},
\[\{a,x,c\}\]
is a $3$-cut of $G$, with $a,c$ neighbours of $x$ that are non-consecutive in the cyclic order. It follows that $a,c$ belong to the external face of $G$.

We claim that
\[\deg_{G}(a)=3.\]
By contradiction, we write $N_G(a)=\{w,x,y,z\}$ in this cyclic order, with $y,a,z$ consecutive on the external face of $G$, as in Figure \ref{f:axc}. On the other hand by Lemma \ref{le:3cut}, either $\{w,a,y\}$ or $\{x,a,z\}$ is a $3$-cut in $G$. Hence as above, either $w$ and $y$ or $x$ and $z$ lie on the external face of $G$, contradiction. Therefore, indeed $\deg_{G}(a)=3$. Similarly, $\deg_{G}(c)=3$.
\begin{figure}[ht]
\centering
\includegraphics[width=5.cm]{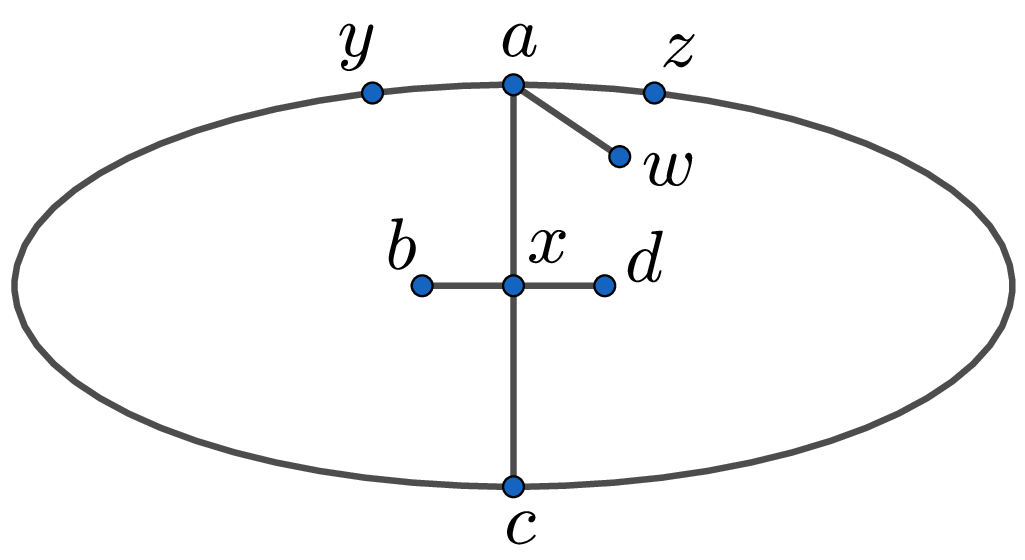}
\caption{$\deg(a)=4$ is impossible.}
\label{f:axc}
\end{figure}

Therefore, every vertex of degree $4$ in $G$ determines a $3$-cut consisting of the vertex itself and two of its neighbours, that are not consecutive in the cyclic order, and that are of degree $3$ in $G$. Every such $3$-cut splits $G$ into $2$ components. We may then list the vertices of degree $4$ of $G$ as
\[x_1,x_2,\dots,x_n, \qquad n\geq 1\]
such that for every $1\leq i\leq n$ there is a $3$-cut $\{a_i,x_i,c_i\}$ of $G$, with $a_i,c_i$ neighbours of $x_i$ that are not consecutive in the cyclic order, and of degree $3$ in $G$, and moreover the vertices
\[a_1,a_2,\dots,a_n,c_n,c_{n-1},\dots,c_1\]
appear in this order along the external face of $G$, as in Figure \ref{f:axc1}.
\end{proof}

\begin{defin}
	\label{def:0n}
Let $G$ be a polyhedron such that $\Delta(G)\geq 4$ and $\cg(G)$ is planar. With the notation of Lemma \ref{le:main}, for $n=1$ we call $G_0,G_{1}$ the connected components of
\[G-a_1-x_1-c_1,\]
and for $n\geq 2$ we call $G_0$ the connected component of
\[G-a_1-x_1-c_1\]
not containing $x_2$ and $G_{n}$ the connected component of
\[G-a_n-x_n-c_n\]
not containing $x_{n-1}$. We then define the plane, $2$-connected graphs
\begin{equation}
	\label{eq:g0}
\overline{G_0}:=\langle\{a_1,x_
1,c_1\}\cup V(G_0)\rangle
\end{equation}
and
\begin{equation}
	\label{eq:gn}
\overline{G_{n}}:=\langle\{a_n,x_n,c_n\}\cup V(G_{n})\rangle.
\end{equation}
\end{defin}
Referring to Figure \ref{f:axc1}, we depict $\overline{G_0}$ and $\overline{G_{n}}$ as in Figure \ref{f:g0n}. Note that $\overline{G_0}$ and $\overline{G_{n}}$ are $2$-connected since by construction, all of their regions are delimited by cycles. The internal ones correspond to faces of $G$.
\begin{figure}[ht]
	\centering
	\begin{subfigure}{0.48\textwidth}
		\centering
		\includegraphics[width=1.25cm]{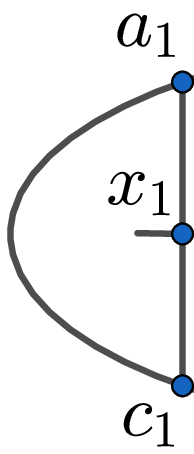}
		\caption{$\overline{G_0}$.}
		\label{f:g0}
	\end{subfigure}
	\begin{subfigure}{0.48\textwidth}
		\centering
		\includegraphics[width=1.25cm]{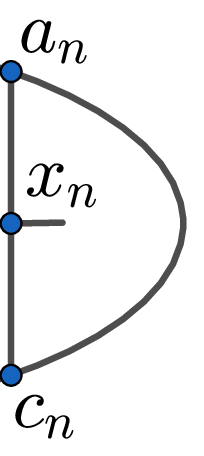}
		\caption{$\overline{G_n}$.}
		\label{f:gn}
	\end{subfigure}
	\caption{Definition \ref{def:0n}.}
	\label{f:g0n}
\end{figure}

\begin{prop}
	\label{p:oddface}
Let $G$ be a polyhedron such that $\Delta(G)\geq 4$ and $\cg(G)$ is planar. Then there is at least one odd face of $G$ lying in each of $\overline{G_0}$ \eqref{eq:g0} and $\overline{G_{n}}$ \eqref{eq:gn}.
\end{prop}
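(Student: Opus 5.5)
By symmetry it suffices to treat $\overline{G_0}$. We argue by contradiction: assume every internal region of $\overline{G_0}$ (these are exactly the faces of $G$ lying in $\overline{G_0}$) is even. Since $\overline{G_0}$ is plane and $2$-connected, its regions are bounded by cycles. In a plane graph, the length of the outer cycle has the same parity as the sum of the internal region lengths, because each internal edge is counted twice. So the external region of $\overline{G_0}$ is even too, and $\overline{G_0}$ is bipartite.

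The outer cycle of $\overline{G_0}$ is $a_1,x_1,c_1$ followed by the external-face path of $G$ from $c_1$ back to $a_1$ through $G_0$. Bipartiteness therefore gives a parity constraint: $\dist$ along this path has the same parity as $a_1,x_1,c_1$ has length, namely $2$, so the arc of the external face of $G$ inside $\overline{G_0}$ joining $a_1$ to $c_1$ has even length. Also $x_1$ has exactly two neighbours $b,d$ besides $a_1,c_1$. One of them, say $b$, lies in $G_0$. The other lies on the far side of the $3$-cut, since $a_1,c_1$ are non-consecutive around $x_1$ and the cut separates the two remaining neighbours.

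Next we produce a non-planar minor of $\cg(G)$, in the spirit of Lemma \ref{le:3cut}. Bipartiteness of $\overline{G_0}$ means $a_1$ and $c_1$ lie in the same colour class, while $x_1$ and $b$ lie in the other. By Proposition \ref{p:p2cb}, adapted to $\overline{G_0}$ with $\Delta\leq 4$ but only $x_1$ of degree $3$ inside $\overline{G_0}$ (all vertices of $G_0$ have degree $3$ in $G$ by Lemma \ref{le:main}, since the degree-$4$ vertices are the $x_i$), the congraph of $\overline{G_0}$ splits into two $2$-connected planar components. One contains $a_1,c_1$; the other contains $x_1,b$. Inside $G$, however, $a_1$ and $c_1$ are also joined in $\cg(G)$ through $x_1$, and both are $\cg$-adjacent to $d$ and to vertices on the other side. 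The component containing $\{a_1,c_1\}$ together with the side $\overline{G_n}$ then has to be linked in the plane across the cut. Meanwhile the $x_1$-component, through $x_1$'s $\cg$-neighbours $a_1',c_1'$ (the other neighbours of $a_1,c_1$) and $d$-side vertices, also crosses the cut. These crossing paths should interlace on the boundary of the region of the $2$-connected components, as they did in the proof of Theorem \ref{thm:1} \ref{(a)}$\Rightarrow$\ref{(c)}. From this we would extract a $K_{3,3}$ subdivision, or alternatively apply Lemma \ref{le:cont} to contract long degree-$2$ paths and reduce to finitely many small configurations to check directly, as in Lemma \ref{le:3cut}.

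The main obstacle is making the final step rigorous, namely exhibiting the $K_{3,3}$ (or a checkable minor). I would first contract $\overline{G_0}$ via Lemma \ref{le:cont} to a minimal bipartite gadget, preserving the parity structure, for instance a hexagon containing the path $a_1,x_1,c_1$ with the chord from $x_1$ to $b$. I would contract the other side to a short odd-face-containing gadget in the same way, and then verify by hand that the resulting congraph is non-planar. The use of the other side is necessary, since some odd face must exist somewhere: $\cg(G)$ planar forces connectivity considerations analogous to Theorem \ref{thm:1}.
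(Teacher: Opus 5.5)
Your opening step (every internal face of $\overline{G_0}$ even, hence the outer face even and $\overline{G_0}$ bipartite) is exactly how the paper starts. After that, however, the proposal stops short of a proof. The contradiction is only promised (crossing paths that ``should interlace'', a $K_{3,3}$ you ``would extract''), and you yourself flag this as the main obstacle.

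The ingredients you line up for that step are also unreliable:
\begin{itemize}
\item Since $a_1,b,c_1$ are all neighbours of $x_1$, they lie in the same colour class of $\overline{G_0}$ (and are pairwise adjacent in $\cg(G)$). So $b$ is not in the component of $\cg(\overline{G_0})$ containing $x_1$.
\item Lemma \ref{le:cont} only contracts paths through degree-$2$ vertices, and a polyhedron has none. You would first have to pass to a subdivided subgraph, but such a subgraph keeps only the parities of a few cycles. It forgets precisely what the contradiction rests on: that \emph{every} face of $\overline{G_0}$ is even, and that every vertex of $\overline{G_0}$ other than $a_1,c_1$ has degree $3$.
\item Your claim that the other side of the cut must be used is mistaken. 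The paper's contradiction lives entirely inside $\overline{G_0}$.
\end{itemize}

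The missing idea is to suppress the two degree-$2$ vertices and invoke Lemma \ref{le:cuodd}. Write $N_{\overline{G_0}}(a_1)=\{x_1,a'\}$ and $N_{\overline{G_0}}(c_1)=\{x_1,c'\}$. By bipartiteness, $x_1$ is adjacent to neither $a'$ nor $c'$. Hence $G_\#:=\overline{G_0}-a_1-c_1+x_1a'+x_1c'$ is a plane, cubic, $2$-connected graph. Replacing the paths $a',a_1,x_1$ and $x_1,c_1,c'$ by single edges shortens the faces through $b,x_1,a'$ and through $b,x_1,c'$ by one each, and the outer face by two. So $G_\#$ has exactly two odd faces, and they share the edge $x_1b$. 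If two regions of $G_\#$ met in two non-adjacent vertices, the same would happen in $G$; hence $G_\#$ is $3$-connected. It is therefore a cubic polyhedron with exactly two odd faces that are adjacent, contradicting Lemma \ref{le:cuodd}.

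Conversely, subdividing the two edges at $x_1$ other than $x_1b$ in such a cubic polyhedron would produce a bipartite graph of exactly the shape of $\overline{G_0}$. So what has to be excluded is really this global parity phenomenon, and a local search for a $K_{3,3}$ across the cut gives no evident handle on it.
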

\begin{proof}
By contradiction, assume that all the internal regions of $\overline{G_0}$ correspond to even faces of $G$. By the handshaking lemma, the external region of $\overline{G_0}$ is thus also of even length, so that $\overline{G_0}$ is bipartite.

Call $x:=x_1$, $a:=a_1$, $c:=c_1$, and $b$ the neighbour $\neq a,c$ of $x$ in $\overline{G_0}$. By Lemma \ref{le:main}, in $\overline{G_0}$ every vertex has degree $3$ save for $a$ and $c$ that have degree $2$. We write $N_{\overline{G_0}}(a)=\{x,a'\}$ and $N_{\overline{G_0}}(c)=\{x,c'\}$ (Figure \ref{f:g01}). Thus $a',a,x,c,c'$ are consecutive in this order along the external region of $\overline{G_0}$.
\begin{figure}[ht]
	\centering
	\includegraphics[width=2.cm]{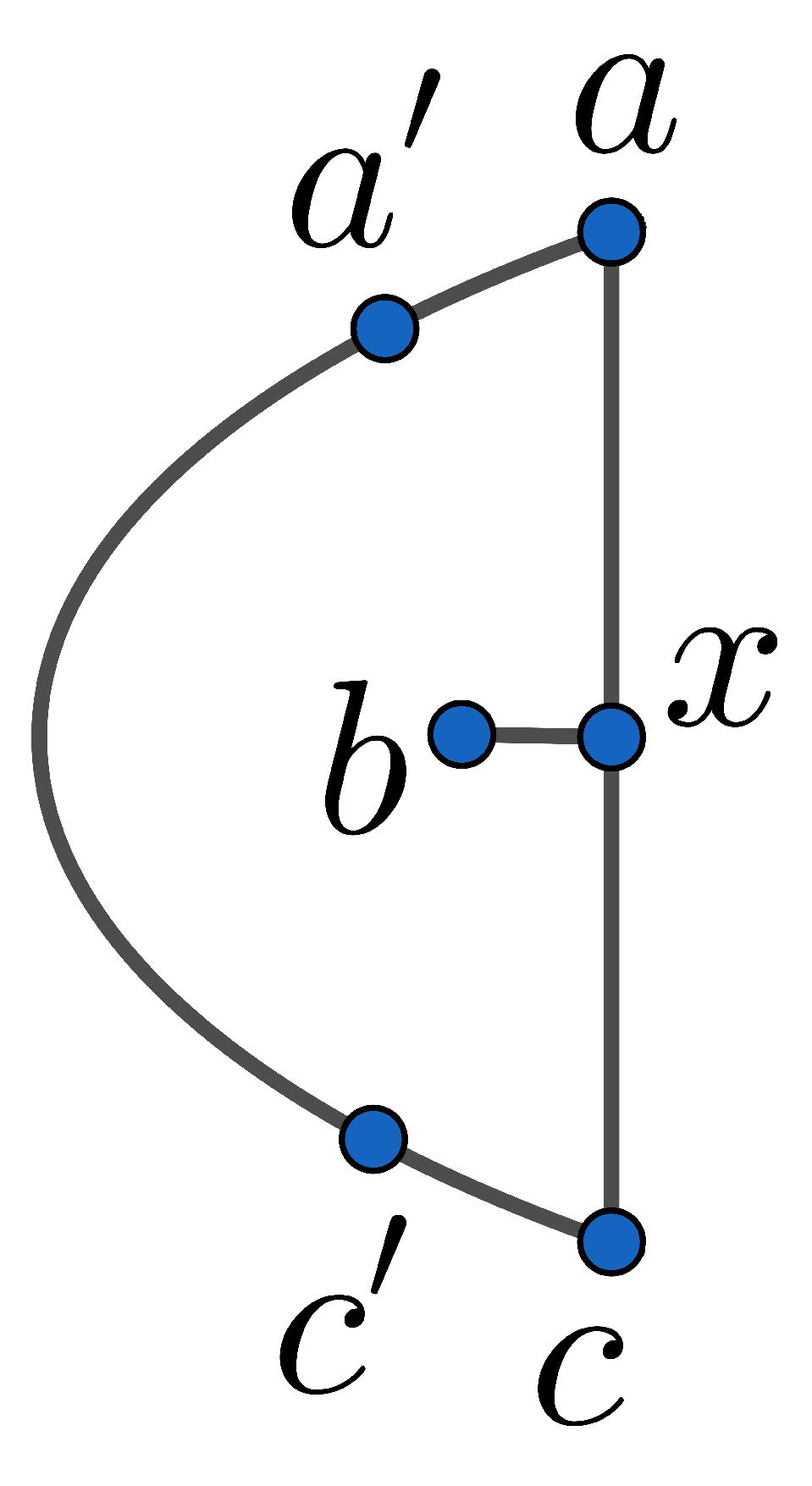}
	\caption{Proposition \ref{p:oddface}, $\overline{G_0}$.}
	\label{f:g01}
\end{figure}

Since $\overline{G_0}$ is bipartite, then $x$ is not adjacent to $a',c'$. Therefore, the contraction
\[G_\#:=\overline{G_0}-a-c+xa'+xc'\]
is a plane, cubic, $2$-connected graph. By construction, every region of $G_\#$ is of even length save for the one containing $b,x,a'$ and the one containing $b,x,c'$, and these regions are adjacent. If we show that $G_\#$ is in fact $3$-connected, then it is a cubic polyhedron with exactly two odd faces that are adjacent, contradicting Lemma \ref{le:cuodd}.

By contradiction, assume that $G_\#$ is not $3$-connected. Since it is plane and $2$-connected, there exist regions $\alpha,\beta$ of $G_\#$ that have two common non-adjacent vertices. By construction, the same is true for $G$, contradiction.

Clearly the reasoning for $\overline{G_{n}}$ is the same as for $\overline{G_0}$.
\end{proof}

\subsection{Concluding the proofs of Theorems \ref{thm:2} and \ref{thm:0}}

The following is an immediate consequence of Proposition \ref{p:oddface}.
\begin{cor}
	\label{cor:bip4}
	Let $G$ be a bipartite polyhedron with $\Delta(G)\geq 4$. Then $\cg(G)$ is not planar.
\end{cor}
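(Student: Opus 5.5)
We argue by contradiction, relying on Proposition \ref{p:oddface}. Suppose that $G$ is a bipartite polyhedron with $\Delta(G)\geq 4$ and that $\cg(G)$ is planar. These are exactly the hypotheses of Lemma \ref{le:main} and Definition \ref{def:0n}. Lemma \ref{le:main} therefore gives the vertices of degree $4$, $x_1,\dots,x_n$ with $n\geq 1$, together with the $3$-cuts $\{a_i,x_i,c_i\}$. From these, Definition \ref{def:0n} produces the plane, $2$-connected subgraph $\overline{G_0}$.

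Proposition \ref{p:oddface} then says that some odd face of $G$ lies in $\overline{G_0}$. Here ``lying in'' means that the face is an internal region of $\overline{G_0}$, and the paper notes that each internal region of $\overline{G_0}$ corresponds to a face of $G$. So $G$ has a face of odd length. But $G$ is a planar, $2$-connected, bipartite graph, and as recalled in the introduction, every region of any plane immersion of such a graph is bounded by an even cycle. This is a contradiction, so $\cg(G)$ is not planar.

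There is no real obstacle here. The only thing to check is that Proposition \ref{p:oddface} applies without any further hypothesis, and it does: it requires only $\Delta(G)\geq 4$ and planarity of $\cg(G)$, which are precisely the assumptions we made for contradiction. In fact the proof of Proposition \ref{p:oddface} itself opens by assuming that all internal faces of $\overline{G_0}$ are even, which is automatic in the bipartite case, so the corollary is just the specialisation of that argument.
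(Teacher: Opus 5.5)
Your argument is correct and matches the paper's. The paper states the corollary as an immediate consequence of Proposition~\ref{p:oddface}, which under the contrary assumption that $\cg(G)$ is planar produces an odd face of $G$ inside $\overline{G_0}$; this is impossible because every face of a bipartite polyhedron is even.
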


Proposition \ref{p:oddface} also has the following consequence.

\begin{cor}
	\label{cor:K24}
	Let $G$ be a polyhedron satisfying $\od(G^*)\simeq K_2$. Then $\cg(G)$ is not planar.
\end{cor}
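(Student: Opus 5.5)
We argue by contradiction, splitting on the maximal degree of $G$. Suppose $G$ is a polyhedron with $\od(G^*)\simeq K_2$ and $\cg(G)$ planar. By Lemma \ref{le:5}, $\Delta(G)\leq 4$.

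\emph{Case $\Delta(G)=3$.} Here $G$ is cubic, so $G^*$ is maximal planar. The hypothesis $\od(G^*)\simeq K_2$ says that $G^*$ has exactly two odd vertices and that they are adjacent. This contradicts Lemma \ref{le:cuodd} directly, and actually rules out such $G$ altogether.

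\emph{Case $\Delta(G)=4$.} Now we can use the structure from Lemma \ref{le:main} and Definition \ref{def:0n}. The degree-$4$ vertices $x_1,\dots,x_n$ determine $3$-cuts $\{a_i,x_i,c_i\}$, and these cuts split $G$ into a chain of pieces. The two end pieces are $\overline{G_0}$ and $\overline{G_n}$.

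\begin{enumerate}
\item By Proposition \ref{p:oddface}, at least one odd face of $G$ lies in $\overline{G_0}$ and at least one lies in $\overline{G_n}$. These are internal regions of the respective subgraphs, hence genuine faces of $G$.
\item Since $G$ has exactly two odd faces, call them $\alpha$ and $\beta$, we may take $\alpha$ as an internal face of $\overline{G_0}$ and $\beta$ as an internal face of $\overline{G_n}$.
\item The hypothesis says $\alpha$ and $\beta$ are adjacent, so they share an edge $e$. We want to show this is impossible.
\end{enumerate}

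The main step is showing that no face of $G$ can be internal to both $\overline{G_0}$ and $\overline{G_n}$ while also sharing an edge with the other end. The separating set $\{a_1,x_1,c_1\}$ splits $G$ into $G_0$ and the rest, and $G_0$ is disjoint from $G_n$ (for $n=1$ they are the two components; for $n\geq 2$ this follows from the nested cuts). A face $\alpha$ internal to $\overline{G_0}$ uses only vertices of $V(G_0)\cup\{a_1,x_1,c_1\}$. A face $\beta$ internal to $\overline{G_n}$ uses only vertices of $V(G_n)\cup\{a_n,x_n,c_n\}$.

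\emph{If $n\geq 2$,} these vertex sets are disjoint, since the $a_i,c_i$ are distinct along the external face and the $x_i$ are distinct. Then $\alpha$ and $\beta$ share no vertex, contradiction.

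\emph{If $n=1$,} a common edge $e$ of $\alpha$ and $\beta$ must have both endpoints in $\{a_1,x_1,c_1\}$. The edges $a_1x_1$ and $x_1c_1$ each lie on only one internal face on each side, so I would check that the shared edge forces $\alpha$ and $\beta$ to be the two faces on either side of $x_1a_1$ (say).

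\begin{itemize}
\item The plan is first to rule out $a_1c_1\in E(G)$. The vertices $a_1$ and $c_1$ are non-consecutive neighbours of $x_1$ and have degree $3$. The two faces around $x_1$ lying in $\overline{G_0}$ would then be $[x_1,a_1,\dots]$ and $[x_1,c_1,\dots]$, and if $a_1c_1$ were an edge I expect a contradiction with $3$-connectivity or with $\deg(a_1)=3$.
\item In the remaining configuration, $\alpha\ni a_1,x_1$ lies on the $G_0$ side and $\beta\ni a_1,x_1$ on the $G_1$ side. Here I would apply the contraction argument from the proof of Proposition \ref{p:oddface} to $\overline{G_0}$, where $\alpha$ is its \emph{only} odd internal face. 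By handshaking, the external region of $\overline{G_0}$ is then odd.
\item Contracting as in $G_\#$ gives a cubic polyhedron whose odd faces are exactly the faces containing $b,x,a'$ and $b,x,c'$. Parity bookkeeping (as in that proof) should show there are exactly two of them and that they are adjacent, again contradicting Lemma \ref{le:cuodd}.
\end{itemize}

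The delicate point I expect is exactly this last parity bookkeeping: identifying which of the two regions at $x$ in $G_\#$ inherits the oddness from $\alpha$ versus from the external boundary. If this route fails, the fallback is to count odd faces on each side via handshaking and reduce to $\overline{G_0}$ having exactly one odd internal face adjacent to the cut, then find a $K_5$ or $K_{3,3}$ minor in $\cg(G)$ via Lemma \ref{le:cont}.
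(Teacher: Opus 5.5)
Your reduction agrees with the paper up to the last step. The cubic case is excluded by Lemma \ref{le:cuodd} (the paper leaves this implicit), and Proposition \ref{p:oddface} forces $n=1$. The two odd faces $\alpha,\beta$ must then lie on either side of $ax$; your claims that $ac\notin E(G)$ and that the shared edge is $ax$ or $xc$ are easy to check. The genuine gap is the transplanted contraction $G_\#=\overline{G_0}-a-c+xa'+xc'$. In Proposition \ref{p:oddface}, $x$ was non-adjacent to $a'$ and $c'$ \emph{because} $\overline{G_0}$ was bipartite. Here $\overline{G_0}$ contains the odd face $\alpha=\xi_{ab}$. If $\alpha$ is a triangle then $a'=b$, so $xa'$ is already an edge and $G_\#$ has a double edge; Lemma \ref{le:cuodd} then does not apply. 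Switching to $\overline{G_1}$ only helps if $\beta$ is not a triangle. Your argument gives no reason why both cannot be triangles (say $N_G(a)=\{x,b,d\}$ with $[a,x,b]$ and $[a,x,d]$ faces). Your parity bookkeeping is also wrong, though harmlessly. After the contraction, $\alpha$ has lost $a$ and is even, $\xi_{bc}$ has lost $c$ and is odd, and the external region has lost $a,c$ and stays odd. So the two odd regions of $G_\#$ are the one through $b,x,c'$ and the external one, adjacent along $xc'$. With that correction your route works whenever $\alpha$ or $\beta$ is not a triangle. The fallback via Lemma \ref{le:cont} is not an argument.

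The paper avoids the simplicity problem by staying in all of $G$. Write $N_G(a)=\{x,a',a''\}$ with $a'$ on $\alpha$ and $a''$ on $\beta$, and set $G'=G-a+a'a''$. Since $a'$ and $a''$ lie in different components of $G-\{a,x,c\}$, the edge $a'a''$ is always new, even when $a'=b$ or $a''=d$. Now $x$ drops to degree $3$, so $G'$ is cubic. The merged region through $x,b,d,a',a''$ has length $|\alpha|+|\beta|-3$, which is odd. The former external face of $G$ was even, being neither $\alpha$ nor $\beta$; it loses one from its length and becomes odd. All other faces are unchanged, and the two odd regions share the edge $a'a''$. Three-connectivity of $G'$ comes from $\{a,x,c\}$ being a $3$-cut, and Lemma \ref{le:cuodd} gives the contradiction. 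Replacing your last step by this construction, or adding a separate reduction for the case where both odd faces are triangles, closes the gap.
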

\begin{proof}
By contradiction, let $\cg(G)$ be planar. Recall the notation of Lemma \ref{le:main} and Definition \ref{def:0n}. By Proposition \ref{p:oddface}, there is at least one odd face of $G$ lying in each of $\overline{G_{0}}$ and $\overline{G_{n}}$. Since $\od(G^*)\simeq K_2$, in fact we have $n=1$ i.e., there is exactly one vertex $x=x_1$ of degree $4$ in $G$. As usual, call $a,b,c,d$ its neighbours in cyclic order, where $\{a,x,c\}$ is a $3$-cut in $G$ due to Lemma \ref{le:main}. Letting $N_G(a)=\{x,a_1,a_2\}$, and calling $\omega_1,\omega_2$ the only two odd faces of $G$, then w.l.o.g., $\omega_1$ contains $a',a,x,b$ consecutively in this order, and $\omega_2$ contains $a'',a,x,d$ consecutively in this order (possibly $a'=b$ and possibly $a''=d$).

We define the graph
\[G':=G-a+a'a''.\]
Clearly in $G'$ every region is bounded by a cycle, hence $G'$ is a plane, $2$-connected, cubic graph. Its only odd regions are the one containing $x,b,d,a',a''$, and the external one. Note that these regions share the edge $a'a''$. If we can show that $G'$ is $3$-connected, we will obtain a contradiction via Lemma \ref{le:cuodd}. Suppose that $G'$ is not $3$-connected. Then there exist two regions of $G'$ that have two common non-adjacent vertices. By construction, one of these regions is the one containing $x,b,d,a',a''$. However this is impossible since $\{a,x,c\}$ is a $3$-cut in $G$.
\end{proof}

The following result constitutes a straightforward way to check that for certain graphs, the congraph is not planar.
\begin{lemma}
	\label{le:loop}
Let $G$ be a graph containing the two cycles
\[a_1=b_1,a_2,\dots,a_m,\qquad\text{and}\qquad a_1=b_1,b_2,\dots,b_n, \qquad m,n \text{ odd}\]
where $a_i\neq b_j$ for every $2\leq i\leq m$ and $2\leq j\leq n$. Then $\cg(G)$ is not planar.
\end{lemma}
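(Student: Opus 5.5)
The plan is to exhibit a subdivision of $K_5$ inside $\cg(G)$, built from the two odd cycles; Kuratowski's Theorem then gives non-planarity. Write $A$ for the cycle $a_1,a_2,\dots,a_m$ and $B$ for the cycle $b_1=a_1,b_2,\dots,b_n$, so $m,n\geq 3$ are odd. By hypothesis the vertex sets of $A$ and $B$ meet only in $a_1$. Two facts about $\cg(G)$ drive everything.

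\emph{First fact: the four neighbours of $a_1$ form a $K_4$.} The vertices $a_2,a_m,b_2,b_n$ are all neighbours of $a_1$ in $G$, and they are pairwise distinct. Hence they are pairwise adjacent in $\cg(G)$, since $a_1$ is a common neighbour of any two of them. I take these four vertices, together with $a_1$, as the five branch vertices of the $K_5$ subdivision. Six of its ten edges are then genuine edges of $\cg(G)$.

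\emph{Second fact: each odd cycle supplies two paths from $a_1$.} Along $A$, any two vertices at distance $2$ on the cycle have a common neighbour, namely the vertex between them. So $a_ia_{i+2}$ is an edge of $\cg(G)$ for every $i$, with indices taken modulo $m$. Because $m$ is odd, this yields two paths in $\cg(G)$:
\[a_1,a_3,a_5,\dots,a_m \qquad\text{and}\qquad a_1,a_{m-1},a_{m-3},\dots,a_4,a_2.\]
The first path uses only odd indices and the second only even indices, so they are internally disjoint. Their internal vertices avoid $a_2$ and $a_m$. In the case $m=3$ they are just the single edges $a_1a_3$ (common neighbour $a_2$) and $a_1a_2$ (common neighbour $a_3$). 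The same construction on $B$ gives internally disjoint paths from $a_1$ to $b_n$ and from $a_1$ to $b_2$, with all internal vertices in $B-a_1$.

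\emph{Assembling the subdivision.} The two paths from $A$ lie in $V(A)$ and the two from $B$ lie in $V(B)$. Since $a_i\neq b_j$ for $i,j\geq 2$, these four paths from $a_1$ to $a_m,a_2,b_n,b_2$ are pairwise internally disjoint. They also avoid the other branch vertices in their interiors. Combined with the $K_4$ on $\{a_2,a_m,b_2,b_n\}$, they form a subdivision of $K_5$ in $\cg(G)$, so $\cg(G)$ is not planar.

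The main point needing care is this disjointness bookkeeping. One has to check that the parity split of each odd cycle really separates the two paths, and that the degenerate case $m=3$ (or $n=3$) still yields genuine $\cg$-edges. Both checks are immediate from the definition of $\cg(G)$.
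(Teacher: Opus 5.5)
Your proposal is correct and is essentially the paper's proof: the paper likewise takes $a_1$ and $a_2,a_m,b_2,b_n$ as branch vertices, gets the $K_4$ from the common neighbour $a_1$, and joins $a_1$ to these four vertices by the parity-split paths $a_1,a_3,\dots,a_m$ and $a_1,a_{m-1},\dots,a_2$ and their analogues on the second cycle, obtaining a subdivision of $K_5$. Your extra bookkeeping on the disjointness (via $a_i\neq b_j$) and on the case $m=3$ only makes explicit what the paper leaves implicit.
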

\begin{proof}
In $\cg(G)$, the vertices $a_2,a_m,b_2,b_n$ are pairwise adjacent. Moreover, $\cg(G)$ contains the internally disjoint paths
\[x,a_3,a_5,\dots,a_m,\qquad x,a_{m-1},a_{m-3},\dots,a_2,\qquad x,b_3,b_5,\dots,b_n,\qquad x,b_{n-1},b_{n-3},\dots,b_2,\]
where $x:=a_1=b_1$, so that $\cg(G)$ contains a subdivision of $K_5$, hence $\cg(G)$ is not planar.
\end{proof}

The following is the last ingredient needed for the proofs of Theorems \ref{thm:2} and \ref{thm:0}.
\begin{prop}
	\label{p:na}
Let $G$ be a polyhedron such that $\Delta(G)\geq 4$, containing two non-adjacent odd faces. Then $\cg(G)$ is not planar.
\end{prop}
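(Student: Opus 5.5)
We argue by contradiction: assume $\cg(G)$ is planar. The aim is to exhibit two odd cycles in $G$ sharing exactly one vertex, and then invoke Lemma \ref{le:loop}.

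By Lemma \ref{le:5} we have $\Delta(G)=4$. Lemma \ref{le:main} gives the chain of $3$-cuts $\{a_i,x_i,c_i\}$, $1\leq i\leq n$, with $a_i,c_i$ of degree $3$. Definition \ref{def:0n} gives the end pieces $\overline{G_0}$ and $\overline{G_n}$. By Proposition \ref{p:oddface}, there is an odd face $\om$ of $G$ inside $\overline{G_0}$ and an odd face $\om'$ inside $\overline{G_n}$.

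First we locate two vertex-disjoint odd cycles. The faces $\om$ and $\om'$ are separated by the $3$-cut $\{a_1,x_1,c_1\}$, or by $\{a_n,x_n,c_n\}$. If they are disjoint, take $C_1:=\om$ and $C_2:=\om'$. If they meet, they can only meet in cut vertices, and then (when $n=1$) they share one vertex or one edge. The hypothesis supplies two non-adjacent odd faces. Combining it with the odd faces found in both end pieces, we choose two odd faces that share no edge. If they meet in a single vertex, Lemma \ref{le:loop} applies at once and gives the contradiction. Otherwise we have disjoint odd cycles $C_1$ and $C_2$.

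Next we turn the disjoint cycles into cycles meeting in one vertex. Take a shortest path $P$ in $G$ from $C_1$ to $C_2$, with endpoints $p_1\in C_1$ and $p_2\in C_2$. We then seek a minor $H$ of $G$ to which we apply Lemma \ref{le:cont} and Lemma \ref{le:loop}. Lemma \ref{le:cont} only permits contracting paths of degree-$2$ vertices by an even number of edges. So we contract $P$ down to length $1$ or $2$, according to its parity, preserving the parity-sensitive structure of $\cg$ in the way the proof of Lemma \ref{le:3cut} does. If $P$ has even length, we contract it to a single vertex $v$, preserving the parity of the cycles. This yields two odd cycles through $v$ that are otherwise disjoint, and Lemma \ref{le:loop} then gives a subdivision of $K_5$ in $\cg$ of the minor. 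If $P$ has odd length, we instead use the degree-$4$ vertex $x_1$, or a third path supplied by $3$-connectivity between $C_1$ and $C_2$. Going around $C_1$ one way or the other produces a closed walk of the required odd parity through a common vertex: we route from one cycle along $P$ and back along a second disjoint path $Q$, and since $C_1$ is odd, one of the two arcs of $C_1$ fixes the parity.

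The main obstacle is exactly this parity-adjusting step when $P$ is odd. We also need care that $\cg$ of a subgraph or minor is actually a minor of $\cg(G)$; this is only guaranteed for subgraphs and for the even contractions of Lemma \ref{le:cont}. If the parity argument gets stuck, the fallback is to build the $K_5$-subdivision in $\cg(G)$ directly. We would do this by alternating along $C_1$, $C_2$ and two internally disjoint $C_1C_2$-paths given by $2$-connectivity, exactly as in the proof of Lemma \ref{le:loop}: the neighbours of $p_1$ on $C_1$ and of $p_2$ on $C_2$ play the roles of $a_2,a_m,b_2,b_n$. A finite check of small configurations, similar to Figure \ref{f:xyz}, handles the residual cases.
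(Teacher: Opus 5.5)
\textbf{There is a genuine gap.} It sits in the step that carries the whole proof: passing from two vertex-disjoint odd cycles $C_1,C_2$ joined by a path $P$ to two odd cycles sharing a single vertex.

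Contracting $P$ to a point is not an operation under which $\cg$ passes to a minor. Lemma~\ref{le:cont} only shortens a run of degree-$2$ vertices by an even number of edges, and it never shortens the $p_1p_2$-path below length $2$. So $p_1$ and $p_2$ are never identified.

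No other device can reach that target either, because your configuration does not force non-planarity at all. Take $D=C_1\cup P\cup C_2$ with $P=p_1,v,p_2$. Then $\cg(D)$ consists of:
\begin{itemize}
\item the two cycles $\cg(C_1)$ and $\cg(C_2)$;
\item the vertex $v$, joined to the two neighbours of $p_1$ on $C_1$ and the two neighbours of $p_2$ on $C_2$;
\item the edge $p_1p_2$.
\end{itemize}
Without $p_1p_2$ this graph is planar, with $v$ a cut vertex and every vertex on the outer face. Hence $\cg(D)$ is planar.

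A more decisive example is the pentagonal prism. It is $3$-connected and has two disjoint pentagonal faces joined by five disjoint edges, yet its congraph is the pentagonal antiprism (see the remark after Proposition~\ref{p:bd2}). It has $\Delta=3$, so it is no counterexample to the proposition. But it shows that ``two disjoint odd cycles plus connecting paths, with $3$-connectivity'' carries no non-planarity. So neither the arc-parity trick with a second path $Q$ nor your fallback can succeed. In Lemma~\ref{le:loop} the vertices $a_2,a_m,b_2,b_n$ are pairwise adjacent in $\cg$ only because they are all neighbours of the one shared vertex. That is exactly what you lack when $p_1\neq p_2$.

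\textbf{The missing idea:} the degree-$4$ vertex must itself be the shared vertex, not an optional parity fix. The paper's argument runs as follows.
\begin{enumerate}
\item Take $x=x_1$ with neighbours $a,b,c,d$ in cyclic order, and the four faces $\xi_{ab},\xi_{bc},\xi_{cd},\xi_{da}$ at $x$. Opposite faces at $x$ meet only in $x$. So if $\xi_{ab},\xi_{cd}$ are both odd, or $\xi_{bc},\xi_{da}$ are both odd, Lemma~\ref{le:loop} finishes at once.
\item Otherwise, Proposition~\ref{p:oddface} together with the non-adjacency hypothesis (which excludes, e.g., $\co=\{\xi_{da},\xi_{ab}\}$) yields an odd face on one side of the $3$-cut $\{a_1,x_1,c_1\}$. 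This face is distinct from the faces at $x$ and from the external face.
\item Walk in $G^*$ from a face at $x$ on that side up to the first odd face, avoiding the other faces at $x$ and the external face. The boundary of the union of the faces traversed is an odd cycle through $x$, confined to that side.
\item The second odd cycle through $x$ is either $\xi_{cd}$ itself, when it is odd, or is built the same way inside $\overline{G_0}$ starting from $\xi_{da}$.
\item The two cycles meet only at $x$, and Lemma~\ref{le:loop} gives the contradiction.
\end{enumerate}

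Your opening observation, that two odd faces meeting in a single vertex already suffice, is correct and is part of this argument. The disjoint case, however, cannot be handled without routing both odd cycles through $x$.
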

\begin{proof}
By Lemma \ref{le:5} we may take $\Delta(G)=4$. We will use the notation of Lemma \ref{le:main} and Definition \ref{def:0n}. We write $x=x_1\in V(G)$ and $N_G(x)=\{a=a_1,b,c=c_1,d\}$ in this cyclic order around $x$. Call
\begin{equation}
	\label{eq:xi}
\xi_{ab},\ \xi_{bc},\ \xi_{cd},\ \xi_{da}
\end{equation}
the faces of $G$ containing respectively $a,x,b$; $b,x,c$; $c,x,d$; and $d,x,a$, as in Figure \ref{f:omega}.
\begin{figure}[ht]
	\centering
	\includegraphics[width=9.cm]{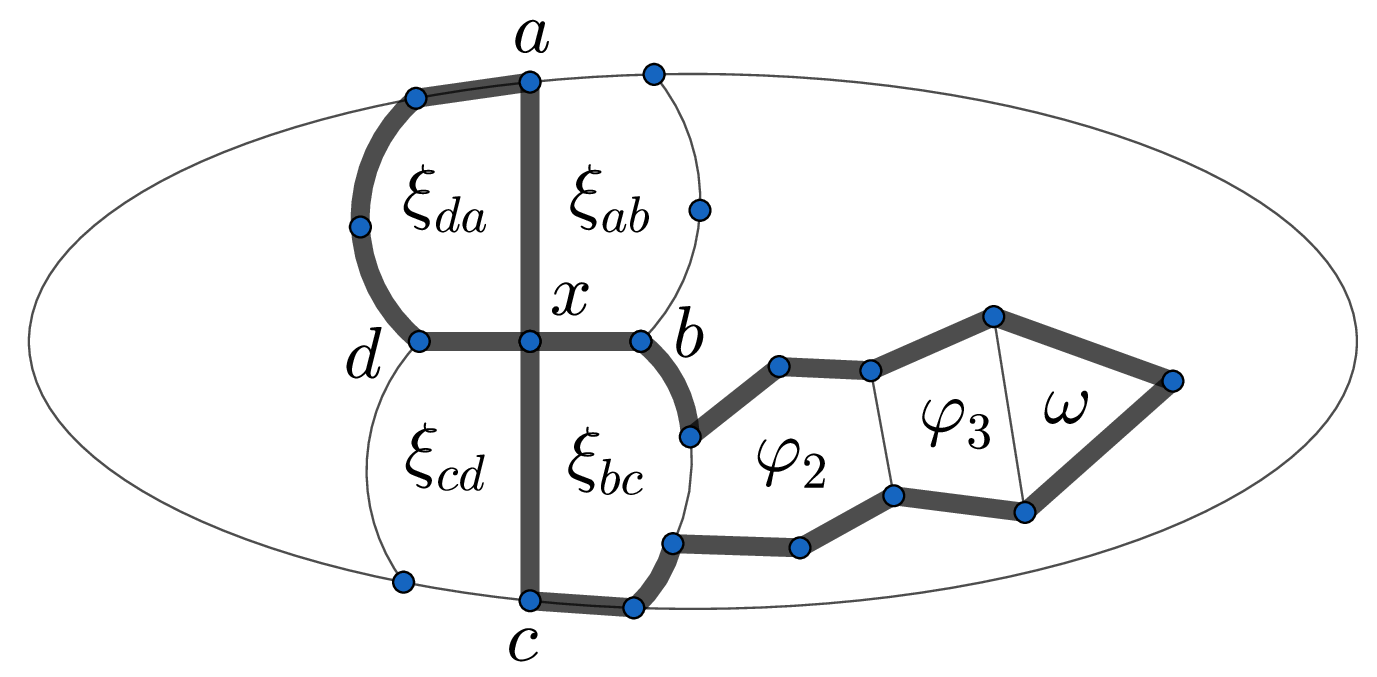}
	\caption{Proposition \ref{p:na}, with $j=m=4$, and $m'=1$. The thick lines delimit the cycles $C_0$ of length $5$ and $C_n$ of length $13$.}
	\label{f:omega}
\end{figure}

Suppose by contradiction that $\cg(G)$ is planar. Then by Proposition \ref{p:oddface}, the graph $\overline{G_0}$ contains at least one internal region $\omega_0$ corresponding to an odd face of $G$, and $\overline{G_{n}}$ contains at least one internal region $\omega_n$ corresponding to an odd face of $G$. By assumption, $\co\neq\{\xi_{da},\xi_{ab}\}$ and $\co\neq\{\xi_{cd},\xi_{bc}\}$. Further, $\xi_{da}$ and $\xi_{bc}$ cannot be both odd, and likewise $\xi_{ab}$ and $\xi_{cd}$ cannot be both odd, else by Lemma \ref{le:loop} $\cg(G)$ would be non-planar.

Thereby, we may assume w.l.o.g.\ that $\overline{G_{n}}$ has an internal odd region $\omega_n$ distinct from $\xi_{ab},\xi_{bc}$ and from the external region. We take $\{\eta,\eta'\}=\{\xi_{ab},\xi_{bc}\}$ with $\eta,\eta'$ to be determined later, and find in $G^*$ a simple path between the vertex corresponding to $\eta$ and the one corresponding to $\omega_n$, that does not include any of the vertices corresponding to $\eta'$, $\xi_{cd}$, $\xi_{da}$, nor the one corresponding to the external face of $G$. This yields in $G$ a list of faces
\[\eta=\varphi_1,\varphi_2,\dots,\omega=\varphi_{m},\qquad m\geq 1,\]
with $\varphi_i$ adjacent to $\varphi_{i+1}$ for every $1\leq i\leq m-1$. Calling $\varphi_j$ the face of odd length where $j\geq 1$ is minimal, we consider in $G$ the cycle $C_{n}$ such that $\varphi_1,\varphi_2,\dots,\varphi_j$ are the only faces internal to $C_n$. Since $\varphi_j$ is odd and $\varphi_1,\varphi_2,\dots,\varphi_{j-1}$ are even, then the length of $C_n$ is odd. All the vertices of $C_{n}$ belong to
\[(V\setminus V(\overline{G_{0}}))\cup\{x,z\},\qquad z=\begin{cases}
a & \eta=\xi_{ab} \\ c & \eta=\xi_{bc}.
\end{cases}\]
If $\xi_{cd}$ is odd, we take $\eta=\xi_{ab}$, so that the only common vertex of $C_n$ and the cycle delimiting $\xi_{ab}$ is $x$. By Lemma \ref{le:loop}, $\cg(G)$ is not planar.

If $\xi_{cd}$ is even, we take $\eta=\xi_{bc}$. By Proposition \ref{p:oddface}, there is an internal region $\omega_0$ in $\overline{G_{0}}$ corresponding to an odd face of $G$. We proceed as above, building a cycle $C_0$ of odd length around the faces \[\xi_{da}=\psi_1,\psi_2,\dots,\psi_{m'}, \qquad m'\geq 1,\]
with $\psi_{m'}$ odd, $\psi_1,\psi_2,\dots,\psi_{m'-1}$ even, $\psi_i$ adjacent to $\psi_{i+1}$ for every $1\leq i\leq m'-1$, and all the vertices of $C_0$ belonging to
\[V(\overline{G_0})\setminus\{c\}\]
($m'=1$ if and only if $\xi_{da}$ is itself odd). By construction, the only common vertex of the cycles $C_0$ and $C_{n}$ is $x$. By Lemma \ref{le:loop}, $\cg(G)$ is not planar.
\end{proof}

We are ready to conclude the proofs of Theorems \ref{thm:2} and \ref{thm:0}.

\begin{proof}[Proof of Theorem \ref{thm:2}]
By Lemma \ref{le:5} we may take $\Delta(G)=4$. If $G$ is bipartite, then $\cg(G)$ is not planar by Corollary \ref{cor:bip4}. If $\od(G^*)\simeq K_2$, then $\cg(G)$ is not planar by Corollary \ref{cor:K24}. If $G$ contains two non-adjacent odd faces, then $\cg(G)$ is not planar by Proposition \ref{p:na}.

The only remaining case is when the odd faces of $G$ are pairwise adjacent, and there are at least four of them. On the other hand by Kuratowski's Theorem for the polyhedron $G^*$, there may be no more than four pairwise adjacent faces in $G$. That is to say, the only remaining case is $\od(G^*)\simeq K_4$. With the notation of Lemma \ref{le:main} and Definition \ref{def:0n}, at least one odd face of $G$ lies in $\overline{G_0}$, and at least one in $\overline{G_{n}}$, and these two faces are adjacent, implying $n=1$ i.e., there is exactly one vertex $x=x_1$ of degree $4$ in $G$. With the notation of Proposition \ref{p:na}, w.l.o.g.\ $\xi_{da},\xi_{ab}$ are odd faces. Then by Lemma \ref{le:loop}, $\xi_{bc},\xi_{cd}$ are even, contradicting $\od(G^*)\simeq K_4$. This completes the proof of Theorem \ref{thm:2}.
\end{proof}

\begin{proof}[Proof of Theorem \ref{thm:0}]
The result follows from Theorem \ref{thm:2}, Proposition \ref{p:p2cb}, and Theorem \ref{thm:1}.
\end{proof}

We end this section with a result of independent interest, on the connectivity of the congraph of any $2$-connected, non-bipartite graph (without assuming planarity).
\begin{prop}
	\label{p:2conn}
	Let $G$ be a $2$-connected, non-bipartite graph. Then $\cg(G)$ is $2$-connected.
\end{prop}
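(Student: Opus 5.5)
The plan is to work with even walks in $G$ directly. The basic observation is the following: if $u=w_0,w_1,\dots,w_{2k}=v$ is a walk in $G$ of even length, then $w_{2i}$ and $w_{2i+2}$ have the common neighbour $w_{2i+1}$. Hence $w_{2i}w_{2i+2}\in E(\cg(G))$ whenever $w_{2i}\neq w_{2i+2}$, and consecutive repeated vertices can simply be skipped. So $w_0,w_2,\dots,w_{2k}$ gives a $uv$-walk, hence a $uv$-path, in $\cg(G)$. Only the even-indexed vertices of the walk appear in this path, and the odd-indexed vertices only serve as common neighbours.

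First I will show that $\cg(G)$ is connected. Since $G$ is $2$-connected and non-bipartite, it has at least $3$ vertices and contains an odd cycle $C$. For $u,v\in V(G)$, take a $uv$-path in $G$. If it has odd length, I modify it by walking from $u$ to a vertex of $C$, going once around $C$, and coming back along the same route. This changes the parity of the length, so we obtain a $uv$-walk of even length. The observation above then gives a $uv$-path in $\cg(G)$.

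For $2$-connectivity, I fix $x\in V(G)$ and show that $\cg(G)-x$ is connected. By the observation, it suffices to find, for any $u,v\neq x$, an even $uv$-walk in $G$ whose even-indexed vertices all differ from $x$; the vertex $x$ is allowed at odd positions. Since $G$ is $2$-connected, $G-x$ is connected. If $G-x$ is non-bipartite, the argument of the previous paragraph applied inside $G-x$ gives an even walk avoiding $x$ altogether, and we are done.

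The case I expect to be the main step is when $G-x$ is bipartite. Since $G-x$ is connected, its bipartition $\{V_1,V_2\}$ is unique. Because $G$ is non-bipartite, $x$ must have a neighbour $a\in V_1$ and a neighbour $b\in V_2$; otherwise adding $x$ to the opposite class would give a bipartition of $G$. If $u,v$ lie in the same class, any $uv$-path in $G-x$ has even length and avoids $x$. If, say, $u\in V_1$ and $v\in V_2$, I take a path from $u$ to $a$ in $G-x$, which has even length since both lie in $V_1$. I follow it with $a,x,b$, and then with a path from $b$ to $v$ in $G-x$, of even length since both lie in $V_2$. The total length is even, and $x$ occurs exactly once, at an odd position, since the portion up to $a$ has even length. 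This yields a $uv$-path in $\cg(G)-x$, so $\cg(G)$ is $2$-connected.
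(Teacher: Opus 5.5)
Your proof is correct, and it takes a genuinely different route from the paper's. The paper proves $2$-connectivity via the criterion that every two vertices lie on a common cycle. For given $u,v$ it constructs in $G$ one of three objects: an odd cycle through $u,v$; an even cycle $C''$ on which $u,v$ are at even distance; or an even closed walk $C'''$, traversing an arc $a_i,\dots,a_j$ of an odd cycle twice, on which $u,v$ are at even distance. It then takes every second vertex to obtain a cycle of $\cg(G)$. You instead check the definition directly, showing that $\cg(G)-x$ is connected for every $x$. Your key facts are that even walks in $G$ project onto walks in $\cg(G)$ through their even-indexed vertices, and that the split on whether $G-x$ is bipartite reduces everything to a single parity-fixing detour $a,x,b$ with $x$ at an odd position. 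What your approach buys is that repeated vertices are harmless, since you only need a walk. The cycle-based argument must instead verify that the alternate vertices of $C'''$ are pairwise distinct (they are, because the doubly traversed arc is run through with opposite parities) and that the result has length at least $3$. The latter fails in degenerate cases such as $G=K_4$ minus an edge with $u,v$ the non-adjacent pair: there $C''$ is a $4$-cycle whose alternate vertices are only $u,v$. Your argument also makes clear that connectivity of $\cg(G)$ needs only connectivity and non-bipartiteness of $G$, and it isolates exactly where $2$-connectivity is used. The paper's approach, in return, exhibits explicit cycles of $\cg(G)$ through any prescribed pair of vertices.
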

\begin{proof}
	To prove $2$-connectivity of $\cg(G)$, we will show that in $\cg(G)$ every pair of vertices lies on a cycle. Let $u,v\in V(G)$. If there exists in $G$ a cycle of odd length
	\[u=a_1,a_2,\dots,v=a_i,a_{i+1},\dots,a_n, \qquad \text{odd }n\geq 3,\]
	then in $\cg(G)$ we have the cycle \[a_1,a_3,\dots,a_{n},a_2,a_4,\dots,a_{n-1}\]
	containing $u,v$.
	
	Otherwise, as $G$ is $2$-connected and not bipartite, we start by finding an odd cycle $C$ in $G$. As $G$ is $2$-connected, there exist two internally disjoint paths connecting $u$ to two distinct vertices of $C$. Hence in fact $G$ has an odd cycle
	\[C': u=a_1,a_2,\dots,a_n,\qquad \text{odd }n\geq 3.\]
	By $2$-connectivity of $G$, there exist two internally disjoint paths connecting $v$ to two distinct vertices $a_i,a_j$ of $C'$, $1\leq i<j\leq n$. These two paths and the sub-path \[a_j,a_{j+1},\dots,a_n,a_1,a_2,\dots,a_i\]
	of $C'$ determine a cycle containing $u,v$,
	\[C'': c_1=a_j,c_2=a_{j+1},\dots,c_{n-j+1}=a_n,c_{n-j+2}=a_1=u,\dots,c_{n-j+1+i}=a_i,\dots,v,\dots,c_m,\]
	$m>n-j+1+i$. The reader may refer to Figure \ref{f:ac}. If the length $m$ of $C''$ is odd, we have found an odd cycle of $G$ containing $u,v$, and we may conclude as above. If $m$ is even, and the distance between $u,v$ on $C''$ is also even, then considering alternating vertices along $C''$ we obtain a cycle in $\cg(G)$ containing $u,v$.
	\begin{figure}[ht]
		\centering
		\includegraphics[width=5.cm]{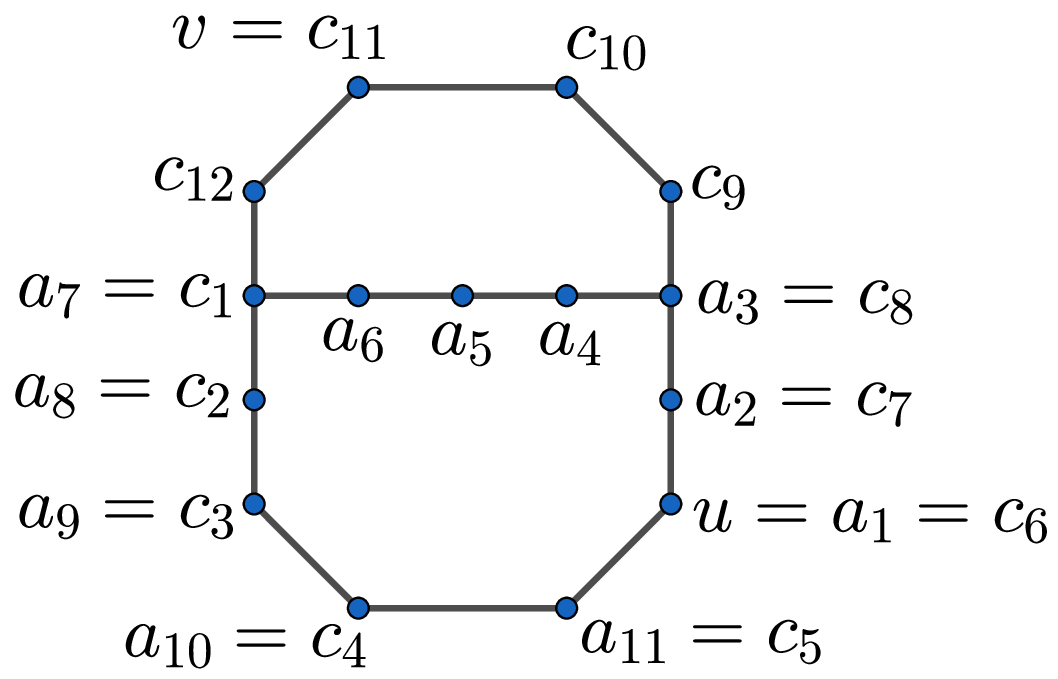}
		\caption{Illustration of Proposition \ref{p:2conn}, with $n=11$, $i=3$, $j=7$, and $m=12$. Here we have
		$C''': u,a_2,a_3,a_4,a_5,a_6,a_7,c_{12},v,c_{10},c_9,a_3,a_4,a_5,a_6,a_7,a_8,a_9,a_{10},a_{11}$, yielding the cycle $u,a_3,a_5,a_7,v,c_9,a_4,a_6,a_8,a_{10}$ in $\cg(G)$.}
		\label{f:ac}
	\end{figure}
	
	Lastly, if $m$ is even and the distance between $u,v$ on $C''$ is odd, we write $u=a_1=c_{n-j+2}$ and $v=c_{n-j+2h+1}$, with $n-j+1+i<n-j+2h+1\leq m$. We 
	consider the following circuit in $G$,
	\[C''': u=a_{1},a_{2},\dots,a_{j}=c_1,c_m,c_{m-1},\dots,v=c_{n-j+2h+1},\dots,c_{n-j+1+i}=a_i,a_{i+1},\dots,a_n.\]
	It has length $m+2(j-i)$, which is even, and the distance between $u,v$ along $C'''$ equals
	\[(j-1)+1+(m-(n-j+2h+1))=m-n+2j-2h-1\]
	which is also even. Then in $\cg(G)$ we can find a cycle containing $u,v$ by taking every second vertex along $C'''$.
\end{proof}

\section{Extremal results for the congraph of a polyhedron}
\label{sec:ext}

For every polyhedron $G$, we establish the maximal and minimal edges that $\cg(G)$ may have in the planar case, and find all extremal graphs. Following Theorem \ref{thm:0}, we consider separately the case of $G$ bipartite.

\begin{prop}
	\label{p:bd}
	Let $G$ be a bipartite polyhedron on $p$ vertices, other than the cube. Then
	\begin{equation}
		\label{eq:ecgb}
		2p\leq |E(\cg(G))|\leq 3p-12.
	\end{equation}
	The lower bound is attained if and only if $G$ is the $p/2$-gonal prism with $p$ a multiple of $4$. The upper bound is attained if and only if $G$ contains exactly six quadrangular faces, and the rest are hexagonal.
\end{prop}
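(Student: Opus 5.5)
Throughout, we work under the standing hypothesis of this section that $\cg(G)$ is planar; by Theorem \ref{thm:2} (equivalently Corollary \ref{cor:bip4}), $G$ is then cubic. The plan is to express $|E(\cg(G))|$ exactly in terms of the $4$-cycles of $G$, and then to bound the number of $4$-cycles using Euler's formula for the upper bound and a local counting or discharging argument for the lower bound.

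\textbf{Step 1: an edge-count formula.} Every vertex $w$ of the cubic graph $G$ contributes the $3$ pairs of its neighbours to $E(\cg(G))$. A pair $\{u,v\}$ is then counted $c(u,v)$ times, where $c(u,v)$ is the number of common neighbours of $u$ and $v$. Hence
\[|E(\cg(G))|=3p-\sum_{\{u,v\}}\bigl(c(u,v)-1\bigr)^+ .\]
If $c(u,v)=3$, then $u$, $v$ and their three common neighbours span $K_{2,3}$. In a cubic polyhedron this forces a $2$-cut, and hence does not occur unless $G$ is very small (these small cases can be checked directly; $K_4$ is not bipartite). Therefore $c(u,v)\le 2$, and pairs with $c(u,v)=2$ correspond exactly to opposite vertices of $4$-cycles of $G$. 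Each $4$-cycle contributes two such pairs, and distinct $4$-cycles give distinct pairs (unless there is a $K_{2,3}$). So
\[|E(\cg(G))|=3p-2q_4,\]
where $q_4$ is the number of $4$-cycles of $G$ (facial or not).

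\textbf{Step 2: upper bound.} Write $f_k$ for the number of $k$-gonal faces. Euler's formula for cubic polyhedra gives $\sum_k(6-k)f_k=12$. Since all faces are even, this becomes
\[f_4=6+\sum_{k\ge 8}\tfrac{k-6}{2}f_k\ge 6 .\]
Thus $q_4\ge f_4\ge 6$ and $|E(\cg(G))|\le 3p-12$. Equality holds iff $q_4=f_4=6$ and $f_k=0$ for $k\ge 8$, i.e.\ iff $G$ has exactly six quadrangles and all other faces are hexagons. In that case I still have to check that such $G$ has no non-facial $4$-cycle. Such a cycle would be a cyclic $4$-edge-cut. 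I expect a parity or curvature count on one side of the cut (the $6$ units of curvature split between the two sides) to exclude it, or to reduce to the cube, which is excluded in the statement.

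\textbf{Step 3: lower bound (main obstacle).} Here I need $q_4\le p/2$, with equality only for prisms. I would use a discharging argument. Each $4$-cycle sends charge $1/2$ to each of its $4$ vertices, and I aim to show that every vertex receives total charge at most $1$. A vertex receiving more than $1$ lies on at least three $4$-cycles. For a cubic vertex $v$ with neighbours $a,b,c$, this means that two of the neighbour pairs at $v$ close into $4$-cycles and a third $4$-cycle passes through $v$ as well. I would analyse this local configuration, using $3$-connectivity and bipartiteness, and show that it either forces the cube or allows the excess charge to be redistributed to a neighbouring vertex that lies on at most one $4$-cycle. This is the step I expect to be delicate.

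For the equality case $q_4=p/2$, every vertex must lie on exactly two $4$-cycles, which are faces. The quadrangles then form a cyclic band, so $G$ is the $p/2$-gonal prism. The prism is bipartite iff $p/2$ is even, i.e.\ iff $4\mid p$. Conversely, for such a prism we have $q_4=p/2$, so $|E(\cg(G))|=2p$.
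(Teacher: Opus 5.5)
\textbf{There is a genuine gap in Step 3.} Your Steps 1 and 2 are sound and essentially follow the paper. The paper reads off $|E(\cg(G))|=3p-2f_4$ from Lemma \ref{le:deg}, using $\cg(G)=\fcg(G)$ for cubic $G$, and then uses $f_4=6+\sum_{k\ge 8}\frac{k-6}{2}f_k$. Step 3, however, only announces the discharging, and the local rule you sketch cannot work.

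To see this, glue a second octahedron onto a face $[d,e,f]$ of the octahedron. Take $a,d$; $b,e$; $c,f$ to be the antipodal pairs. The result is an Eulerian triangulation on nine vertices:
\begin{itemize}
\item the vertices $a,b,c$ of the face opposite $[d,e,f]$, and the three new vertices, have degree $4$;
\item the vertices $d,e,f$ have degree $6$.
\end{itemize}
Its dual is a cubic bipartite polyhedron with $p=14$, six quadrangles and three hexagons. The vertex dual to $[a,b,c]$ lies on three quadrangles. Each of its three neighbours (dual to $[a,b,f]$, $[b,c,d]$, $[c,a,e]$) lies on exactly two. So no adjacent vertex lying on at most one $4$-cycle exists to absorb the excess charge.

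Your equality analysis also loses its premise once charge is moved. Equality $q_4=p/2$ then only says every final charge equals $1$. It no longer says every vertex lies on exactly two $4$-cycles.

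\textbf{The missing idea: the lower bound is global and needs no discharging.}
\begin{itemize}
\item First, $q_4=f_4$: every $4$-cycle of a bipartite cubic polyhedron bounds a face. A non-facial one would be chordless with vertices on both sides. It has only four pendant edges, so one side is attached to at most two vertices of the cycle, and these form a $2$-cut. This also closes the loose end in your Step 2, with no curvature count needed.
\item Second, $3p=2q=6f-12$ gives $p/2=f-2$. So $q_4\le p/2$ just says that $G$ has at least two non-quadrangular faces.
\item The paper proves this from the degree sum in $G^*$. If $f_4\ge f-1$, then $G^*$ has degree sequence $x,4,\dots,4$ with $x=2f-8\le f-1$. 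Hence $f\le 7$, which leaves only the cube.
\item In the equality case $f_4=f-2$, the degree sequence is $x,y,4,\dots,4$ with $x+y=2f-4$ and $x,y\le f-1$.
\item A dominating vertex $u$ is impossible. Indeed, $G^*-u$ would be maximal outerplanar, which would give $G^*$ at least two vertices of degree $3$.
\item Hence $x=y=f-2$, $G^*$ is the bipyramid, and $G$ is the $p/2$-gonal prism.
\end{itemize}
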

\begin{proof}%[Proof of Proposition \ref{p:bd}]
	Let $G$ be a bipartite, polyhedral $(p,q)$-graph on $f$ faces, other than the cube. By Theorem \ref{thm:0}, $G$ is cubic. We will denote by $f_i$ the quantity of faces of length $i$ for every $i\geq 4$. Then using the fact the $G^*$ is maximal planar and the handshaking lemma for $G^*$ we may write
	\begin{equation}
		\label{eq:pqf}
		3p=2q=6f-12
	\end{equation}
	thus
	\[6\sum_{i\geq 2}f_{2i}-12=6f-12=2q=\sum_{i\geq 2}2if_{2i},\]
	whence
	\begin{equation}
		\label{eq:f4}
		f_4=6+\sum_{i\geq 4}(i-3)f_{2i}.
	\end{equation}
	
	Next, since $G$ is a cubic, bipartite polyhedron, then by Lemma \ref{le:deg} the degree of each $v\in V=V(G)=V(\cg(G))$ in the graph $\cg(G)$ equals six minus the number of quadrangular faces of $G$ containing $v$. Therefore,
	\[\sum_{v\in V}\deg_{\cg(G)}(v)=6p-4f_4,\]
	thus by the handshaking lemma for $\cg(G)$,
	\begin{equation}
		\label{eq:ecg}
		|E(\cg(G))|=3p-2f_4.
	\end{equation}
	From \eqref{eq:f4} we get $f_4\geq 6$, hence by \eqref{eq:ecg} we obtain the upper bound in \eqref{eq:ecgb}. This bound is attained if and only if $f_4=6$, that is to say, if and only if the faces of $G$ comprise $6$ quadrangles and $f-6$ hexagons.
	
	It remains to prove the lower bound in \eqref{eq:ecgb}. We assume by contradiction that $f_4\geq f-1$. Then the degree sequence of the maximal planar $(f,3f-6)$-graph $G^*$ is
	\[x,4,4,\dots,4\]
	hence via the handshaking lemma $x+4(f-1)=6f-12$ i.e., $x=2f-8$. On the other hand, clearly any vertex of $G^*$ may have at most $f-1$ neighbours i.e., $x\leq f-1$, hence $f\leq 7$. The case $f=6$ corresponds to $G$ being the cube, excluded by hypothesis, while $f=7$ is impossible. It follows that $f_4\leq f-2$. Now by \eqref{eq:pqf}, $f_4\leq p/2$ and substituting into \eqref{eq:ecg} yields $|E(\cg(G))|\geq 2p$ as desired.
	
	Let us investigate which polyhedra $G$ satisfy $|E(\cg(G))|=2p$. As we have seen, equivalently we need to find $G$ such that $f_4=f-2$. Writing the degree sequence of $G^*$,
	\[x,y,4,4,\dots,4\]
	and applying \eqref{eq:f4} we see that $x+y=2f_4=2f-4$. Since $x,y\leq f-1$, the only options are $x=f-1$ and $y=f-3$, or $x=y=f-2$. If the triangulation $G^*$ has a dominating vertex $u$ of degree $f-1$, then $G^*-u$ is a maximal outerplanar graph, and as such it contains at least two vertices of degree $2$ \cite[Lemma 3.3]{maffucci2025regularity}, contradiction. Similarly, the only planar graph of degree sequence $f-2,f-2,4,4,\dots,4$ is the $f-2$-gonal bipyramid, thus $G$ is the $p/2$-gonal prism, where $p/2$ is even.
\end{proof}
If $G$ is the $p/2$-gonal prism with $p$ a multiple of $4$, then $\cg(G)$ is the disjoint union of two $p/4$-gonal antiprisms. We also note that $G$ attains the upper bound in \eqref{eq:ecgb} if and only if $\cg(G)$ is the disjoint union of two maximal planar graphs.

\begin{prop}
	\label{p:bd2}
	Let $G$ be a polyhedron on $p$ vertices, such that $\cg(G)$ is a polyhedron. Then
	\begin{equation}
		\label{eq:ecgo}
		2p\leq |E(\cg(G))|\leq 3p-6.
	\end{equation}
	The lower bound is attained if and only if $G$ is the $p/2$-gonal prism with $p$ odd. Apart from the tetrahedron, the upper bound is attained if and only if $G$ contains exactly $3$ quadrangular faces. In this scenario, either $\od(G)\simeq\overline{K_2}$ and the two odd faces of $G$ are both triangular, or
	$\od(G)\simeq K_4$ and $G$ contains one triangular and three pentagonal faces.
\end{prop}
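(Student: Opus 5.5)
By Theorem \ref{thm:0}, the hypothesis that $\cg(G)$ is a polyhedron (so connected) forces $G$ to be a cubic, non-bipartite polyhedron with $\od(G^*)\in\{\overline{K_2},K_4\}$. I would first dispose of the tetrahedron separately, since $\cg(K_4)\simeq K_4$. For the other cases I would reuse the counting from the proof of Proposition \ref{p:bd}. By the degree lemma invoked there (Lemma \ref{le:deg}), each $v$ has degree in $\cg(G)$ equal to six minus the number of quadrangular faces of $G$ containing $v$. The reason is that the six walks of length two starting at $v$ end at pairwise distinct vertices, except that the two walks around a quadrangle end at the same opposite vertex. A triangle through $v$ causes no coincidence once $G\neq K_4$. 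Summing over $v$ with the handshaking lemma gives again
\[|E(\cg(G))|=3p-2f_4 .\]
Together with $3p=2q=6f-12$, this reduces both bounds to bounds on $f_4$.

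\emph{Upper bound.} Since $\cg(G)$ is a polyhedron, hence planar on $p$ vertices, we have $|E(\cg(G))|\le 3p-6$, i.e.\ $f_4\ge 3$. Equality therefore holds if and only if $f_4=3$. To identify the odd faces, I would use Euler's relation for cubic polyhedra in the form $\sum_i (6-i)f_i=12$. With $f_4=3$ this becomes
\[\sum_{\text{odd } i}(6-i)f_i \;=\; 6+\sum_{\text{even } i\ge 8}(i-6)f_i \;\ge\; 6 .\]
Each odd face contributes at most $3$ (a triangle), and otherwise at most $1$ (a pentagon).
\begin{itemize}
\item If $\od(G^*)\simeq\overline{K_2}$, there are exactly two odd faces. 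The inequality forces both to be triangles and forces $f_i=0$ for even $i\ge 8$.
\item If $\od(G^*)\simeq K_4$, the four odd faces are pairwise adjacent. A cubic polyhedron other than $K_4$ has no two adjacent triangles, so at most one of them is a triangle, and their total contribution is at most $3+1+1+1=6$. Equality forces exactly one triangle and three pentagons.
\end{itemize}
Conversely, any such configuration has $f_4=3$ and hence attains $3p-6$.

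\emph{Lower bound.} Since $G$ has at least two odd faces, $f_4\le f-2=p/2$, which gives $|E(\cg(G))|\ge 2p$. Equality means exactly two odd faces and all other faces quadrangles. The dual triangulation $G^*$ then has degree sequence $x,y,4,\dots,4$ with $x+y=2f-4$ and $x,y\le f-1$. Exactly as in Proposition \ref{p:bd}, a dominating vertex is ruled out by the outerplanar argument. That leaves $x=y=f-2$, so $G^*$ is the $(f-2)$-gonal bipyramid and $G$ is the $p/2$-gonal prism. Here $p/2$ is odd because its two bases are the odd faces. Conversely, this prism has $\od(G^*)=\overline{K_2}$ (its two triangular/odd bases are non-adjacent), so Theorem \ref{thm:1} shows $\cg(G)$ is indeed a polyhedron, and the count gives exactly $2p$.

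The step I expect to need the most care is the degree formula in the non-bipartite setting. One must check that triangular faces, and the odd faces generally, produce no further coincidences among the six length-two walks, and that no walk returns to $v$ itself other than trivially. This holds because a cubic polyhedron other than $K_4$ has no two triangles sharing an edge and no multiple edges. Once that is secured, the rest is Euler-type bookkeeping as above.
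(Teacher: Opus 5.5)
Your proposal is correct for $G\not\simeq K_4$ and follows the paper's proof essentially step by step. The shared steps are:
\begin{itemize}
\item Theorem~\ref{thm:0} to get cubicity;
\item Lemma~\ref{le:deg} to get $|E(\cg(G))|=3p-2f_4$;
\item the Euler count $\sum_i(6-i)f_i=12$ together with Theorem~\ref{thm:1} for the equality case $f_4=3$;
\item $f_4\le f-2$ plus the bipyramid argument of Proposition~\ref{p:bd} for the lower bound.
\end{itemize}
You also rightly read the statement's ``$p$ odd'' as ``$p/2$ odd'', and $\od(G)$ as $\od(G^*)$.

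Two remarks. First, the tetrahedron cannot actually be ``disposed of''. Since $\cg(K_4)\simeq K_4$ has $6<8=2p$ edges, the lower bound in \eqref{eq:ecgo} is false for $K_4$ and must be stated for $G\not\simeq K_4$. The paper's proof has the same blind spot: it applies $3p-2f_4$, which would give $12$ there.

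Second, your stand-alone walk-counting justification of the degree formula is incomplete as written. Coincidences among the six length-two walks from $v$ come from arbitrary $4$-cycles through $v$, not just from quadrangular faces. Excluding adjacent triangles only rules out $4$-cycles with a chord. You also need that every chordless $4$-cycle in a cubic polyhedron bounds a face. This follows from $3$-edge-connectivity, since a non-facial one would leave an edge cut of size at most $2$. Citing Lemma~\ref{le:deg}, as the paper does, already covers this.
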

\begin{proof}[Proof of Proposition \ref{p:bd2}]
	By Theorem \ref{thm:0}, $G$ is cubic.	We have \eqref{eq:pqf}, thus
	\[6\sum_{i\geq 3}f_{i}-12=6f-12=2q=\sum_{i\geq 3}if_{i},\]
	whence
	\begin{equation}
		\label{eq:f3f4f5}
		3f_3+2f_4+f_5=12+\sum_{i\geq 7}(i-6)f_{i}.
	\end{equation}
	Moreover, recalling that in a cubic polyhedron other than the tetrahedron two triangular faces may never be adjacent, we deduce via Lemma \ref{le:deg} that the degree of each vertex in the graph $\cg(G)$ equals six minus the number of quadrangular faces of $G$ containing the vertex, thus as in \eqref{eq:ecg},
	\begin{equation}
		\label{eq:ecg2}
	|E(\cg(G))|=3p-2f_4.
	\end{equation}
	Then $\cg(G)$ is a maximal planar graph if and only if $f_4=3$. By \eqref{eq:f3f4f5} one has $3f_3+2f_4+f_5\geq 12$, which combined with $f_4=3$ yields
	\begin{equation}
		\label{eq:f3f5}
		3f_3+f_5\geq 6.
	\end{equation}
	On the other hand by Theorem \ref{thm:1}, $|\co(G)|\in\{2,4\}$. If $|\co(G)|=2$, then \eqref{eq:f3f5} implies $f_3=2$. If $|\co(G)|=4$, then by Theorem \ref{thm:1} we have $\od(G)\simeq K_4$ i.e., the odd faces of $G$ are pairwise adjacent. Since a cubic polyhedron $G\not\simeq K_4$ cannot contain a pair of adjacent triangular faces, then $f_3\leq 1$, which combined with \eqref{eq:f3f5} yields $f_5\geq 3$, so that ultimately in this case we have $f_3=1$ and $f_5=3$, and these are the only odd faces of $G$.
	
	To prove the lower bound in \eqref{eq:ecgo}, we wish to maximise $f_4$ in \eqref{eq:ecg2}. We note that $G$ contains at least two odd faces, thus the number of quadrangular faces will be at most $f-2$. As in Proposition \ref{p:bd}, this occurs if and only if $G^*$ is the $f-2$-gonal bipyramid. Since $G$ contains odd faces, $G^*$ has vertices of odd degree, thus $f-2$ is odd. We recall that $f-2=p/2$, so that $G$ is the $p/2$-gonal prism with $p/2$ odd.
\end{proof}

If $G$ is the $p/2$-gonal prism with $p/2$ odd, then $\cg(G)$ is the $p/2$-gonal antiprism.

\section{Constructions and extremal theorems for facecongraphs}
\label{sec:faceconpf}

In this section, $f_i$ is the number of $i$-gonal faces of a polyhedron $G$, and $q_{i,j}$ is the total number of common edges of $i$-gonal and $j$-gonal faces, $3\leq i\leq j$.

\subsection{Number of edges of the facecongraph}

We start with a result about the degree of a vertex in $\fcg(G)$.
\begin{lemma}
	\label{le:deg}
	Let $G$ be a polyhedron, and $v\in V(G)$. Then
	\[\deg_G(v)\leq\deg_{\fcg(G)}(v)\leq 2\deg_G(v).\]
	The lower bound is attained if and only if $v$ lies only on quadrangular faces. The upper bound is attained if and only if $v$ lies on no quadrangular faces and on no pair of adjacent triangular faces.
\end{lemma}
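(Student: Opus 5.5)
Let $d=\deg_G(v)$, and let $v_1,\dots,v_d$ be the neighbours of $v$ in cyclic order. Let $\varphi_i$ be the face containing $v_{i-1},v,v_i$ (indices mod $d$). Since $G$ is a polyhedron, these $d$ faces are distinct cycles through $v$. The plan is to list, face by face, the candidate neighbours of $v$ in $\fcg(G)$.

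On the face $\varphi_i$ of length $\ell_i$, the vertices at distance exactly two from $v$ along the boundary cycle are the following.
\begin{itemize}
\item If $\ell_i\geq 5$, they are two distinct vertices $w_i^-$ (after $v_{i-1}$) and $w_i^+$ (after $v_i$).
\item If $\ell_i=4$, they coincide in the single vertex opposite to $v$.
\item If $\ell_i=3$, the two vertices at distance two along the cycle are $v_i$ and $v_{i-1}$ themselves.
\end{itemize}
In the triangular case $v$ and $v_i$ have exactly one vertex ($v_{i-1}$) between them on $\varphi_i$, so $vv_{i-1},vv_i\in E(\fcg(G))$. Thus each face $\varphi_i$ contributes a set $S_i$ of size $2,1,2$ according as $\ell_i\geq5$, $\ell_i=4$, $\ell_i=3$, and $N_{\fcg(G)}(v)=\bigcup_i S_i$. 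Every vertex of the whole boundary of a region at distance two from $v$ occurs this way, since any face containing $v$ is one of the $\varphi_i$.

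For the upper bound, $|\bigcup S_i|\leq\sum|S_i|\leq 2d$. Equality requires every $|S_i|=2$, so no quadrangular face contains $v$, and the $S_i$ must be pairwise disjoint.

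To analyse overlaps I will use the polyhedral property that two faces meeting in two vertices share an edge between them. A vertex in $S_i\cap S_j$ with $i\neq j$ is one of the following.
\begin{itemize}
\item A neighbour $v_k$ lying in two triangle sets. This happens precisely when $\varphi_k$ and $\varphi_{k+1}$ are both triangles, i.e.\ two adjacent triangular faces at $v$.
\item A neighbour $v_k$ lying in a triangle set and also in some $S_j$ as a distance-two vertex of a face of length $\geq5$. This would put $v_k$ on $\varphi_j$ at distance two from $v$ along its boundary, while $vv_k$ is an edge. Then $v,v_k$ are two vertices shared by $\varphi_j$ and the face of the edge $vv_k$, which is non-adjacent on $\varphi_j$. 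This is impossible for a polyhedron unless $\varphi_j$ contains the edge $vv_k$, i.e.\ $j\in\{k,k+1\}$, in which case $v_k$ is at distance one, not two.
\item A non-neighbour $w$ lying on two faces $\varphi_i,\varphi_j$ together with $v$. Then $v,w$ are two common vertices of two faces, forcing $vw\in E(G)$, a contradiction.
\end{itemize}
So the only overlaps come from adjacent triangles at $v$, giving the upper bound and its equality case. I expect the case analysis of overlaps to be the only delicate step, and the fact that faces sharing two vertices share an edge handles all of it.

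For the lower bound I will show $N_G(v)\cup$ (opposite vertices) gives at least $d$ distinct elements. The cleanest route is an injection from the faces $\varphi_i$ into $N_{\fcg(G)}(v)$ (there are $d$ faces).
\begin{itemize}
\item For a quadrangle, map $\varphi_i$ to its opposite vertex. These images are distinct, and they are non-neighbours of $v$ by the argument above.
\item For a face of length $\geq5$, map to $w_i^+$, again a distinct non-neighbour.
\item For a triangle, map to $v_i$. These images are distinct and are neighbours of $v$, so they do not collide with the previous images.
\end{itemize}
Hence $\deg_{\fcg(G)}(v)\geq d$.

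For equality, observe that any non-quadrangular face supplies two elements of $S_i$ while the injection used one. For a pentagon or larger face the second element $w_i^-$ is new, because it is a non-neighbour and distinct from all other images by the same two-common-vertices argument. For a triangle the second element is $v_{i-1}$, which is not already an image only if $\varphi_{i-1}$ is not a triangle. If every face at $v$ is a triangle, then $d\geq3$ and the count gives exactly $d$, contradicting the claim. So I must check this case: all-triangle faces give $N_{\fcg}(v)=N_G(v)$, of size $d$, so equality holds without quadrangles.

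The equality statement therefore needs care: the lower bound is attained when every face at $v$ is a quadrangle or a triangle, with triangles forming the appropriate configuration. I would recount precisely: $|\bigcup S_i|=d+(\#\text{faces of length}\geq5)+(\#\text{triangles }\varphi_i\text{ with }\varphi_{i-1}\text{ non-triangular})$. From this formula the equality cases follow, and I would reconcile it against the stated characterisation, noting that the all-triangle vertex is an edge case the statement must handle.
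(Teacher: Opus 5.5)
Your argument is correct, and your suspicion about the equality case is justified: the lemma is false as stated. If every face at $v$ is a triangle, then $N_{\fcg(G)}(v)=N_G(v)$, so $\deg_{\fcg(G)}(v)=\deg_G(v)$ even though $v$ lies on no quadrangle. Every vertex of the tetrahedron or the octahedron is an example, consistent with the paper's own remark that $\fcg(G)=G$ for maximal planar $G$.

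Rather than leaving this as something to ``reconcile'', state the conclusion your formula gives. Let $t$ be the number of triangles $\varphi_i$ whose predecessor $\varphi_{i-1}$ is not a triangle. Then $t=0$ exactly when the triangles at $v$ are none or all, while a vertex lying on both triangles and quadrangles has $t\geq 1$. So the lower bound is attained if and only if all faces at $v$ are quadrangles or all are triangles. Your upper bound and its equality case are fine as written.

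Your route is essentially the paper's: one contribution per quadrangle, two per other face, minus the neighbours counted twice by consecutive triangles. The difference is in the bookkeeping. The paper groups the triangles at $v$ into $n$ runs of lengths $j_h+1$ and subtracts $\sum_h j_h$, obtaining $\deg_{\fcg(G)}(v)=f_3(v)+f_4(v)+2f_{\geq 5}(v)+n$. When all $d$ faces at $v$ are triangles, the single run closes up cyclically and $d$ neighbours, not $d-1$, are double-counted. The paper's formula then returns $d+1$ instead of $d$, and this is exactly where its ``only if'' fails.

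Your correction term $t$ equals $n$ whenever some face at $v$ is not a triangle, and equals $0$ in the cyclic case, so it is right in all cases. Your explicit overlap analysis also supplies something the paper takes for granted: that the counted vertices on different faces are distinct and are not neighbours of $v$. You get this from the fact that two faces sharing two vertices share the edge joining them. The correction propagates to Corollary~\ref{cor:bds}: by connectivity, its lower bound is attained exactly for quadrangulations and triangulations.
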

\begin{proof}
	Call $d=\deg_G(v)$,
	\[u_1,u_2,\dots,u_d\]
	the neighbours of $v$ in $G$ listed in cyclic order, and
	\begin{equation}
		\label{eq:al}
		\alpha_1,\alpha_2,\dots,\alpha_d
	\end{equation}
	the faces of $G$ containing $v$, where $\alpha_i$ contains $u_i$ and $u_{i+1}$, taking the indices modulo $d$. Further, we may suppose that the triangular faces containing $v$ in $G$ are exactly
	\begin{equation}
		\label{eq:trfa}
		\alpha_{i_1},\alpha_{i_1+1},\dots,\alpha_{i_1+j_1},\alpha_{i_2},\alpha_{i_2+1},\dots,\alpha_{i_2+j_2},\dots,\alpha_{i_n},\alpha_{i_n+1},\dots,\alpha_{i_n+j_n}
	\end{equation}
	with $n\geq 0$, $j_h\geq 0$ for every $1\leq h\leq n$, and \eqref{eq:trfa} are all distinct. Call $f_3(v)$, $f_4(v)$, and $f_{\geq 5}(v)$ the number of faces containing $v$ of lengths respectively $3$, $4$, and at least $5$. We record that
	\begin{equation}
		\label{eq:f3v}
		f_3(v)=\sum_{h=1}^{n}(1+j_h)=n+\sum_{h=1}^{n}j_h.
	\end{equation}
	
	In $\fcg(G)$, we count the neighbours of $v$ as follows. Letting $\alpha$ be an element of \eqref{eq:al}, we count one neighbour of $v$ in $\fcg(G)$ if $\alpha$ is of length $4$ (the vertex at distance $2$ from $v$ on $\alpha$); we count two neighbours of $v$ in $\fcg(G)$ if $\alpha$ is of length at least $5$ (the two vertices at distance $2$ from $v$ on $\alpha$); we count two neighbours of $v$ in $\fcg(G)$ if $\alpha$ is of length $3$ (the two vertices on $\alpha$ distinct from $v$). In this fashion, we have counted twice the neighbours $u$ of $v$ in $G$ such that $uv$ lies on two triangular faces $\alpha_i$ and $\alpha_{i+1}$. Therefore,
	\[\deg_{\fcg(G)}(v)=f_4(v)+2f_{\geq 5}(v)+2f_3(v)-\sum_{h=1}^{n}j_h=f_3(v)+f_4(v)+2f_{\geq 5}(v)+n\]
	where we have used \eqref{eq:f3v}. We deduce the lower bound
	\[\deg_{\fcg(G)}(v)\geq f_3(v)+f_4(v)+f_{\geq 5}(v)=d\]
	and the upper bound
	\[\deg_{\fcg(G)}(v)\leq 2f_3(v)+2f_4(v)+2f_{\geq 5}(v)=2d,\]
	as claimed. The lower bound is attained if and only if $f_{\geq 5}(v)+n=0$ i.e., $v$ lies only on quadrangular faces in $G$. The upper bound is attained if and only if $\sum_{h=1}^{n}j_h+f_4(v)=0$ i.e., $v$ lies on no quadrangular faces in $G$, and \eqref{eq:al} contains no consecutive pairs of triangular faces.% In particular, the bounds are sharp.
\end{proof}

Summing the bounds of Lemma \ref{le:deg} over all vertices and applying the handshaking lemma, we deduce the following.
\begin{cor}
	\label{cor:bds}
Let $G$ be a polyhedron. Then
\[|E(G)|\leq|E(\fcg(G))|\leq 2|E(G)|.\]
The lower bound is attained if and only if $G$ is a quadrangulation. The upper bound is attained if and only if $G$ contains no quadrangular faces and no pair of adjacent triangular faces.
\end{cor}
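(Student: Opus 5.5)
This follows by summing the vertex inequalities of Lemma \ref{le:deg} over $V(G)$ and applying the handshaking lemma to both $G$ and $\fcg(G)$. First we recall that for a polyhedron $G$ the graph $\fcg(G)$ is simple, as observed after its definition. Hence
\[\sum_{v\in V}\deg_{\fcg(G)}(v)=2|E(\fcg(G))|,\qquad \sum_{v\in V}\deg_G(v)=2|E(G)|.\]
Summing $\deg_G(v)\leq\deg_{\fcg(G)}(v)\leq 2\deg_G(v)$ over all $v$ and dividing by $2$ then gives $|E(G)|\leq|E(\fcg(G))|\leq 2|E(G)|$ at once.

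For the equality cases, we use that a sum of termwise inequalities is an equality if and only if every term is an equality. The lower bound is therefore attained if and only if every vertex lies only on quadrangular faces, by the equality clause of Lemma \ref{le:deg}. Every face has at least one vertex, so this holds if and only if every face of $G$ is a quadrangle, i.e.\ $G$ is a quadrangulation.

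Likewise, the upper bound is attained if and only if no vertex lies on a quadrangular face, and no vertex lies on a pair of adjacent triangular faces. The only point needing care is the translation from vertex conditions to global ones. In Lemma \ref{le:deg} the phrase ``adjacent triangular faces at $v$'' refers to consecutive faces $\alpha_i,\alpha_{i+1}$ around $v$ sharing the edge $vu_{i+1}$. If two triangular faces of $G$ are adjacent, they share an edge, and at either endpoint of that edge they appear consecutively in the cyclic order, so the condition fails at that endpoint. Conversely, a failure at some $v$ exhibits two triangles sharing an edge incident to $v$. Similarly, any quadrangular face contains a vertex. So the vertexwise conditions hold for all $v$ exactly when $G$ has no quadrangular faces and no two adjacent triangular faces.

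There is no real obstacle here. The only subtle step is the one just described: checking that the local, vertex-based notion of ``consecutive triangular faces at $v$'' in Lemma \ref{le:deg} coincides with the global notion of adjacent triangular faces. This holds because, in a polyhedron, two faces sharing an edge $uv$ are consecutive in the rotation at both $u$ and $v$.
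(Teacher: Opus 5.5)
Your route is the paper's own: sum the vertex bounds of Lemma \ref{le:deg} and apply the handshaking lemma. The two inequalities are fine, and so is the upper-bound equality case, including your translation from ``consecutive triangles at $v$'' to ``adjacent triangular faces of $G$''.

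The lower-bound equality case, however, does not go through, and the ``only if'' you derive is actually false. Your step ``the lower bound is attained if and only if every vertex lies only on quadrangular faces'' uses the equality clause of Lemma \ref{le:deg}. That clause overlooks vertices all of whose faces are triangles. If $\alpha_i=[v,u_i,u_{i+1}]$ for every $i$, then each $u_i$ is counted from both $\alpha_{i-1}$ and $\alpha_i$. This gives $\deg_{\fcg(G)}(v)=2d-d=d=\deg_G(v)$. In the lemma's proof, the decomposition into runs of triangles misses the one extra double count that arises when the run closes up cyclically around $v$.

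Consequently every triangulation attains the lower bound. As the paper itself notes, $\fcg(G)=G$ for maximal planar $G$, so $K_4$ and the octahedron satisfy $|E(\fcg(G))|=|E(G)|$ without being quadrangulations.

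The correct local statement is that $\deg_{\fcg(G)}(v)=\deg_G(v)$ if and only if the faces at $v$ are all quadrangles or all triangles. Suppose an edge $uv$ had $u$ of the first kind and $v$ of the second. Then the two faces containing $uv$ would have to be quadrangles (seen from $u$) and triangles (seen from $v$), which is impossible. By connectivity of $G$, all vertices are therefore of the same kind. So the lower bound is attained exactly when $G$ is a quadrangulation \emph{or} a triangulation, and the statement's lower-bound clause must be corrected accordingly.

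You can cross-check this with Lemma \ref{le:efcg}:
\[
|E(\fcg(G))|-|E(G)|=\tfrac12\sum_{i\ge 5}if_i+\tfrac12\,(3f_3-2q_{3,3}),
\]
where $3f_3-2q_{3,3}$ counts the edges shared by a triangle and a non-triangular face. This difference vanishes if and only if there are no faces of length at least $5$ and no triangle is adjacent to a quadrangle. Since $G^*$ is connected, that means all faces are triangles or all faces are quadrangles.
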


We now write a closed formula for $|E(\fcg(G))|$. 
\begin{lemma}
	\label{le:efcg}
	Let $G$ be a polyhedron. Then
	\begin{equation}
		\label{eq:pcE}
		|E(\fcg(G))|=2|E(G)|-2f_4-q_{3,3}.
	\end{equation}
\end{lemma}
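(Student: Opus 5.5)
We sum over the vertices the exact degree formula obtained in the proof of Lemma \ref{le:deg}, and then apply the handshaking lemma in $\fcg(G)$. Recall that, with the notation of that proof, every $v\in V(G)$ satisfies
\[\deg_{\fcg(G)}(v)=f_4(v)+2f_{\geq 5}(v)+2f_3(v)-\sum_{h=1}^{n}j_h(v).\]
Here $\sum_h j_h(v)$ is exactly the number of edges $uv$ incident to $v$ that lie on two triangular faces. Indeed, each maximal run of $1+j_h$ consecutive triangles around $v$ contributes $j_h$ such shared edges. This remains true when all faces around $v$ are triangles, i.e.\ when the run closes up cyclically; in that case the count is simply $d$, and this wrap-around is the one point in the bookkeeping to check carefully. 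Since $\deg_G(v)=f_3(v)+f_4(v)+f_{\geq 5}(v)$, we may rewrite
\[\deg_{\fcg(G)}(v)=2\deg_G(v)-f_4(v)-t(v),\]
where $t(v)$ denotes the number of edges at $v$ common to two triangular faces.

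Summing over $v\in V(G)$ gives three contributions. First, $\sum_v 2\deg_G(v)=4|E(G)|$. Second, $\sum_v f_4(v)=4f_4$, since each quadrangular face is counted once at each of its four vertices. Third, $\sum_v t(v)=2q_{3,3}$, since each edge shared by two triangles is counted at both of its endpoints. Hence
\[2|E(\fcg(G))|=4|E(G)|-4f_4-2q_{3,3},\]
and dividing by $2$ yields \eqref{eq:pcE}.

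The main subtlety is verifying that the count of neighbours in the proof of Lemma \ref{le:deg} is exact, with no overcounting beyond the shared triangle edges. A neighbour $w$ of $v$ in $\fcg(G)$ could a priori be produced by two different faces around $v$. Two faces of a polyhedron meet in at most a vertex or an edge. So if two faces through $v$ both contain $w$, then $v$ and $w$ are adjacent, which means $w=u_i$ and the two faces are $\alpha_{i-1},\alpha_i$. In $\fcg(G)$, the vertex $u_i$ arises from a face only when that face is a triangle; otherwise $u_i$ is at distance $1$ from $v$ along the face, not distance $2$. Therefore the only double counts are the shared edges of consecutive triangles, which are exactly the ones subtracted. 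The distance-$2$ vertices of a quadrangle or a larger face are non-neighbours of $v$, and two distinct faces cannot share them together with $v$. With this established, the summation above is routine.
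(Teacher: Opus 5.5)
Your proof is correct, but it is organised differently from the paper's. The paper counts the edges of $\fcg(G)$ face by face. A face of length $i\geq 5$ contributes $i$ pairs, a quadrangle contributes $2$, and a triangle contributes $3$. One then subtracts $q_{3,3}$ for the triangle edges counted from both sides, and the handshaking lemma in $G^*$, $\sum_i if_i=2|E(G)|$, finishes the count. You instead prove the pointwise identity $\deg_{\fcg(G)}(v)=2\deg_G(v)-f_4(v)-t(v)$ and sum it over $V(G)$, using $\sum_v f_4(v)=4f_4$ and $\sum_v t(v)=2q_{3,3}$.

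The face-wise count is shorter, and it never has to deal with how the triangles are arranged cyclically around a vertex. Your route needs a little more bookkeeping, but it gives a genuinely local statement: the exact facecongraph degree of every vertex. Your last paragraph also makes explicit a fact the paper uses only tacitly: two distinct faces can produce the same pair only when they are triangles sharing that pair as an edge.

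One caution about presentation: do not derive the identity from the formula in the proof of Lemma \ref{le:deg} as written there. In that notation $f_3(v)=n+\sum_h j_h$, with $n\geq 1$ as soon as $v$ lies on a triangle. So when all $d$ faces at $v$ are triangles, $\sum_h j_h=d-1$, not $d$, and that formula returns $d+1$ instead of the correct value $d$. For instance, at the apex of the square pyramid it gives $5$, yet the facecongraph is $K_5$. The same slip affects the equality case of the lower bound in Lemma \ref{le:deg}. Your version with $t(v)$ is the right one, and your final paragraph proves it directly. The argument is complete once you state it with $t(v)$ from the start, rather than presenting $t(v)$ as a reinterpretation of $\sum_h j_h$.
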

\begin{proof}
	Within each face of $G$ of length $i\geq 5$, there are $i$ pairs of vertices with exactly one vertex between them, hence we count $i$ edges in $\fcg(G)$. For each quadrangular face of $G$, we count $2$ edges in $\fcg(G)$. We count an edge of $G$ for each edge of every triangular face of $G$, and deduct one for each pair of adjacent triangular faces, since we had counted these twice. Therefore, via the handshaking lemma for $G^*$,
	\[|E(\fcg(G))|=\sum_{i\geq 5}if_i+2f_4+3f_3-q_{3,3}=2|E(G)|-2f_4-q_{3,3}.\]
\end{proof}
Note that Lemma \ref{le:efcg} is consistent with the bounds in Corollary \ref{cor:bds}.

For cubic polyhedra $G$, from Lemma \ref{le:efcg} we recover \eqref{eq:ecg} and \eqref{eq:ecg2}, consistent with the fact that for cubic polyhedra, $\cg(G)$ and $\fcg(G)$ coincide.

\subsection{Proof of Theorem \ref{thm:maxpl} and Propositions \ref{p:min} and \ref{p:3456}}

We are ready to give an explicit lower bound for the edges of the facecongraph depending on the number of vertices.

\begin{proof}[Proof of Proposition \ref{p:min}]
Let $G$ be a $(p,q)$-polyhedron with $f$ faces. By Corollary \ref{cor:bds}, we have the lower bound $|E(\fcg(G))|\geq q$, with equality if and only if $G$ is a quadrangulation. For quadrangulations we have $q=2p-4$, hence the result.
\end{proof}

As an illustration, let $G$ be a $3$-connected quadrangulation of order $p$. Then $\fcg(G)=H\ \dot\cup \ H^*$, where $H$ is a planar, $2$-connected, $3$-edge-connected graph of order $p_H$ satisfying \eqref{eq:rad} \[G\simeq\calr(H)\simeq\calr(H^*).\]By Proposition \ref{p:min}, the total number of edges in $H$ and $H^*$ is $2p-4$, and since they are the dual of one another, each has $p-2$ edges. Further, since $H,H^*$ are planar, calling $p_H$ the order of $H$ we obtain respectively $p-2\leq 3p_H-6$ and $p-2\leq 3(p-p_H)-6$, whence
\[\frac{p+4}{3}\leq p_H\leq\frac{2p-4}{3}.\]
The lower/upper bound is attained if and only if $H$ is a triangulation and $H^*$ is cubic/vice versa.

We now turn to polyhedra $G$ such that $\fcg(G)$ is planar and has the maximal number of edges. As mentioned in the introduction, the maximal value of $|E(\fcg(G))|$ is attained by all the maximal planar graphs $G$ however, there are polyhedra $G$ other than the maximal planar graphs such that $\fcg(G)$ is maximal planar. A few examples were given in Figure \ref{f:mpl}. Let us prove that if $G$ is a polyhedron such that $\fcg(G)$ is maximal planar, then every face of $G$ has length at most $6$.

\begin{proof}[Proof of Proposition \ref{p:3456}]
Since $\fcg(G)$ is maximal planar, if $v,w\in V(G)$ satisfy $vw\in E(\fcg(G))$, then there exist $x\neq y\in V(G)$ such that
\[vx,\ xw,\ vy,\ yw\in E(\fcg(G)).\]

To prove the present proposition, we will argue by contradiction. Let
\[\varphi=[u_1,u_2,\dots,u_n], \qquad n\geq 7\]
be a face of $G$, taking the indices modulo $n$. For $1\leq i\leq n$, since $u_i,u_{i+2}$ lie on the same face of $G$ with exactly one vertex between them, then $u_iu_{i+2}\in E(\fcg(G))$. As argued above, there exist $x_i\neq y_i\in V(G)$ such that
\[u_ix_i,\ x_iu_{i+2},\ u_iy_i,\ y_iu_{i+2}\in E(\fcg(G)),\]
as in Figure \ref{f:7a}. Hence by definition of facecongraph, there exist faces
\[\alpha_i,\beta_i,\gamma_{i},\delta_i\]
such that $u_i,x_i$ lie on $\alpha_i$; $x_i,u_{i+2}$ lie on $\beta_{i}$; $u_i,y_i$ lie on $\gamma_i$; $y_i,u_{i+2}$ lie on $\delta_{i}$ with exactly one vertex between them in each case.
\begin{figure}[ht]
	\centering
	\begin{subfigure}{0.48\textwidth}
		\centering		\includegraphics[width=6.cm]{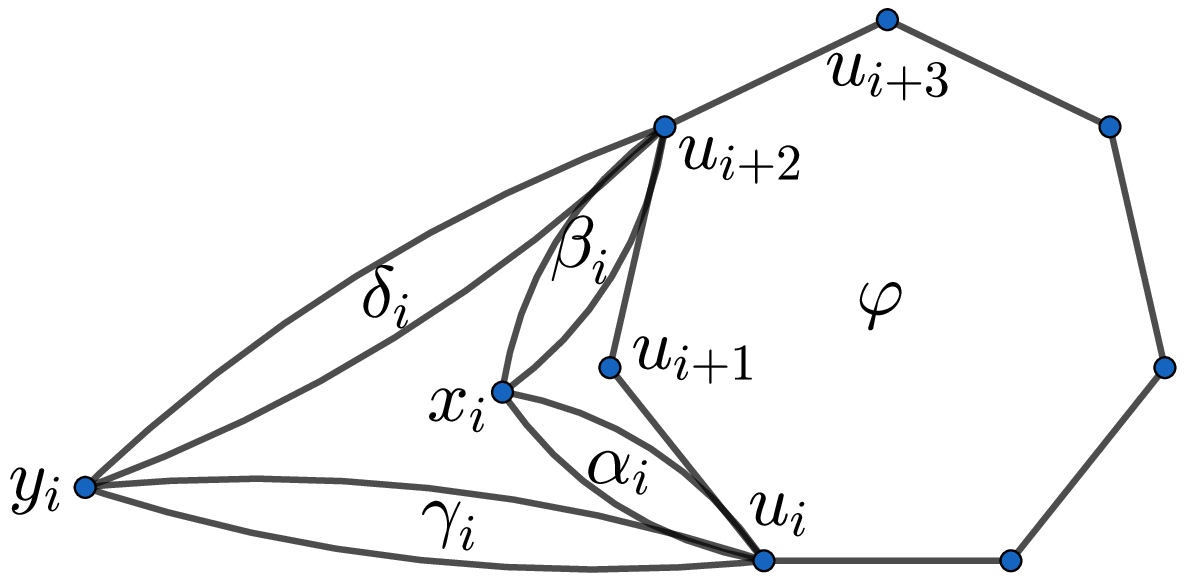}
		\caption{$G$ has a face $\varphi$ of length at least $7$.}
		\label{f:7a}
	\end{subfigure}
	\begin{subfigure}{0.48\textwidth}
		\centering
		\includegraphics[width=4.cm]{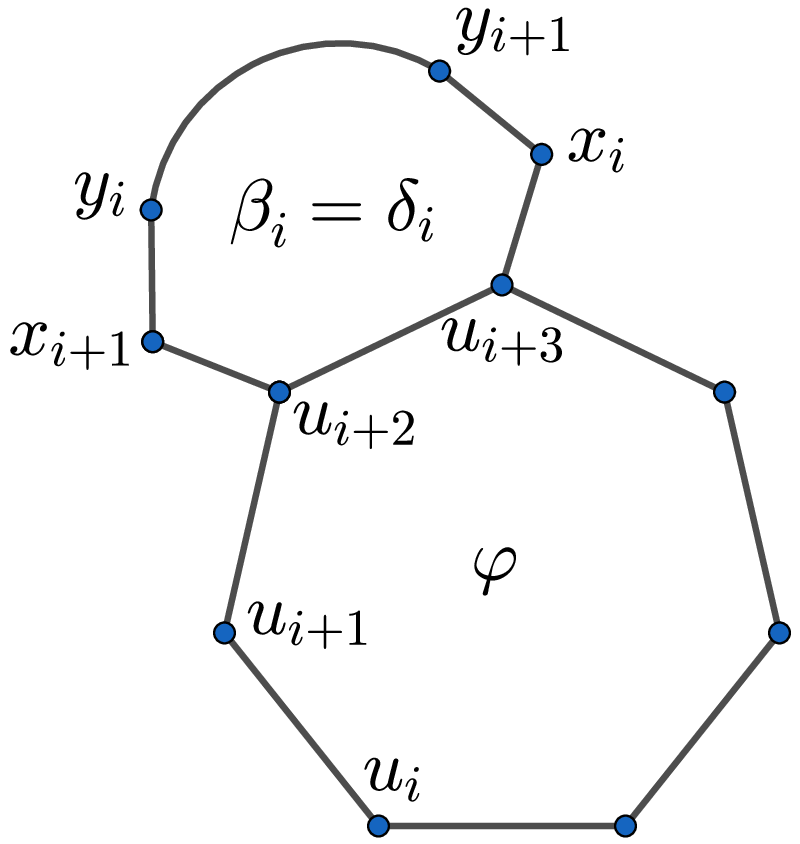}
		\caption{$\beta_i=\delta_i$.}
		\label{f:7b}
	\end{subfigure}
	\caption{Proposition \ref{p:3456}.}
	\label{f:7}
\end{figure}

We claim that $\alpha_i,\beta_i,\gamma_i,\delta_i\neq\varphi$. Indeed, if $\alpha_i=\varphi$, say, then $u_i,x_i$ lie on $\varphi$ with exactly one vertex between them, and as $x_i\neq u_{i+2}$, the only possibility is $x_i=u_{i-2}$. Thus $\beta_i$ contains $u_{i-2}$ and $u_{i+2}$. Therefore, either $\beta_i=\varphi$ so that $u_{i-2},u_{i+2}$ lie on $\varphi$ with exactly one vertex between them, impossible as $n\neq 6$, or $\beta_i\neq\varphi$ and $\beta_i,\varphi$ have two common vertices $u_{i-2},u_{i+2}$, thus as $G$ is a polyhedron, $u_{i-2}u_{i+2}\in E(G)$, impossible as $n\neq 5$. We deduce that indeed $\alpha_i,\beta_i,\gamma_i,\delta_i\neq\varphi$ for every $1\leq i\leq n$.

For $1\leq i\leq n$, by planarity of $G$ we have
\[\beta_{i+1},\delta_{i+1}\in\{\alpha_i,\beta_i,\gamma_i,\delta_i\}.\]
Moreover, $\beta_{i+1},\delta_{i+1}\not\in\{\alpha_i,\gamma_i\}$ e.g., if $\alpha_i=\beta_{i+1}$, then this face contains $u_i,u_{i+3}$, impossible. It follows that 
\[\beta_{i+1},\delta_{i+1}\in\{\beta_i,\delta_i\}.\]

Next, for $1\leq i\leq n$, we claim that $\beta_i\neq\delta_i$. Indeed, if $\beta_i=\delta_i$, then $\beta_i=\delta_i=\beta_{i+1}=\delta_{i+1}$, and this face, distinct from $\varphi$, contains $u_{i+2}$ and $u_{i+3}$, so that $\varphi$ and $\beta_i$ are adjacent. Moreover, $\beta_i$ contains $u_{i+2},u_{i+3},x_i,y_i,x_{i+1},y_{i+1}$ where each of the pairs $u_{i+2},x_i$; $u_{i+2},y_i$; $u_{i+3},x_{i+1}$; $u_{i+3},y_{i+1}$ have exactly one vertex between them on $\beta_i$. Recalling that $x_i\neq y_i$ for every $1\leq i\leq n$, we deduce that
\[y_i,x_{i+1},u_{i+2},u_{i+3},x_i,y_{i+1}\]
appear consecutively along $\beta_i$ in this order, as in Figure \ref{f:7b}. Recalling that $u_i,x_i$ lie on $\alpha_i$ and $u_i,y_i$ lie on $\gamma_i$ for every $1\leq i\leq n$, then by planarity of $G$ we deduce that 
$\alpha_i=\gamma_i=\alpha_{i+1}=\gamma_{i+1}$. Then $\alpha_i,\beta_i$ have four common vertices $x_i,y_i,x_{i+1},y_{i+1}$ hence in fact $\alpha_i=\beta_i$, but then $\alpha_i$ contains $u_i,u_{i+1},u_{i+2},u_{i+3}$ and is distinct from $\varphi$, impossible. We have shown that $\beta_i\neq\delta_i$ (and likewise $\alpha_i\neq\gamma_i$) for every $1\leq i\leq n$.

It follows that for every $1\leq i\leq n$ we have w.l.o.g., $\beta_i=\beta_{i+1}$ and $\delta_i=\delta_{i+1}$. Then $\beta_1=\beta_2=\dots=\beta_n$ is a face distinct from $\varphi$ containing $u_1,u_2,\dots,u_n$, contradiction.
\end{proof}
On the other hand, there exist polyhedra $G$ containing a face of arbitrarily long length and such that $\fcg(G)$ is planar, but not maximal planar. For instance, every $n$-gonal prism, $n\geq 7$, where by $3$-regularity, $\fcg(G)=\cg(G)$ and as we have seen in Section \ref{sec:ext}, $\fcg(G)$ is either an antiprism or the disjoint union of two antiprisms.

It is probably difficult to fully characterise the polyhedra $G$ such that $\fcg(G)$ is maximal planar. The final goal of this paper is to prove the remarkable characterisation and construction of the class with face length at most $4$, stated in Theorem \ref{thm:maxpl}. We begin with a couple of preparatory results.

\begin{lemma}
	\label{le:qij}
	Let $G$ be a $(p,q)$-polyhedron with $f$ faces. If $\fcg(G)$ is maximal planar, then
	\begin{equation}
		\label{eq:qij}
	3f=q+q_{3,3}+2f_4.
	\end{equation}
\end{lemma}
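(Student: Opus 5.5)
We combine the closed formula of Lemma \ref{le:efcg} with the edge count of a maximal planar graph on $p$ vertices, and then trade $p$ for $f$ via Euler's formula. Since $V(\fcg(G))=V(G)$, the graph $\fcg(G)$ has $p$ vertices. If it is maximal planar, then
\[|E(\fcg(G))|=3p-6.\]
Lemma \ref{le:efcg} gives
\[|E(\fcg(G))|=2q-2f_4-q_{3,3},\]
so equating the two expressions yields
\[2q-2f_4-q_{3,3}=3p-6.\]

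Next we eliminate $p$ with Euler's formula $p-q+f=2$ for the polyhedron $G$, in the form $3p-6=3q-3f$. Substituting, we obtain
\[2q-2f_4-q_{3,3}=3q-3f,\]
and rearranging gives
\[3f=q+q_{3,3}+2f_4,\]
which is exactly \eqref{eq:qij}.

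There is no substantial obstacle; the argument is a direct bookkeeping identity. The only point needing care is that $\fcg(G)$ is a simple graph on exactly $p$ vertices, which was noted earlier for polyhedra. We also use the standing convention that maximal planar graphs exclude $K_1,K_2,K_3$. This is automatic here because $p\geq 4$, so the count $3p-6$ applies.
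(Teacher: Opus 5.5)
Your proposal is correct and is essentially the paper's own proof: combine the closed formula of Lemma \ref{le:efcg} with $|E(\fcg(G))|=3p-6$, then eliminate $p$ using Euler's formula in the form $3p-6=3q-3f$. Your side remarks (that $\fcg(G)$ is simple on exactly $p$ vertices, and that $p\geq 4$) are the right sanity checks and are consistent with the paper.
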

\begin{proof}
By Lemma \ref{le:efcg},
\[2q=|E(\fcg(G))|+q_{3,3}+2f_4.\]
Since $\fcg(G)$ is maximal planar, we deduce
\[2q=3p-6+q_{3,3}+2f_4.\]
Using Euler's formula, we obtain \eqref{eq:qij}.
\end{proof}

\begin{lemma}
	\label{le:2sq}
Let $G$ be a polyhedron of maximal face length $4$, such that $\fcg(G)$ is maximal planar. Then each quadrangular face of $G$ is adjacent to exactly two quadrangular and two triangular faces.
\end{lemma}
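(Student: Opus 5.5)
The plan is a counting identity combined with a local obstruction. First I would turn Lemma \ref{le:qij} into an exact statement about adjacencies. Since every face of $G$ is a triangle or a quadrangle, counting face–edge incidences gives
\[3f_3=2q_{3,3}+q_{3,4},\qquad 4f_4=q_{3,4}+2q_{4,4},\qquad q=q_{3,3}+q_{3,4}+q_{4,4},\qquad f=f_3+f_4.\]
Substituting these into \eqref{eq:qij} gives
\[3f_3+3f_4=2q_{3,3}+q_{3,4}+q_{4,4}+2f_4=3f_3+q_{4,4}+2f_4,\]
so $q_{4,4}=f_4$. Then $4f_4=q_{3,4}+2q_{4,4}$ yields $q_{3,4}=2f_4$. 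Summing over all quadrangles, the number of quadrangular neighbours is $2q_{4,4}=2f_4$ and the number of triangular neighbours is $q_{3,4}=2f_4$. So on average a quadrangle is adjacent to exactly two quadrangles and two triangles.

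The statement then follows from a local inequality. If every quadrangle is adjacent to at most two quadrangles, the average forces exactly two for each, and the remaining two neighbours are triangles. (The alternative is to show at most two triangular neighbours; either inequality suffices.) I will aim for the bound on quadrangular neighbours. Suppose, for a contradiction, that $Q=[a,b,c,d]$ has quadrangular neighbours across at least three of its edges. Then two consecutive edges, say $ab$ and $bc$, border quadrangles $[a,b,e,f]$ and $[b,c,g,h]$.

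For the local analysis I will use that $\fcg(G)$ is a triangulation with the embedding inherited from $G$, as in Remark \ref{rem:bip}. For each vertex $v$, the neighbours of $v$ in $\fcg(G)$ are read off the faces around $v$ in cyclic order, as in the proof of Lemma \ref{le:deg}: a quadrangle contributes the opposite vertex, and a triangle contributes its two other vertices. Maximal planarity forces the link of $v$ to be a cycle in which consecutive contributions are adjacent in $\fcg(G)$. It also forces every edge of $\fcg(G)$ to lie in two triangular faces.

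Applying this at $a$ (or at $b$), consecutive quadrangles around the vertex contribute vertices such as $c$ and $e$. These must be adjacent in $\fcg(G)$, so $c$ and $e$ must lie on a common face with exactly one vertex between them. The candidate for the middle vertex is $b$. I would then check, using that the faces $Q$, $[a,b,e,f]$, $[b,c,g,h]$ already occupy three of the face slots around $b$, that no face of $G$ contains $c,b,e$ consecutively, unless $\deg b=3$ and the third face at $b$ is a triangle or quadrangle through $e,b,c$. That option contradicts the existence of $[b,c,g,h]$, or, via the polyhedral condition that two faces meet in at most an edge, it contradicts $3$-connectivity. The ordered faces around $b$ are the key data.

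I expect this local step to be the main obstacle. The ordering of faces around the shared vertices and the possible identifications among $e,f,g,h$ must be handled carefully. I would first try the cleanest form: look at the vertex $b$ shared by three quadrangles in the cyclic order $[a,b,e,f]$, $Q$, $[b,c,g,h]$, and show that the $\fcg(G)$-edge $ac$ (a diagonal of $Q$) cannot lie on two triangular faces of $\fcg(G)$. If that fails, I would switch to proving the complementary inequality, namely that no quadrangle has three triangular neighbours, by an analogous link argument, and conclude from the average in the same way.
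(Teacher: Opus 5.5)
Your counting step is correct and is exactly the paper's: from Lemma \ref{le:qij} you get $q_{4,4}=f_4$ and $q_{3,4}=2f_4$. You are also right that either one-sided inequality (at most two quadrangular neighbours, or at most two triangular ones) would finish the proof. The gap is the local step. It is not merely unfinished: both the tool and the data you plan to use cannot work.

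\textbf{The tool is false.} You claim that the link of $v$ in $\fcg(G)$ visits the contributions in the cyclic order of the faces of $G$ around $v$. This fails as soon as quadrangles are present. The two diagonals of a quadrangle cross, so the unique embedding of the triangulation $\fcg(G)$ is not inherited from $G$. (Remark \ref{rem:bip} only works because, in the bipartite case, the two diagonals fall into different components.) For example, take the triangular prism, whose facecongraph is the octahedron. Around $u_1$ the read-off order is $u_2,u_3,w_3,w_2$, yet $u_3w_3\notin E(\fcg(G))$, since that edge lies on two quadrangles.

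\textbf{The data cannot give a contradiction.} Your argument only uses two consecutive quadrangular neighbours of $Q$, i.e.\ a vertex $b$ lying on three quadrangles. That configuration genuinely occurs with $\fcg(G)$ maximal planar: it is the second scenario of Proposition \ref{p:3sq} (Figure \ref{f:tsb}). A concrete instance is the $7$-vertex polyhedron with faces
$[x,w_1,w_2,w_3]$, $[x,w_3,w_4,w_5]$, $[x,w_5,w_6,w_1]$, $[w_2,w_3,w_4]$, $[w_4,w_5,w_6]$, $[w_6,w_1,w_2]$, $[w_2,w_4,w_6]$.
Its facecongraph is planar with $15=3\cdot 7-6$ edges.

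Take $Q=[w_1,x,w_3,w_2]$, so $a=w_1$, $b=x$, $c=w_3$. Then $\deg b=3$ and $e=h=w_5$. The ``face through $e,b,c$'' is $[b,c,g,h]$ itself, so your degree-$3$ subcase yields no contradiction. Moreover $ac=w_1w_3$ does lie on two triangles of $\fcg(G)$, namely $w_1w_2w_3$ and $w_1w_3w_5$. So the ``cleanest form'' you propose is false.

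\textbf{What is missing.} A valid argument must use the third quadrangular neighbour, across $cd$. The paper does this as follows. Since $ab,bc,cd$ each lie on two quadrangles, none of them is an edge of $\fcg(G)$. Hence the two common $\fcg$-neighbours $w,y$ of $a,c$, and $x,z$ of $b,d$, lie outside $\{a,b,c,d\}$, joined through triangle edges or quadrangle diagonals drawn outside $Q$. Planarity of $G$ then forces the faces $[a,b,w,z]$, $[b,c,x,w]$, $[c,d,y,x]$, $[d,a,z,y]$. This gives $N_{\fcg(G)}(a)=\{c,w,y\}$ and $N_{\fcg(G)}(c)=\{a,w,y\}$. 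So $\{w,y\}$ separates $\fcg(G)$, contradicting the $3$-connectivity of a triangulation.
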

\begin{proof}
Let $G$ be a $(p,q)$-polyhedral graph with $f$ faces, $f_i$ be the number of $i$-gonal faces, and $q_{i,j}$ the total number of common edges of $i$-gonal and $j$-gonal faces, $3\leq i\leq j\leq 4$. By Lemma \ref{le:qij} one has
$3(f_3+f_4)=q+q_{3,3}+2f_4$,
whence
\begin{equation}
	\label{eq:comb1}
3f_3+f_4=q+q_{3,3}.
\end{equation}
On the other hand, the double-counting argument at the beginning of the proof of Proposition \ref{p:min} yields
\begin{equation}
	\label{eq:comb2}
3f_3=2q_{3,3}+q_{3,4}.
\end{equation}
Combining \eqref{eq:comb1}, \eqref{eq:comb2}, and $q=q_{3,3}+q_{3,4}+q_{4,4}$, we obtain
\begin{equation}
	\label{eq:q44}
f_4=q_{4,4}.
\end{equation}
Double-counting the edges of $G$ lying on quadrangular faces we obtain $4f_4=2q_{4,4}+q_{3,4}$, thus ultimately
\begin{equation}
	\label{eq:q34}
2f_4=q_{3,4}.
\end{equation}

Next, we employ the notation $q_{3,4}(\alpha)$ for the number of edges that a quadrangular face $\alpha$ shares with triangular faces in $G$. Then
\[2f_4=q_{3,4}=\sum_{\alpha\in F_4}q_{3,4}(\alpha),\]
where $F_4$ is the set of quadrangular faces of $G$. If we can show that in $G$ each quadrangular face is adjacent to at least two triangular faces, then
\[2f_4=\sum_{\alpha\in F_4}q_{3,4}(\alpha)\geq \sum_{\alpha\in F_4}2=2f_4,\]
implying that in fact in $G$ each quadrangular face is adjacent to exactly two quadrangular and two triangular faces, as claimed.

By contradiction, in the face $[a,b,c,d]$, let each of $ab$, $bc$, $cd$ lie on two quadrangular faces, as in Figure \ref{f:2sq1}. Since $ac\in E(\fcg(G))$, and $\fcg(G)$ is maximal planar, then there exist vertices $w,y$ such that
\begin{equation}
	\label{eq:wy}
wa,\ wc,\ ya,\ yc\in\ E(\fcg(G)).
\end{equation}
Likewise, there exist vertices $x,z$ such that
\begin{equation}
	\label{eq:xz}
xb,\ xd,\ zb,\ zd\in\ E(\fcg(G)).
\end{equation}
By definition of facecongraph, in $G$ each of \eqref{eq:wy} and \eqref{eq:xz} is either an edge of a triangular face or a diagonal of a quadrangular face. In particular,
\[\{a,b,c,d\}\cap\{w,x,y,z\}=\emptyset.\]
In $G$, any edge corresponding to \eqref{eq:wy}, \eqref{eq:xz}, and any quadrangular face with a diagonal corresponding to \eqref{eq:wy}, \eqref{eq:xz} must of course be drawn outside of the face $[a,b,c,d]$. By planarity of $G$, this is only possible if (w.l.o.g., up to permuting $x,z$) all of
\[[a,b,w,z],\ [b,c,x,w],\ [c,d,y,x],\ [d,a,z,y]\]
are faces in $G$ (Figure \ref{f:2sq2}). Hence $[a,b,c,d]$ is adjacent to these four quadrangular faces, so that
\[N_{\fcg(G)}(a)=\{c,w,y\}\qquad\text{and}\qquad N_{\fcg(G)}(c)=\{a,w,y\}.\]
This contradicts the $3$-connectivity of $\fcg(G)$.
\begin{figure}[ht]
	\centering
	\begin{subfigure}{0.45\textwidth}
		\centering		\includegraphics[width=3.75cm]{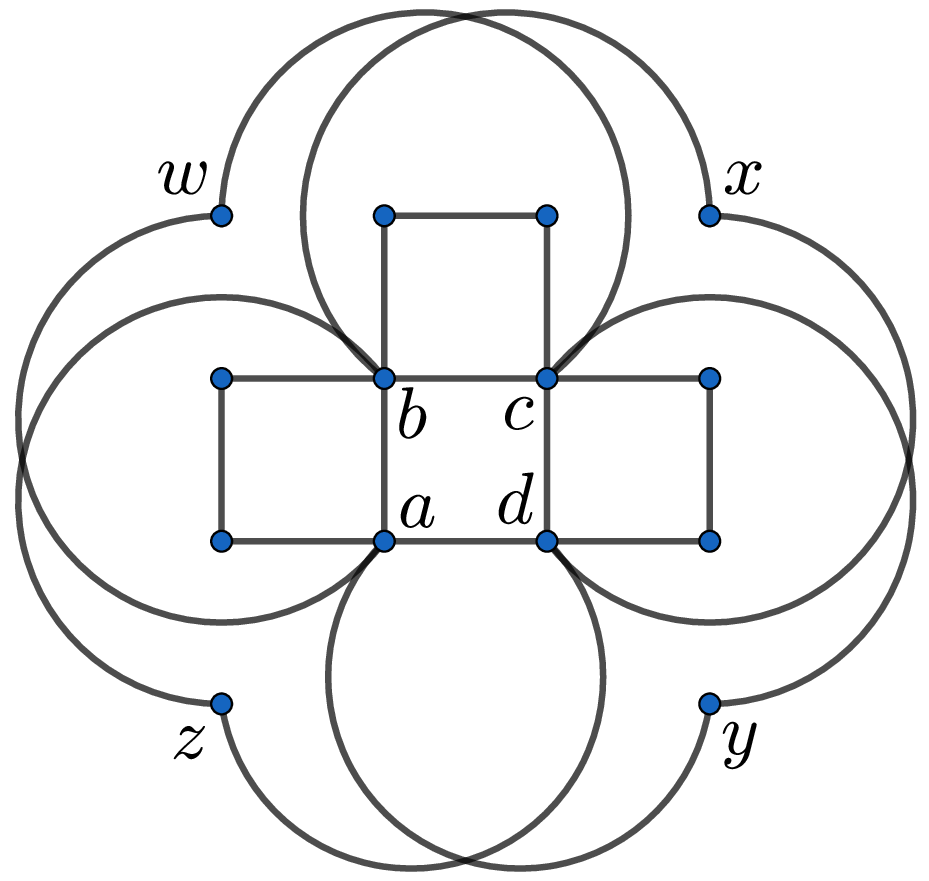}
		\caption{$[a,b,c,d]$ is adjacent to three quadrangular faces.}
		\label{f:2sq1}
	\end{subfigure}
	\hspace{0.5cm}
	\begin{subfigure}{0.45\textwidth}
		\centering
		\includegraphics[width=3.25cm]{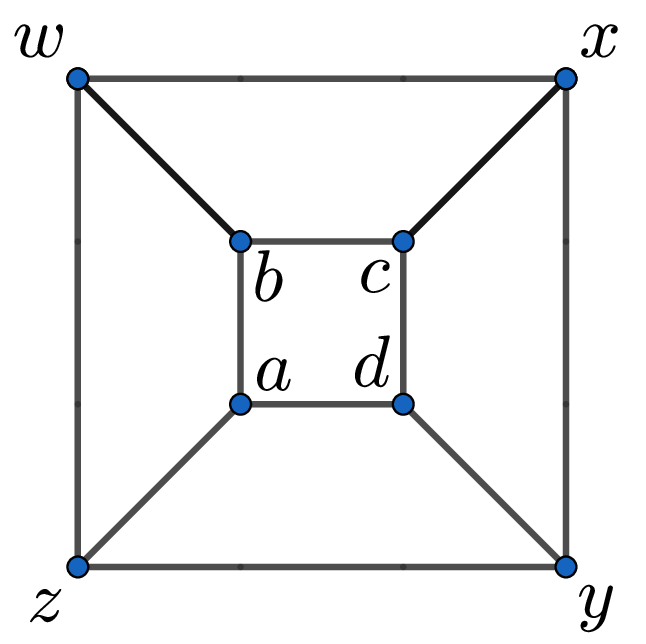}
		\caption{Hence $[a,b,w,z]$, $[b,c,x,w]$, $[c,d,y,x]$, $[d,a,z,y]$ are faces in $G$.}
		\label{f:2sq2}
	\end{subfigure}
	\caption{Lemma \ref{le:2sq}.}
	\label{f:2sq}
\end{figure}

\end{proof}

The following result is at the heart of the proof of Theorem \ref{thm:maxpl}.
\begin{prop}
	\label{p:3sq}
	Let $G$ be a polyhedron of maximal face length $4$, such that $\fcg(G)$ is maximal planar. Then each connected component of the subgraph of $G$ generated by the vertices lying on at least one quadrangular face is isomorphic to one of the two graphs in Figure \ref{f:tsab}. Moreover, in Figure \ref{f:tsb}, $[w_2,w_4,w_6]$ is a face of $G$.
	\begin{figure}[h!]
		\centering
		\begin{subfigure}{0.48\textwidth}
			\centering		\includegraphics[width=3.5cm]{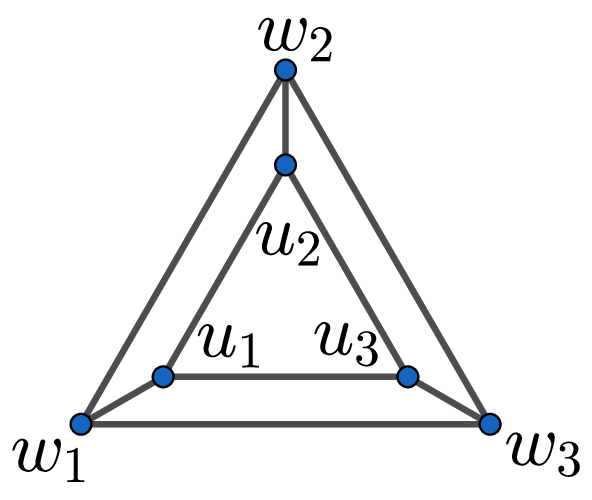}
			\caption{First scenario.}
			\label{f:tsa}
		\end{subfigure}
		\begin{subfigure}{0.48\textwidth}
			\centering
			\includegraphics[width=3.25cm]{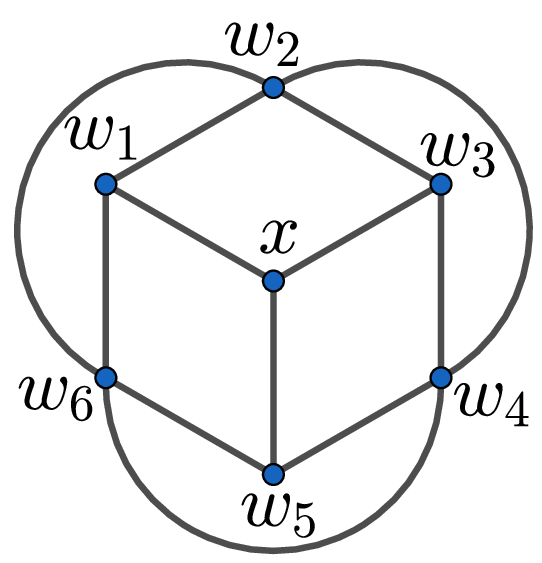}
			\caption{Second scenario.}
			\label{f:tsb}
		\end{subfigure}
		\caption{Proposition \ref{p:3sq}.}
		\label{f:tsab}
	\end{figure}
	
\end{prop}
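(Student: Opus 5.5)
We plan to use Lemma \ref{le:2sq} as the structural input: every quadrangular face of $G$ is adjacent to exactly two quadrangular faces and two triangular faces. Consider the graph $Q$ whose vertices are the quadrangular faces of $G$, two being joined when they share an edge. By Lemma \ref{le:2sq}, $Q$ is $2$-regular, so each component is a cycle of quadrangles $\alpha_1,\alpha_2,\dots,\alpha_k$, with $\alpha_i$ adjacent to $\alpha_{i+1}$ (indices mod $k$). We first show that, for each quadrangle, the two edges shared with quadrangles are opposite rather than consecutive. Once this is known, the cycle of quadrangles forms a \emph{band} (an annulus), bounded by two cycles $w_1,\dots,w_k$ and $v_1,\dots,v_k$, with $\alpha_i=[v_i,v_{i+1},w_{i+1},w_i]$. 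The triangles adjacent to the band are then attached along the two boundary cycles.

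\textbf{Step 1: the quadrangle--quadrangle edges of a face are opposite.} Suppose $[a,b,c,d]$ shares $ab$ and $bc$ with quadrangles, and shares $cd,da$ with triangles $[c,d,t]$ and $[d,a,s]$. Then $\deg_{\fcg(G)}$ near $b$ and $d$ should be small. We will argue as in the proof of Lemma \ref{le:2sq}. The edge $bd\in E(\fcg(G))$ needs two common $\fcg$-neighbours. The candidates for $b,d$ are forced by the local faces, and planarity then gives either a separating pair or a $2$-cut in $\fcg(G)$, contradicting $3$-connectivity of a maximal planar graph. If this argument fails, we would instead count via Lemma \ref{le:qij} locally around $b$.

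\textbf{Step 2: the band length.} In the band, the diagonals $v_iw_{i+1}$ and $v_{i+1}w_i$ lie in $\fcg(G)$. The triangles on the $w$-side have edges $w_iw_{i+1}$, and each such edge lies on a triangle $[w_i,w_{i+1},t_i]$. Maximal planarity requires each $\fcg$-edge, for instance $v_iw_{i+1}$, to lie on two triangles of $\fcg(G)$. Its neighbourhood in $\fcg(G)$ consists only of band vertices and triangle apexes. Checking the two $\fcg$-faces through $v_iw_{i+1}$ shows that one must be $v_i w_{i+1} w_{i-1}$ or similar, which forces the triangle apexes to coincide with band vertices. This leads to $k=3$ (the first scenario, a triangular prism-like band with the extra structure of Figure \ref{f:tsa}) or $k=6$ with alternate vertices $w_2,w_4,w_6$ (the second scenario). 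In the latter case, $w_2w_4,w_4w_6,w_6w_2$ must be edges of $G$ bounding triangles of the band. The claim that $[w_2,w_4,w_6]$ is a face follows because $\{w_2,w_4,w_6\}$ would otherwise be a separating triangle. We would then show that this creates a vertex of $\fcg(G)$ with only the three neighbours $w_2,w_4,w_6$, or a non-facial $\fcg$-triangle, contradicting the fact that $\fcg(G)$ is maximal planar.

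\textbf{Main obstacle.} The hard step is Step 2: pinning down $k$ and the identifications of triangle apexes purely from the requirement that every edge of $\fcg(G)$ lies on exactly two facial triangles, using polyhedrality to rule out coincidences. We plan to treat it by examining the link of a single band vertex $v_i$ in $\fcg(G)$. This link must be a cycle. Its members are $w_{i\pm1}$ (diagonals), $v_{i\pm1}$ (only if they lie on triangles), and the apexes of triangles at $v_i$. Requiring the link to close up into a cycle yields the finitely many configurations, which we would then check against Figure \ref{f:tsab}.
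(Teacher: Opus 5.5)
Your Step 1 is false, and the statement itself contains the counterexample. In the second scenario (Figure \ref{f:tsb}) the three quadrangles are $[x,w_1,w_2,w_3]$, $[x,w_3,w_4,w_5]$, $[x,w_5,w_6,w_1]$, all around a common vertex $x$ of degree $3$. The remaining faces are the triangles $[w_2,w_3,w_4]$, $[w_4,w_5,w_6]$, $[w_6,w_1,w_2]$ and the face $[w_2,w_4,w_6]$. Each quadrangle shares with the other two the \emph{consecutive} edges $xw_{2i-1}$ and $xw_{2i+1}$. Yet $\fcg(G)$ is an octahedron on $w_1,\dots,w_6$ with $x$ stacked into its face $w_2w_4w_6$, which is maximal planar.

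The local contradiction you look for in Step 1 therefore does not exist. Take $b=x$ and $d=w_2$: the $\fcg$-edge $xw_2$ has the two common neighbours $w_4,w_6$. As a consequence, your band model $\alpha_i=[v_i,v_{i+1},w_{i+1},w_i]$ cannot produce the second scenario at all. Your reading of that scenario as a band of $k=6$ quadrangles is also wrong: it has only three quadrangles, and the $6$ is the length of the outer cycle. A genuine band of six quadrangles is in fact impossible.

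What is missing is the correct dichotomy for a cycle $\alpha_1,\dots,\alpha_A$ of quadrangles. Either their union is an annulus between an inner cycle $C_u$ and an outer cycle $C_w$ with $U+W=2A$, or all $\alpha_i$ share one vertex $x$, with $\alpha_i=[x,w_{2i-1},w_{2i},w_{2i+1}]$.

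In the annulus case a quadrangle may have one, two or three vertices on $C_u$, so you cannot assume opposite shared edges.
\begin{itemize}
\item First establish that no $\fcg$-edge joins the inside of $C_u$ to the outside of $C_w$. Counting against maximal planarity then shows that all $2A$ diagonals lie in $\fcg(G)$.
\item Next, three vertices on $C_u$ is excluded. In that case the diagonal $u_1u_3$ runs outside $C_u$ and cuts $u_2$ off from $C_w$, whereas $u_2w_1$ is a diagonal.
\item Finally, a quadrangle with exactly two vertices on $C_u$ must exist, and it yields a $K_{3,3}$-minor in $\fcg(G)$ unless $W=3$.
\end{itemize}
Your Step 2, which is only a sketch (``or similar''), has no mechanism playing the role of this minor argument to bound $k$.

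In the fan case, which your proposal omits entirely, the argument runs as follows.
\begin{itemize}
\item Lemma \ref{le:deg} gives $\deg_{\fcg(G)}(x)=A$. The link of $x$ in the triangulation $\fcg(G)$ then forces $w_{2i}w_{2i+2}\in E(\fcg(G))$.
\item The link of $w_{2i+1}$ shows that these are edges of $G$, not diagonals of further quadrangles.
\item Hence $[w_2,w_4,\dots,w_W]$ is a face, so $W\le 8$. The case $W=8$ contradicts Lemma \ref{le:2sq}, since that quadrangle would be adjacent only to triangles.
\end{itemize}
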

\begin{proof}
Let $G$ be a polyhedron of maximal face length $4$, such that $\fcg(G)$ is maximal planar. By Lemma \ref{le:2sq}, in $G$ every quadrangular face is adjacent to exactly two quadrangular faces. Thereby, the quadrangular faces of $G$ correspond in $G^*$ to vertices that induce in $G^*$ a subgraph $H$ that is $2$-regular i.e., a disjoint union of cycles. For a fixed connected component of $H$, we denote the corresponding quadrangular faces of $G$ by
\begin{equation}
	\label{eq:fac}
	\alpha_1,\alpha_2,\dots,\alpha_A, \qquad A\geq 3,
\end{equation}
where for $1\leq i\leq A$, the face $\alpha_i$ is adjacent to $\alpha_{i+1}$ and $\alpha_{i-1}$, with the indices taken modulo $A$. The edges of the faces \eqref{eq:fac} that are external to all of \eqref{eq:fac} form a cycle
\begin{equation}
	\label{eq:w}
	C_w: w_1,w_2,\dots,w_W\qquad W\geq 3
\end{equation}
with indices taken modulo $W$. If there are any edges of \eqref{eq:fac} that are internal to all of \eqref{eq:fac}, then they form a cycle that we will call
\begin{equation}
	\label{eq:u}
	C_u: u_1,u_2,\dots,u_U, \qquad 3\leq U\leq W
\end{equation}
with indices taken modulo $U$, otherwise there exists a vertex $x$ lying on all of \eqref{eq:fac}.

For the moment we will suppose that $C_u$ exists. Note that
\[U+W=2A\qquad\text{thus}\qquad 3\leq U\leq A\leq W\leq 2A-3.\]
We record that each of $u_iu_{i+1}$, $1\leq i\leq U$ and each of $w_iw_{i+1}$, $1\leq i\leq W$ is an edge also in $\fcg(G)$, since it lies on one quadrangular and one triangular face in $G$. Thus $C_u,C_w$ are subgraphs of $\fcg(G)$ as well. Call 
\[G_u\qquad\text{and}\qquad G_w\]
respectively the subgraph of $G$ induced by the vertices inside and on $C_u$, and the subgraph of $G$ induced by the vertices outside and on $C_w$ (Figure \ref{f:ts1}). As the edges of $\fcg(G)$ correspond in $G$ either to edges of triangular faces or diagonals of quadrangular faces, then by planarity of $G$, in $\fcg(G)$ no vertex of $G_u$ may be adjacent to a vertex outside of $C_w$, and likewise no vertex of $G_w$ may be adjacent to a vertex inside of $C_u$. It follows that $\fcg(G_u)$ is a planar subgraph of $\fcg(G)$, one of its regions is delimited by $C_u$, and all other regions are triangular. Likewise, $\fcg(G_w)$ is a planar subgraph of $\fcg(G)$, one of its regions is delimited by $C_w$, and all other regions are triangular. Thereby,
\[E(\fcg(G))=E(\fcg(G_u))\ \dot\cup\  E(\fcg(G_w))\ \dot\cup\ E',\]
where $E'$ is a subset of the set of diagonals of the quadrangular faces \eqref{eq:fac} of $G$. In fact, as we know that $\fcg(G)$ is maximal planar, that $C_u$ has length $U$, that $C_w$ has length $W$, and that there are $2A=U+W$ diagonals of the faces \eqref{eq:fac}, then $E'$ is exactly the set of diagonals of \eqref{eq:fac}.
\begin{figure}[ht]
	\centering
	\includegraphics[width=4.cm]{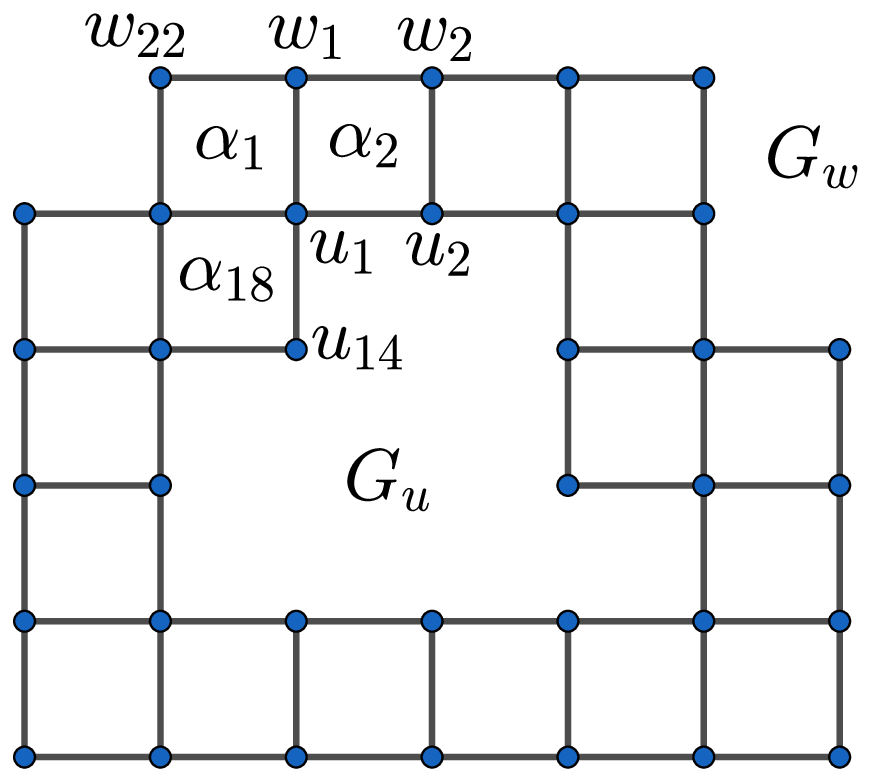}
	\caption{Illustration of the subgraphs $G_u$ and $G_w$ of $G$. Here $A=18$, $W=22$, and $U=14$. Note that $U+W=2A$.}
	\label{f:ts1}
\end{figure}

Each of the quadrangular faces \eqref{eq:fac} has either one, two, or three vertices in $C_u$. Suppose that one of these faces is
\[[u_1,u_2,u_3,w_1].\]
Then on one hand $u_1u_3\in E'$. As $E'\cap E(\fcg(G_u))=\emptyset$, this edge is external to $C_u$, thus by planarity of $\fcg(G)$, the vertex $u_2$ is not adjacent to any vertex of $C_w$. On the other hand $u_2w_1\in E'\subset E(\fcg(G))$, contradiction. Hence none of \eqref{eq:fac} contain three vertices of $C_u$.

As we are assuming that we have the cycle $C_u$ \eqref{eq:u} in $G$, then there exists an element of \eqref{eq:fac} containing two vertices of $C_u$, say
\[\alpha_2=[u_2,u_3,w_3,w_2],\]
as in Figure \ref{f:ts3}. Since none of \eqref{eq:fac} contain three vertices of $C_u$ then w.l.o.g., $w_1,w_2,u_2$ are consecutive vertices on $\alpha_1$, and $w_4,w_3,u_3$ are consecutive on $\alpha_3$. If $W\geq 4$, then $w_1\neq w_4$ and we reach a contradiction as $\fcg(G)$ contains a $K(3,3)$-minor, with partition given by
\[\{\{u_3,w_1,w_3\},\{u_2,w_2,w_4\}\}.\]
To see this, we consider in $\fcg(G)$ the edges
\[u_2u_3,\ w_1w_2,\ w_2w_3,\ w_3w_4,\ u_2w_1,\ u_2w_3,\ u_3w_2,\ u_3w_4\]
and the path
\[w_4,w_{5},\dots,w_W,w_1.\]
It follows that $W=3$ so that in fact $U=A=W=3$ i.e., we are in the scenario of Figure \ref{f:tsa}.
\begin{figure}[ht]
	\centering
	\includegraphics[width=3.cm]{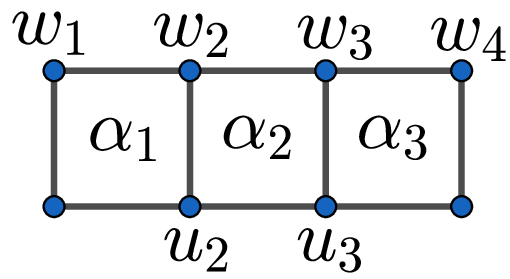}
	\caption{$\alpha_2=[u_2,u_3,w_3,w_2]$.}
	\label{f:ts3}
\end{figure}

It remains to inspect what happens for connected components of $H$ such that there is no cycle $C_u$ i.e., there is a common vertex $x$ to all the faces \eqref{eq:fac}. We may write w.l.o.g.
\[\alpha_i=[x,w_{2i-1},w_{2i},w_{2i+1}],\qquad 1\leq i\leq A=W/2\]
for even $W\geq 6$, as in Figure \ref{f:ts4}. Accordingly, $\fcg(G)$ has a subgraph as depicted in Figure \ref{f:ts5}.
\begin{figure}[ht]
	\centering
	\begin{subfigure}{0.48\textwidth}
		\centering		\includegraphics[width=3.5cm]{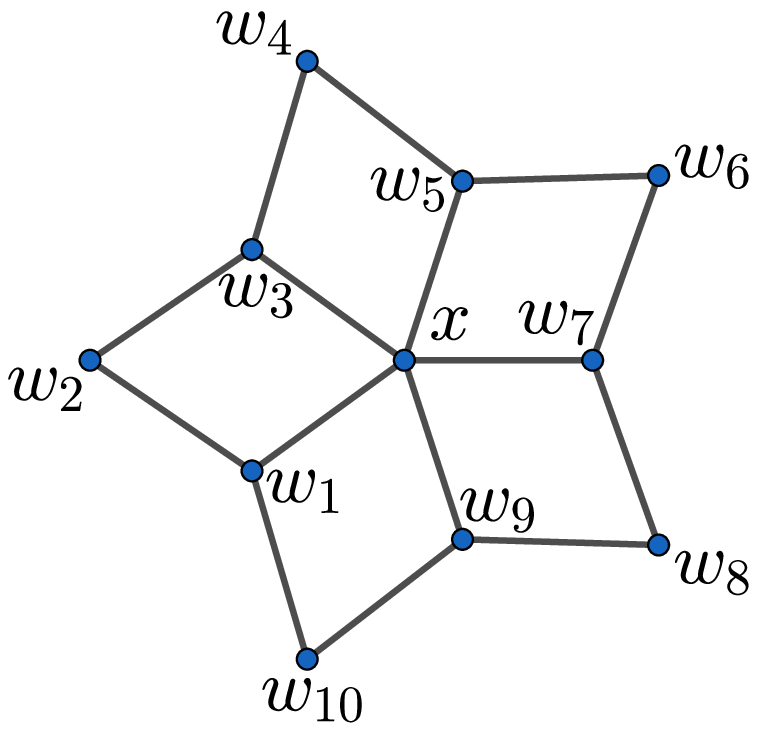}
		\caption{Subgraph of $G$.}
		\label{f:ts4}
	\end{subfigure}
	\begin{subfigure}{0.48\textwidth}
		\centering
		\includegraphics[width=3.5cm]{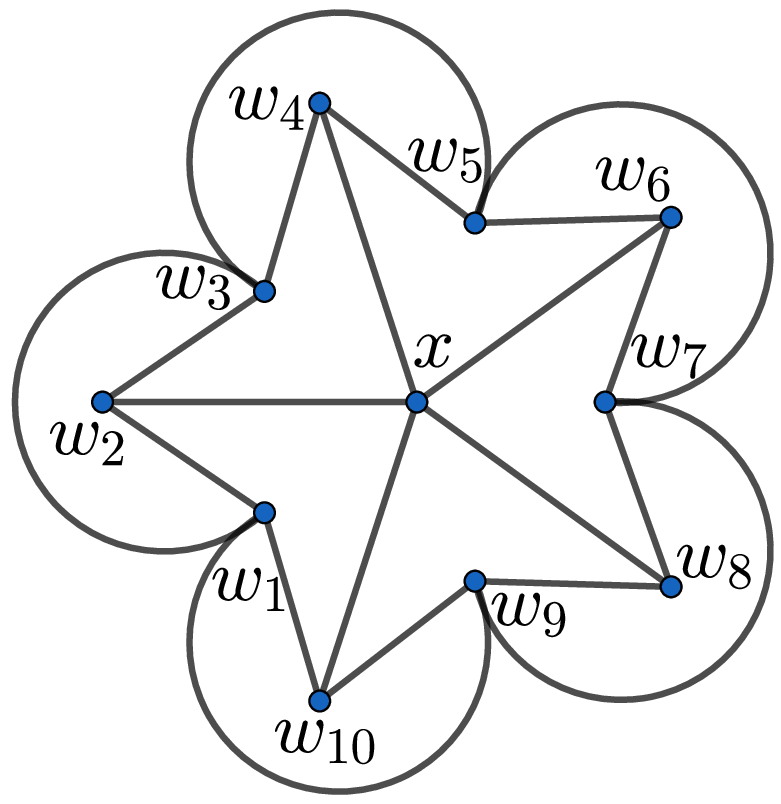}
		\caption{Subgraph of $\fcg(G)$.}
		\label{f:ts5}
	\end{subfigure}
	\caption{Case when $\alpha_1,\alpha_2,\dots,\alpha_A$ have a common vertex $x$, with $A=5$.}
	\label{f:ts45}
\end{figure}

In particular, $x,w_{2i},w_{2i+1},w_{2i+2}$ is a $4$-cycle in $\fcg(G)$ for every $1\leq i\leq A$. Note also that in $G$, the vertex $x$ has degree $A$ and lies on $A$ quadrangular faces, thus via Lemma \ref{le:deg},
\[\deg_{\fcg(G)}(x)=A.\]
Since $\fcg(G)$ is maximal planar, we deduce that $w_{2i}w_{2i+2}\in E(\fcg(G))$ for every $1\leq i\leq A$. Now each edge of $\fcg(G)$ corresponds in $G$ either to an edge of a triangular face, or a diagonal of a quadrangular face.

We claim that $w_{2i}w_{2i+2}\in E(G)$ for every $1\leq i\leq A$. By contradiction, suppose that $w_2w_4\not\in E(G)$ (we have chosen to fix $i=1$ to simplify the exposition). Then
\[[w_2,y,w_4,z]\]
is a quadrangular face of $G$, where by Lemma \ref{le:2sq} one has $y,z\not\in\{w_1,w_2,\dots,w_W\}$. The neighbours of $w_3$ in $\fcg(G)$ include $w_1,w_2,w_4,w_5$, and at least one more vertex $t$, as we have assumed that $w_2w_4\not\in E(G)$. The vertex $t$ lies inside of the quadrilateral $y,w_2,w_3,w_4$ in Figure \ref{f:ts6} (possibly $t=z$). Now since $\fcg(G)$ is maximal planar, the neighbours of each of its vertices $v$ induce a cycle $L_v$ of length equal to the degree of $v$ (that is to say, the neighbourhood graph of each $v$ is a pyramid centred at $v$). Considering $L_{w_3}$, in $\fcg(G)$ either $t$ is adjacent to one of $w_1,w_5$, impossible by planarity of $G$, or $w_1,w_2,w_4,w_5$ are consecutive in some order along $L_{w_3}$. Again by planarity of $G$, we deduce that $w_1w_5\in E(\fcg(G))$. On the other hand, $w_4w_6\in E(\fcg(G))$, contradicting the planarity of $G$. This reasoning for $i=1$ generalises to every $1\leq i\leq A$, hence indeed $w_{2i}w_{2i+2}\in E(\fcg(G))$ for every $1\leq i\leq A$, as in Figure \ref{f:ts78}.
\begin{figure}[ht]
	\centering
	\includegraphics[width=3.5cm]{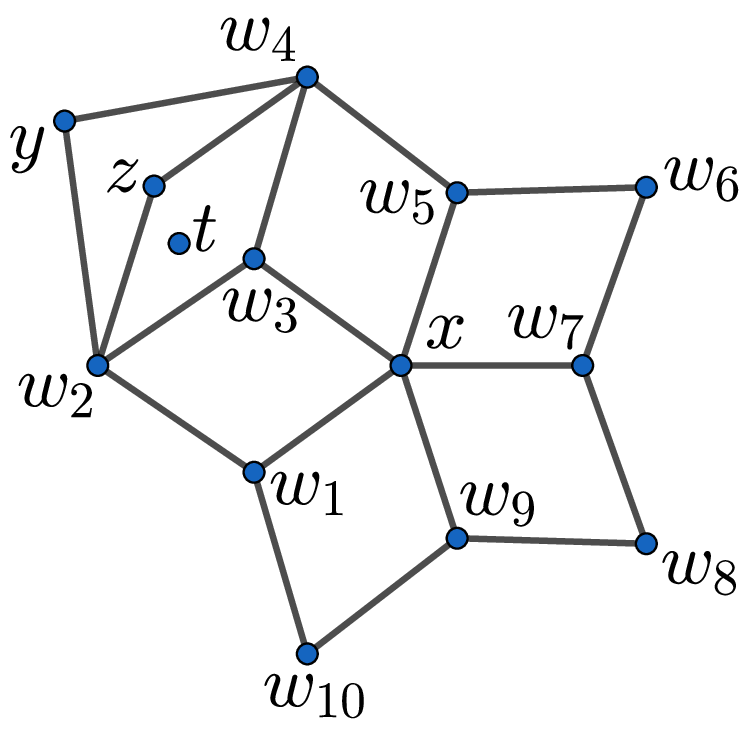}
	\caption{$w_2w_4\not\in E(G)$.}
	\label{f:ts6}
\end{figure}

\begin{figure}[ht]
	\centering
	\begin{subfigure}{0.48\textwidth}
		\centering		\includegraphics[width=3.5cm]{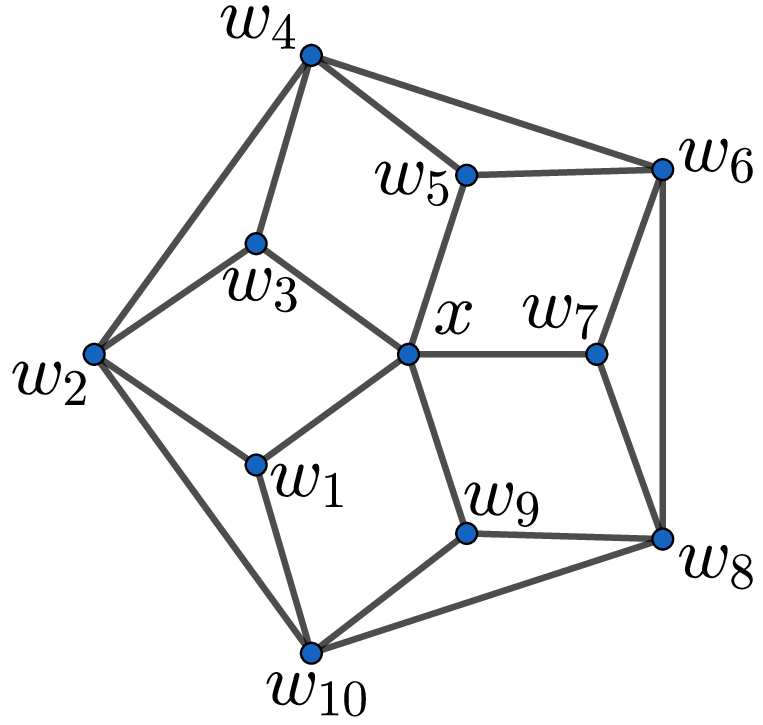}
		\caption{Subgraph of $G$.}
		\label{f:ts7}
	\end{subfigure}
	\begin{subfigure}{0.48\textwidth}
		\centering
		\includegraphics[width=3.5cm]{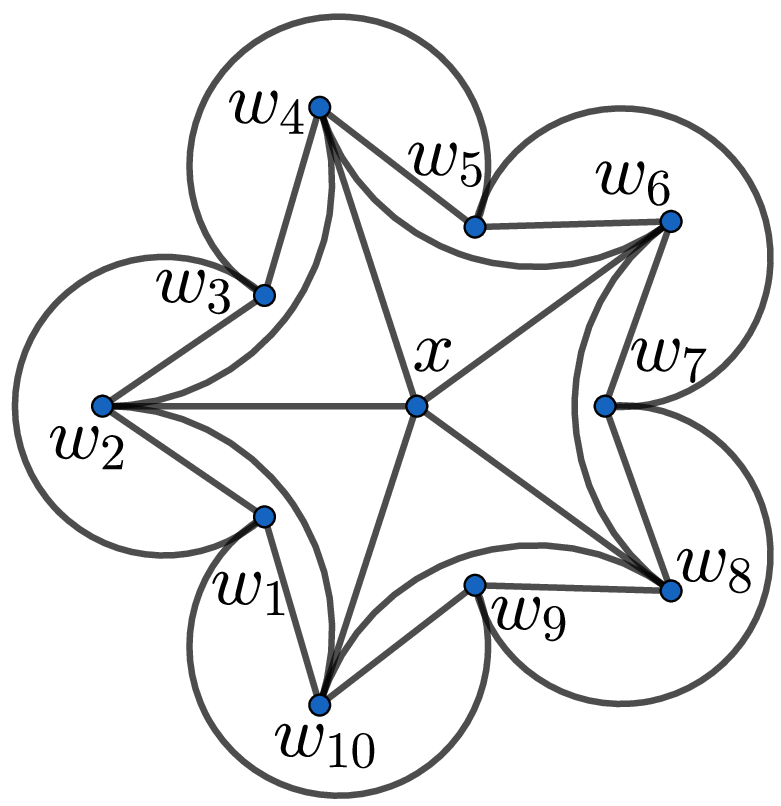}
		\caption{Subgraph of $\fcg(G)$.}
		\label{f:ts8}
	\end{subfigure}
	\caption{If $\alpha_1,\alpha_2,\dots,\alpha_A$ have a common vertex $x$, then $w_{2i}w_{2i+2}\in E(G)$ for every $1\leq i\leq A$.}
	\label{f:ts78}
\end{figure}

Then by planarity of $\fcg(G)$, there is no face of $\fcg(G)$ containing three of $w_1,w_2,\dots,w_W$ with even indices. It follows that in $G$, outside the cycle $w_2,w_4,\dots,w_W$, we cannot insert any vertices or edges without contradicting the planarity of $\fcg(G)$ or the $3$-connectivity of $G$. Hence
\[[w_2,w_4,\dots,w_W]\]
is a face in $G$. By assumption, the maximal face length of $G$ is $4$, thus $W\leq 8$. Moreover, for every $1\leq i\leq A$, in $G$ the edge $w_{2i}w_{2i+2}$ lies on at least one triangular face. Thus if $[w_2,w_4,\dots,w_W]$ is quadrangular, then it is adjacent only to triangular faces, contradicting Lemma \ref{le:2sq}. Therefore $W=6$, yielding the scenario in Figure \ref{f:tsb}.
\end{proof}

An illustration of Proposition \ref{p:3sq} is given by the leftmost graph in Figure \ref{f:mpl}. It contains three quadrangular faces arranged as in Figure \ref{f:tsa} and three arranged as in Figure \ref{f:tsb}.

We may now complete the proof of Theorem \ref{thm:maxpl}.
\begin{proof}[Proof of Theorem \ref{thm:maxpl}]
Given a separating triangle $T$ of $G$, we call $\inte(T)$ the subgraph of $G$ induced by the vertices inside and on $T$. Let $G$ be a polyhedron of maximal face length $4$, such that $\fcg(G)$ is maximal planar. By Proposition \ref{p:3sq}, and by finiteness of $G$, we may find a subgraph of $G$ as in Figures \ref{f:tr1} or \ref{f:tr2}, where $a,b,c$ is a separating triangle in $G$, and $\inte(a,b,c)$ contains exactly three quadrangular faces of $G$. By Proposition \ref{p:3sq}, in Figure \ref{f:tr1} each of the edges $ab,bc,ca$ lies on a triangular face, and in Figure \ref{f:tr2} each of $bc,ca$ lies on a triangular face. If a triangle $T$ contained inside $a,b,c$ is not a face of $G$, then $\inte(T)$ is a maximal planar graph. For each such $T$, we replace $\inte(T)$ with a triangle. This is the reverse transformation of $\ct_3$. Then, we replace $\inte(a,b,c)$ with a triangle. This is the reverse transformation of either $\ct_1$ or $\ct_2$.

We repeat these steps until there are no more quadrangular faces. The resulting graph is maximal planar, obtained from a triangle via $\ct_3$.

Vice versa, let $G$ be a polyhedron such that $\fcg(G)$ is maximal planar. Applying $\ct_3$ to $G$ has the effect of replacing a triangular face of $\fcg(G)$ with a maximal planar graph. Applying $\ct_1$ to $G$ has the effect of replacing a triangular face of $\fcg(G)$ with a copy of the octahedron. Applying $\ct_2$ to $G$ has the effect of replacing a triangular face of $\fcg(G)$ with a copy of the graph depicted in Figure \ref{f:tr3}. For all three transformations, the resulting graph is still maximal planar. For $\ct_2$, colouring the face in red as in Figure \ref{f:tr2} has the effect of ensuring that subsequent applications of $\ct_1,\ct_2,\ct_3$ do not break the planarity of $\fcg(G)$, since the red face will be a face in the final graph, consistent with Proposition \ref{p:3sq}.
\begin{figure}[ht]
	\centering
	\includegraphics[width=2.5cm]{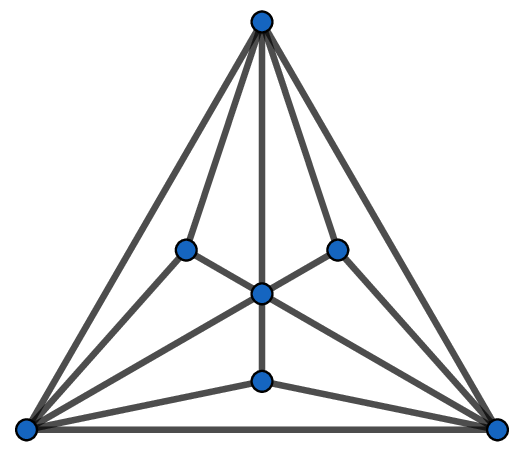}
	\caption{Applying $\ct_2$ to $G$ replaces a triangular face of $\fcg(G)$ with the depicted triangulation.}
	\label{f:tr3}
\end{figure}

To prove the final statement of the present theorem, let $G$ be a $(p,q)$-polyhedral graph of maximal face length $4$, $f_i$ be the number of $i$-gonal faces, and $q_{i,j}$ the total number of common edges of $i$-gonal and $j$-gonal faces, for $3\leq i\leq j\leq 4$. Supposing that $\fcg(G)$ is maximal planar, it follows from Proposition \ref{p:3sq} that $f_4$ is a multiple of $3$. By \eqref{eq:q44} and \eqref{eq:q34}, $q_{4,4}$ and $q_{3,4}$ are multiples of $3$. Now by \eqref{eq:comb2}, $q_{3,3}$ is a multiple of $3$, thus $q$ is a multiple of $3$.
\end{proof}

We close with a result of independent interest on the connectivity of facecongraphs.
\begin{prop}
	\label{p:K24}
	Let $G$ be a polyhedron satisfying $\od(G^*)\simeq K_2$. Then $\fcg(G)$ is not $3$-connected.
\end{prop}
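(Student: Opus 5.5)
The plan is to delete the common edge of the two odd faces, which leaves a bipartite plane graph whose facecongraph splits into two components by Remark \ref{rem:bip}, and then to track how $\fcg$ changes when the edge is restored. If $G$ is cubic, Lemma \ref{le:cuodd} already shows that no such $G$ exists, but the argument below does not need to separate this case.

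Let $\alpha,\beta$ be the two odd faces of $G$, adjacent along the edge $uv$. Write $\alpha=[u,v,a_1,\dots,a_A]$ and $\beta=[u,v,b_1,\dots,b_B]$, so that $a_1,b_1$ are adjacent to $v$ and $a_A,b_B$ are adjacent to $u$. Set $G':=G-uv$. Removing one edge from a polyhedron leaves a plane $2$-connected graph. The faces of $G'$ are the faces of $G$ other than $\alpha,\beta$, together with the merged region
\[\rho=[v,a_1,\dots,a_A,u,b_B,\dots,b_1].\]
The region $\rho$ has even length $|\alpha|+|\beta|-2$, and all other faces are even, so $G'$ is bipartite. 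I would check that $G'\not\simeq K_{2,\ell}$: $3$-connectivity of $G$ should rule this out, and otherwise the small cases can be handled directly. Then Remark \ref{rem:bip} gives that $\fcg(G')$ has exactly two components $G_1,G_2$, one for each colour class. The path $u,a_A,\dots,a_1,v$ has even length $|\alpha|-1$, so $u$ and $v$ lie in the same class. Say $u,v\in V(G_1)$, so that $a_1,a_A,b_1,b_B\in V(G_2)$.

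Next I compare $E(\fcg(G))$ with $E(\fcg(G'))$. Both are determined face by face, and every face other than $\alpha,\beta,\rho$ is common to $G$ and $G'$. The only pairs at distance two along $\rho$ that are not at distance two along $\alpha$ or $\beta$ are $a_Ab_B$ (through $u$) and $a_1b_1$ (through $v$). These lie inside $G_2$, so they are the only edges possibly lost. The pairs at distance two along $\alpha$ or $\beta$ but not along $\rho$ are exactly $ua_1$, $va_A$, $ub_1$, $vb_B$ (if $\alpha$ or $\beta$ is a triangle, some of these coincide). Hence $\fcg(G)$ is obtained from $G_1\,\dot\cup\,G_2$ by possibly deleting some edges inside $G_2$ and adding edges that all have an endpoint in $\{u,v\}$.

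It follows that in $\fcg(G)-u-v$ there is no edge between $V(G_1)\setminus\{u,v\}$ and $V(G_2)$. Therefore $\{u,v\}$ is a $2$-cut, provided $V(G_1)\setminus\{u,v\}\neq\emptyset$. This holds because $G_1$ contains, for instance, the vertices at distance two from $u$ along faces other than $\alpha,\beta$. Those vertices are distinct from $v$, since faces of a polyhedron meet in at most an edge. Also $V(G_2)\neq\emptyset$. Hence $\fcg(G)$ is not $3$-connected.

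The main obstacle is the bookkeeping in the comparison step. I need to make sure that no other pair of $G_1\times G_2$ becomes adjacent in $\fcg(G)$. I also need the degenerate cases, where $\alpha$ or $\beta$ is a triangle or $G'$ is small, not to break the application of Remark \ref{rem:bip}. I would handle both by the explicit face-by-face description of $\fcg$, which is unambiguous for polyhedra and for the plane $2$-connected graph $G'$ with its inherited immersion.
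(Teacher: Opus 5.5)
Your proposal is correct and is essentially the paper's proof: delete the common edge of the two odd faces, use Remark \ref{rem:bip} to split $\fcg(G-uv)$ along the colour classes, note that $u,v$ lie in the same class and that every edge of $\fcg(G)$ not already in $\fcg(G-uv)$ joins $\{u,v\}$ to the other class, so $\{u,v\}$ separates $\fcg(G)$. Your bookkeeping is slightly more careful than the paper's, which describes $\fcg(G)$ as obtained from $\fcg(G-uv)$ purely by adding edges and overlooks that $a_1b_1$ and $a_Ab_B$ may disappear when $\deg_G(v)\geq 4$ or $\deg_G(u)\geq 4$; as you note, losing edges does not affect the conclusion.
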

\begin{proof}
	Call $\varphi_1,\varphi_2$ the only two odd faces of $G$, and $yz$ the edge belonging to both. We define
	\begin{equation}
		\label{eq:gp}
		G'=G-yz.
	\end{equation}
	Hence $G'$ is a plane, $2$-connected, bipartite graph, different from $K_{2,\ell}$ for every $\ell\geq 2$. Recalling Remark \ref{rem:bip}, $\fcg(G')$ has exactly two connected components $G_1',G_2'$, both of which are planar and $2$-connected. We also note that $y,z$ belong to the same component of $\fcg(G')$, say $G_1'$. Now $\fcg(G)$ is obtained from $\fcg(G')$ by adding the edges
	\[yz_1,\ yz_2,\ zy_1,\ zy_2\]
	where for $i=1,2$ the vertices $y_i,y,z,z_i$ are consecutive along $\varphi_i$ (Figure \ref{f:yz}), with $y_i=z_i$ if and only if $\varphi_i$ is a triangular face. Note that $y_1,y_2,z_1,z_2\in V(G_2')$. Hence the graph
	\[\fcg(G)-y-z\]
	is disconnected, so that $\fcg(G)$ is not $3$-connected.
	\begin{figure}[ht]
		\centering
			\includegraphics[width=3.cm]{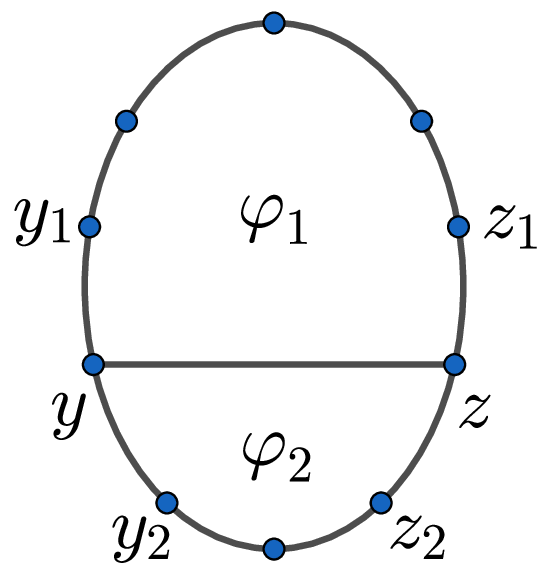}
		\caption{Proposition \ref{p:K24}.}
		\label{f:yz}
	\end{figure}
	
\end{proof}

\paragraph{Closing remarks and future work.} Here we have completely classified the polyhedra $G$ such that $\cg(G)$ is planar (Theorem \ref{thm:0}). Several intriguing questions arise naturally. The first concerns the classification of all graphs $G$ such that $\cg(G)$ is planar/polyhedral. The second concerns the classification of polyhedra and more generally, plane graphs $G$ such that $\fcg(G)$ is planar/polyhedral.
\\
In Theorem \ref{thm:maxpl}, we have characterised and constructed the polyhedra $G$ with $\Delta(G)\leq 4$ such that $\fcg(G)$ is maximal planar. We have also shown in Proposition \ref{p:3456} that if a polyhedron $G$ is such that $\fcg(G)$ is maximal planar, then $\Delta(G)\leq 6$. One would like to classify the remaining polyhedra with facecongraph being maximal planar.
\\
Another interesting question is to classify the plane graphs $G\not\simeq G'$ such that $\fcg(G)\simeq\fcg(G')$.
\\
In this paper, we have introduced the concept of facial common neighbourhood graph. If $G$ is a polyhedron, then $\fcg(G)$ is an intrinsic property of $G$. It would be interesting to employ $\fcg(G)$ to study the fine geometric structure of $G$.

\paragraph{Acknowledgements.}
Riccardo W. Maffucci was partially supported by Programme for Young Researchers `Rita Levi Montalcini' PGR21DPCWZ \textit{Discrete and Probabilistic Methods in Mathematics with Applications}, awarded to Riccardo W. Maffucci.

\clearpage
\addcontentsline{toc}{section}{References}
\bibliographystyle{abbrv}
\bibliography{allgra}

@article{radste,
  title={Vorlesungen {\"u}ber die {T}heorie der {P}olyeder. {S}pringer 1934},
  author={Steinitz, E and Rademacher, H},
  publisher={},
  year={}
}

@article{whit32,
	title={Congruent Graphs and the Connectivity of Graphs},
	author={Whitney, Hassler},
	journal={American Journal of Mathematics},
	volume={54},
	number={1},
	pages={150--168},
	year={1932},
	publisher={JSTOR}
}

@article{tutt61,
	title={A theory of 3-connected graphs},
	author={Tutte, William Thomas},
	journal={Indag. Math},
	volume={23},
	number={441-455},
	pages={8},
	year={1961}
}

@article{dieste,
	title={Graph theory 3rd ed},
	author={Diestel, Reinhard},
	journal={Graduate texts in mathematics},
	volume={173},
	year={2005}
}

@article{brin05,
	title = {Generation of simple quadrangulations of the sphere},
	volume = {305},
	issn = {0012365X},
	url = {https://linkinghub.elsevier.com/retrieve/pii/S0012365X05005170},
	doi = {10.1016/j.disc.2005.10.005},
	language = {en},
	number = {1-3},
	urldate = {2022-10-19},
	journal = {Discrete Mathematics},
	author = {Brinkmann, Gunnar and Greenberg, Sam and Greenhill, Catherine and McKay, Brendan D. and Thomas, Robin and Wollan, Paul},
	month = dec,
	year = {2005},
	pages = {33--54},
}

@article{sonntag2012iterated,
  title={Iterated neighborhood graphs},
  author={Sonntag, Martin and Teichert, Hanns-Martin},
  journal={Discussiones Mathematicae Graph Theory},
  volume={32},
  number={3},
  pages={403--417},
  year={2012},
  publisher={Uniwersytet Zielonog{\'o}rski. Wydzia{\l} Matematyki, Informatyki i Ekonometrii}
}

@article{alwardi2012common,
  title={The common neighborhood graph and its energy},
  author={Alwardi, Anwar and Arsic, Branko and Gutman, Ivan and Soner, Nandappa D},
  journal={Iranian Journal of Mathematical Sciences and Informatics},
  volume={7},
  number={2},
  pages={1--8},
  year={2012},
  publisher={Iranian Journal of Mathematical Sciences and Informatics}
}

@article{schweitzer2013iterated,
  title={Iterated open neighborhood graphs and generalizations},
  author={Schweitzer, Pascal},
  journal={Discrete Applied Mathematics},
  volume={161},
  number={10-11},
  pages={1598--1609},
  year={2013},
  publisher={Elsevier}
}

@article{knor2014wiener,
  title={On {W}iener index of common neighborhood graphs},
  author={Knor, Martin and Luzar, Borut and Skrekovski, Riste and Gutman, Ivan},
  journal={MATCH Commun. Math. Comput. Chem},
  volume={72},
  number={1},
  pages={321--332},
  year={2014}
}

@inproceedings{tian2020exploiting,
  title={Exploiting common neighbor graph for link prediction},
  author={Tian, Hao and Zafarani, Reza},
  booktitle={Proceedings of the 29th ACM International Conference on Information \& Knowledge Management},
  pages={3333--3336},
  year={2020}
}

@article{maffucci2023smallest,
  title={On smallest 3-polytopes of given graph radius},
  author={Maffucci, Riccardo W and Willems, Niels},
  journal={Discrete Mathematics},
  volume={346},
  number={5},
  pages={113322},
  year={2023},
  publisher={Elsevier}
}

@article{maffucci2024classification,
  title={Classification and construction of planar, 3-connected {K}ronecker products},
  author={Maffucci, Riccardo W},
  journal={arXiv:2402.01407},
  year={}
}

@article{gaspoz2024independence,
  title={Independence numbers of polyhedral graphs},
  author={Gaspoz, S{\'e}bastien and Maffucci, Riccardo W},
  journal={Applied Mathematics and Computation},
  volume={462},
  pages={128349},
  year={2024},
  publisher={Elsevier}
}

@article{hollowbread2025generation,
  title={Generation of 3-connected, planar line graphs},
  author={Hollowbread-Smith, Phoebe and Maffucci, Riccardo W},
  journal={Discrete Mathematics},
  volume={348},
  number={2},
  pages={114302},
  year={2025},
  publisher={Elsevier}
}

@article{maffucci2025classification,
  title={Classification of polyhedral graphs by numbers of common neighbours},
  author={Maffucci, Riccardo W},
  journal={arXiv:2508.01349},
  year={}
}

@article{maffucci2025common,
  title={Common neighbours in planar graphs},
  author={Maffucci, Riccardo W},
  journal={arXiv:2511.19251},
  year={}
}

@article{maffucci2025deza,
  title={Deza graphs and regular polyhedra},
  author={Maffucci, Riccardo W},
  journal={The Art of Discrete and Applied Mathematics},
  year={}
}

@article{maffucci2025regularity,
  title={Regularity and separation for {S}ierpi\'nski products of graphs},
  author={Maffucci, Riccardo W},
  journal={arXiv:2506.16864},
  year={}
}
\end{document}